\documentclass[a4paper,oneside,11pt]{article}
\usepackage{amsmath}
\usepackage{amsfonts}
\usepackage{amssymb}
\usepackage{titlesec}
\usepackage{graphicx}
\usepackage{subfigure}
\usepackage{soul}
\usepackage{fancyhdr}
\usepackage{float}
\usepackage{pdfsync}
\usepackage{latexsym}
\usepackage{mathrsfs}
\usepackage{mathtools}
\usepackage{amsthm}
\usepackage{wasysym}
\usepackage{cite}
\usepackage{color}
\usepackage{geometry}
\usepackage{extarrows}
\usepackage{enumerate}
\usepackage{dsfont}
\usepackage{bm}
\usepackage{enumitem}
\DeclareMathOperator{\esssup}{esssup}
\usepackage[marginal]{footmisc}
\usepackage[dvips]{epsfig}
\usepackage{amscd}
\usepackage[bookmarks,colorlinks,citecolor=blue,linkcolor=red]{hyperref}
\allowdisplaybreaks

\numberwithin{equation}{section}
\newtheorem{definition}{Definition}[section]
\newtheorem{theorem}{Theorem}[section]

\newtheorem{lemma}{Lemma}[section]
\newtheorem{remark}{Remark}[section]
\newtheorem{proposition}{Proposition}[section]
\renewcommand{\thefootnote}{}

\title{A Regularized Framework and Admissible Solutions for Liquid-Vapor Phase Transitions in Steady Compressible Flows }
\date{  }
\author{Yazhou Chen$^1$, Qiaolin He$^2$, Dongjuan Niu$^{3}$, Yi Peng$^1$, Xiaoding Shi$^{1*}$ \\[3mm]
\scriptsize$^{1}$ {College of Mathematics and Physics, Beijing University of
Chemical Technology, Beijing 100029, China}\\
\scriptsize$^2$ {School of Mathematical Science, Sichuan University School, Beijing 100048, China }\\
\scriptsize$^3$ {School of Mathematical Science, Capital Normal University School, Beijing 100048, China }
}

\begin{document}
\maketitle
\renewcommand{\thefootnote}{\fnsymbol{footnote}}
\footnotetext[1]{{Corresponding author. }\\
{Email: chenyz@mail.buct.edu.cn (Y.Chen), qlhejenny@scu.edu.cn (Q.He), djniu@cnu.edu.cn,(D.Niu), 2024500077@buct.edu.cn (Y.Peng), shixd@mail.buct.edu.cn (X.Shi)}}

\begin{abstract}
We investigate the well-posedness of the periodic boundary value problem for the steady compressible isentropic Navier-Stokes system under the van der Waals equation of state. The main difficulty arises from the non-monotonicity of the pressure, which induces liquid-vapor phase transitions and consequently leads to both physical instabilities and mathematical non-uniqueness of solutions. It is shown that the occurrence of a phase transition is determined by whether the integral average of the specific volume lies inside the gas-liquid coexistence region defined by the Maxwell construction. By introducing an artificial viscosity, we construct an approximate system. When the integral average of the specific volume falls within the Maxwell region, the approximate solution converges, as the artificial viscosity tends to zero, to the equilibrium states given by Maxwell’s construction, with the diffuse interface sharpening into a discontinuity. Conversely, if the integral average of the specific volume lies outside this region, the limiting solution remains outside as well, meaning that no phase transition occurs. These results demonstrate that the non-monotonicity of the pressure, combined with the condition that the integral average of the specific volume belongs to the Maxwell region, can act as a nucleation mechanism for phase transitions in the isentropic gas-liquid problem. Furthermore, the proposed approximation not only offers a regularized framework for describing phase transitions but also provides, from a rigorous mathematical viewpoint, a definition of admissible solutions related to phase transitions. The detailed proof relies on the artificial viscosity method, the calculus of variations, the anti-derivative technique, phase-plane analysis, and the level-set method.
\end{abstract}

\noindent{\bf Keywords:}   Compressible Navier-Stokes system, van der Waals  equation of state,  Steady-state Solutions, Phase Transition, Well-Posedness

\

\noindent{\bf AMS subject classifications:} 35Q35; 35B65; 76N10; 35M10; 35B40; 35C20; 76T30

\section{Introduction and Main Results}\label{sec:int}

 \ \ \ \ The ideal gas equation of state 
 $pv=R\theta$, relates pressure $p$  molar volume $v$, represents the molar and temperature $\theta$, with $R>0$ being the gas constant. This model rests on two key assumptions: gas molecules are volumeless point particles with no intermolecular forces. Consequently, all collisions are perfectly elastic, conserving kinetic energy. In contrast, the van der Waals equation introduces two corrections to account for real gas behavior: first, a co-volume term (or excluded volume) compensates for the finite size of molecules, reducing the available volume for motion. Second, a pressure reduction term (or mean-field attraction) accounts for the weak attractive forces between molecules, which lowers the observed pressure. By incorporating these molecular-scale features, the van der Waals equation, named after the 1910 Nobel Laureate in physics (Van der Waals \cite{W-1894}), offers a more accurate and physically realistic description of non-ideal gases.
To the best of our knowledge (see \cite{W-1894}, \cite{HW-1997}, \cite{HK-1993}, \cite{MLW-2007-1}, \cite{MLW-2007-2}, \cite{HS-2021} and references therein),
 the original van der Waals equation is given by
\begin{equation}\label{van der Waals equation}
 (p+\frac{a}{v^2})(v-b)=R\theta,
\end{equation}
where   $a>0$ and $b>0$ are constants characterizing the molecular cohesive force and the finite molecular size, respectively. Physically, $b$ represents the excluded volume due to the molecules themselves.

This paper examines flow dynamics in liquid-vapor phase transitions. It is known that such two-phase systems can be described by a single-fluid Navier-Stokes model, where both phases are treated as a unified continuum. Within this framework, the density field acts as an effective phase-field variable, distinguishing vapor (low-density regions) from liquid (high-density regions). Phase transitions are represented through the van der Waals equation of state \eqref{van der Waals equation}, originally proposed by van der Waals \cite{W-1894}. The motion of an isothermal compressible van der Waals fluid is governed by the following Navier-Stokes system:
\begin{equation}\label{original NS}
\left\{\begin{array}{llll}
\displaystyle \rho_{t}+\textrm{div}(\rho \mathbf{u})=0,\\
\displaystyle (\rho \mathbf{u})_{t}+\mathrm{div}\big(\rho \mathbf{u}\otimes\mathbf{u}\big)-\nu_1\Delta\mathbf{u}-\big(\nu_1+\nu_2\big)\nabla\mathrm{div}\mathbf{u}+\nabla p=0.
\end{array}\right.
\end{equation}
Here, $\rho$ denotes the mass density, $\mathbf{u}$ the flow velocity, and the pressure $p$ is related to the specific volume $v$ via the equation of state \eqref{van der Waals equation} with $\rho = 1/v$.
The operators $\nabla$, $\operatorname{div}$, and $\Delta$ represent the gradient, divergence, and Laplace operators, respectively, while the subscript $t$ indicates the partial derivative with respect to time.
The parameters $\nu_1 > 0$ and $\nu_2 > 0$ correspond to the shear and bulk viscosity coefficients. $p$ represents pressure and it satisfies the van der Waals state equation \eqref{van der Waals equation}.

To analyze the well-posedness and other aspects of the system \eqref{original NS}, we begin by examining the van der Waals equation of state \eqref{van der Waals equation} and presenting its fundamental properties.
According to the van der Waals equation \eqref{van der Waals equation}, for temperatures in the range $0 < \theta < \theta_c$, the corresponding isotherm on the $p$--$v$ diagram exhibits a non-monotonic segment characterized by a local maximum and a minimum. This ``hump'' signifies a region of mechanical instability where pressure increases with volume. This is precisely the region where liquid-vapor phase transition occurs, as illustrated in Figure 1. A straightforward calculation yields the critical temperature,
\begin{equation}\label{critical temperature}
    \theta_c = \frac{8a}{27Rb},
\end{equation}
which serves as the upper bound for such unstable behavior. Throughout this work, we operate under the isothermal assumption with $\theta < \theta_c$, a necessary condition for phase change.

\begin{lemma}\label{properties of pressure}
\textbf{(Properties of the pressure function.)} Consider a subcritical temperature $\theta$, satisfying
\begin{equation}\label{Critical temperature condition}
0 < \theta < \theta_c.
\end{equation}
Under this condition, the pressure function $p(v)$ given by \eqref{van der Waals equation} exhibits the characteristic form shown in Figure 1 below:
\begin{enumerate}[label=(\roman*)]
    \item $\displaystyle \lim_{v \to +\infty} p(v) = 0^+$;

    \item The function $p(v)$ is non-monotone and possesses exactly two critical points $\beta > \alpha > b$ such that:
    \begin{itemize}
        \item $p'(\alpha) = p'(\beta) = 0$,
        \item $\alpha$ is the point of local minimum, $\beta$ is the point of local maximum,
        \item $p(v)$ is strictly decreasing on $(b, \alpha)$ and $(\beta, +\infty)$, and strictly increasing on $(\alpha, \beta)$;
    \end{itemize}

    \item There exist unique points $\alpha_0 \in (b, \alpha)$ and $\beta_0 \in (\beta, \infty)$ such that the \textbf{Maxwell construction} (i.e. Maxwell equal area rule) \cite{M1875} holds:
    \begin{equation}\label{Maxwell equal area rule}
        \int_{\alpha_0}^{\beta_0} \big[ p(v) - p(\alpha_0) \big] \, dv = 0;
    \end{equation}
    Graphically, this corresponds to the horizontal chord $BC$ in Fig.1 for which the areas $S_{CGF}$ and $S_{GEB}$ are equal, ensuring thermodynamic equilibrium. From a physical perspective, the interval  $(\alpha_0,\beta_0)$ is known as the coexistence region, commonly referred to as the \textbf{Maxwell region}.
     
   \item There exist unique points $\bar\alpha \in (b, \alpha_0)$ and $\bar\beta \in (\beta_0, \infty)$ such that the 
      \begin{equation*}
   p(\bar\beta)=p(\alpha)\xlongequal{\mathrm{def}}\underline\sigma,\ \ p(\bar\alpha)=p(\beta)\xlongequal{\mathrm{def}}\overline\sigma;
   \end{equation*}
   
   \item Given any $\sigma \in (\underline{\sigma}, \overline{\sigma})$, the equation $p(v) = \sigma$ has precisely three distinct solutions, denoted by $\alpha_\sigma$, $\xi_\sigma$ and $\beta_\sigma$, satisfying
\[
p(\alpha_\sigma) = p(\xi_\sigma) = p(\beta_\sigma) = \sigma.
\]
\end{enumerate}
\end{lemma}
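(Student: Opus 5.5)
We work with the explicit form $p(v)=\frac{R\theta}{v-b}-\frac{a}{v^2}$ on $(b,\infty)$. Item (i) is immediate: both terms tend to $0$, and for large $v$ we have $R\theta v^2>a(v-b)$, so $p>0$ eventually. For (ii) we compute
\[
p'(v)=-\frac{R\theta}{(v-b)^2}+\frac{2a}{v^3}=\frac{2a}{v^3(v-b)^2}\Big((v-b)^2-\frac{R\theta}{2a}v^3\Big).
\]
The sign of $p'$ is therefore the sign of $g(v)=\frac{2a}{R\theta}-\frac{v^3}{(v-b)^2}$. Set $h(v)=v^3/(v-b)^2$. Then $h'(v)=v^2(v-3b)/(v-b)^3$, so $h$ decreases from $+\infty$ on $(b,3b)$, attains its minimum $h(3b)=27b/4$, and increases to $+\infty$ afterwards. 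Since $\theta<\theta_c=8a/(27Rb)$ is exactly the condition $2a/(R\theta)>27b/4$, the equation $h(v)=2a/(R\theta)$ has exactly two roots $\alpha<3b<\beta$. We have $p'<0$ where $h>2a/(R\theta)$, namely on $(b,\alpha)\cup(\beta,\infty)$, and $p'>0$ on $(\alpha,\beta)$. This gives (ii), together with $p(b^+)=+\infty$.

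For (iv) and (v) we use monotonicity and the intermediate value theorem on each branch. On $(\beta,\infty)$, $p$ decreases from $\overline\sigma=p(\beta)$ to $0^+$. Since $p$ increases on $(\alpha,\beta)$ and $p>0$ eventually, we get $\underline\sigma=p(\alpha)<\overline\sigma$. We must also check $\underline\sigma>0$, or else handle it: if $\underline\sigma\le 0$, then $\bar\beta$ with $p(\bar\beta)=\underline\sigma$ fails to exist. Existence of $\bar\beta$ as stated thus requires $p(\alpha)>0$, and I would flag that this may need the additional restriction $\theta$ close to $\theta_c$, or else interpret the statement for $\sigma$ in the range of the outer branch. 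Granting this, $\bar\beta$ is unique on $(\beta,\infty)$. On $(b,\alpha)$, $p$ decreases from $+\infty$ to $\underline\sigma$, which gives a unique $\bar\alpha$ with $p(\bar\alpha)=\overline\sigma$. Then for any $\sigma\in(\underline\sigma,\overline\sigma)$, each of the three monotone branches meets the level $\sigma$ exactly once, which gives $\alpha_\sigma<\xi_\sigma<\beta_\sigma$. The inclusions $\bar\alpha<\alpha_0$ and $\beta_0<\bar\beta$ follow from (iii) once we know $p(\alpha_0)\in(\underline\sigma,\overline\sigma)$, since $p$ is decreasing on the outer branches.

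The main step is (iii). We parametrize by $\sigma\in[\underline\sigma,\overline\sigma]$ and define
\[
F(\sigma)=\int_{\alpha_\sigma}^{\beta_\sigma}(p(v)-\sigma)\,dv,
\]
where $\alpha_\sigma,\beta_\sigma$ are the outer roots and are continuous in $\sigma$. Because $p(\alpha_\sigma)=p(\beta_\sigma)=\sigma$, the boundary terms vanish when we differentiate, and
\[
F'(\sigma)=-(\beta_\sigma-\alpha_\sigma)<0,
\]
so $F$ is strictly decreasing. At the endpoints:
\begin{itemize}
\item At $\sigma=\underline\sigma$ we have $\alpha_\sigma=\alpha$ and $\beta_\sigma=\bar\beta$. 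On $(\alpha,\bar\beta)$, $p\ge\underline\sigma$ with strict inequality on the interior, so $F(\underline\sigma)>0$.
\item At $\sigma=\overline\sigma$ we have $\alpha_\sigma=\bar\alpha$ and $\beta_\sigma=\beta$. On $(\bar\alpha,\beta)$, $p\le\overline\sigma$ with strict inequality on the interior, so $F(\overline\sigma)<0$.
\end{itemize}
Hence there is a unique $\sigma^*\in(\underline\sigma,\overline\sigma)$ with $F(\sigma^*)=0$, and we set $\alpha_0=\alpha_{\sigma^*}$, $\beta_0=\beta_{\sigma^*}$. These satisfy $\alpha_0\in(\bar\alpha,\alpha)$ and $\beta_0\in(\beta,\bar\beta)$.

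Uniqueness among all pairs $\alpha_0\in(b,\alpha)$, $\beta_0\in(\beta,\infty)$ requires reading (iii) as the standard pair of conditions $p(\alpha_0)=p(\beta_0)$ plus equal area. With only the single integral condition, the pair is not unique. I would make this interpretation explicit; it matches the horizontal chord $BC$ in the figure.

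The expected obstacles are therefore twofold. The first is this precise formulation of uniqueness in (iii). The second is the positivity of $\underline\sigma$ needed in (iv). For the latter I would first try to verify $p(\alpha)>0$ directly from $h(\alpha)=2a/(R\theta)$, which gives $p(\alpha)=\frac{a}{\alpha^2}\left(\frac{2(\alpha-b)}{\alpha}-1\right)$. This is positive if and only if $\alpha>2b$. That holds only for $\theta$ above a threshold, namely $h(2b)=8b>2a/(R\theta)$, so in general an extra assumption is needed.
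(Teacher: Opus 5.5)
Your argument is correct. The paper gives no proof of this lemma at all; it is asserted by appeal to Figure~1, so your proposal supplies something the paper lacks.

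\textbf{Relation to the paper.} Your treatment of (i)--(ii) through $h(v)=v^3/(v-b)^2$ is sound. Your function $F(\sigma)$ is exactly $\big(W(\alpha_\sigma)+\sigma\alpha_\sigma\big)-\big(W(\beta_\sigma)+\sigma\beta_\sigma\big)$, whose derivative $-(\beta_\sigma-\alpha_\sigma)$ the paper computes in the proof of Lemma~\ref{properties of w}(iii). The paper uses that identity only to order the two local minima once $\sigma_0$ is given. Your endpoint signs $F(\underline\sigma)>0>F(\overline\sigma)$ are what actually give existence and uniqueness of the Maxwell level, which the paper never proves.

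\textbf{Your two caveats are genuine.} Both are defects of the statement, not of your proof. By your formula for $p(\alpha)$, $\underline\sigma>0$ if and only if $\theta>\frac{a}{4Rb}=\frac{27}{32}\theta_c$.
\begin{itemize}
\item For $0<\theta\le\frac{27}{32}\theta_c$, item (iv) is false, since $\bar\beta$ does not exist.
\item For $\theta<\frac{27}{32}\theta_c$, item (v) is also false. Because $p((\beta,\infty))=(0,\overline\sigma)$, every level $\sigma\in(\underline\sigma,0]$ has only two preimages.
\end{itemize}
So (iv)--(v), and the later analysis on $(\underline\sigma,\overline\sigma)$ in Lemma~\ref{properties of w}, need the stronger hypothesis $\frac{27}{32}\theta_c<\theta<\theta_c$. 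Likewise, uniqueness in (iii) holds only with the extra condition $p(\alpha_0)=p(\beta_0)$. That is how the paper itself uses the Maxwell pair in \eqref{Maxwell condition}.

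\textbf{One tightening in (iii).} As written, your proof of (iii) passes through $\bar\beta=\beta_{\underline\sigma}$, so it inherits the positivity assumption. Yet (iii) holds for every $\theta<\theta_c$. When $\underline\sigma\le 0$, run the same argument on $(0,\overline\sigma]$; this interval contains every possible common level, since $p((\beta,\infty))=(0,\overline\sigma)$. The derivative formula is unchanged. As $\sigma\to 0^+$:
\begin{itemize}
\item $\beta_\sigma\to\infty$ and $\sigma\beta_\sigma\to R\theta$;
\item $\alpha_\sigma$ stays in $[\bar\alpha,\alpha]$;
\item $\int_{\alpha_\sigma}^{\beta_\sigma}p\,dv=R\theta\ln\frac{\beta_\sigma-b}{\alpha_\sigma-b}+a\big(\frac{1}{\beta_\sigma}-\frac{1}{\alpha_\sigma}\big)\to+\infty$.
\end{itemize}
Hence $F(0^+)=+\infty$, and strict monotonicity again gives a unique $\sigma_0\in(0,\overline\sigma)$ with $\alpha_0=\alpha_{\sigma_0}\in(\bar\alpha,\alpha)$ and $\beta_0=\beta_{\sigma_0}\in(\beta,\infty)$.
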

 
\

\begin{remark}\label{Maxwell}
\textbf{(Maxwell region.)}
Within the theoretical framework of thermodynamic phase transitions, the Maxwell equal-area rule \cite{M1875} prescribes the conditions for phase coexistence. Its fundamental principle is to enforce equality of pressure between two coexisting phases by requiring that the net work done in a hypothetical cycle along an isotherm vanishes. Mathematically, this is expressed by equating the area under the $p$--$v$ isotherm to the area of the rectangle under the constant-pressure tie-line, which yields the condition for $\alpha_0$ and $\beta_0$ (introduced in Lemma \ref{properties of pressure}):
\begin{equation}\label{Maxwell equal area}
\int_{\alpha_0}^{\beta_0} p(v, \theta)  dv = p(\alpha_0, \theta)(\beta_0 - \alpha_0), \qquad \text{for fixed } 0 < \theta < \theta_c.
\end{equation}
Geometrically, this implies the equality of the areas above and below the horizontal segment $BC$ (corresponding to $S_{CGF}$ and $S_{GEB}$ in Fig.~1), thereby ensuring thermodynamic consistency. Consequently, we define the \textbf{Maxwell region}, which will be used throughout this paper, as
\begin{equation}\label{M-R}
\Omega_{\mathrm{Maxwell}} =(\alpha_0,\beta_0).
\end{equation}
\end{remark}

\

\begin{figure}
\centering
\includegraphics[width=4.5in,height=3in]{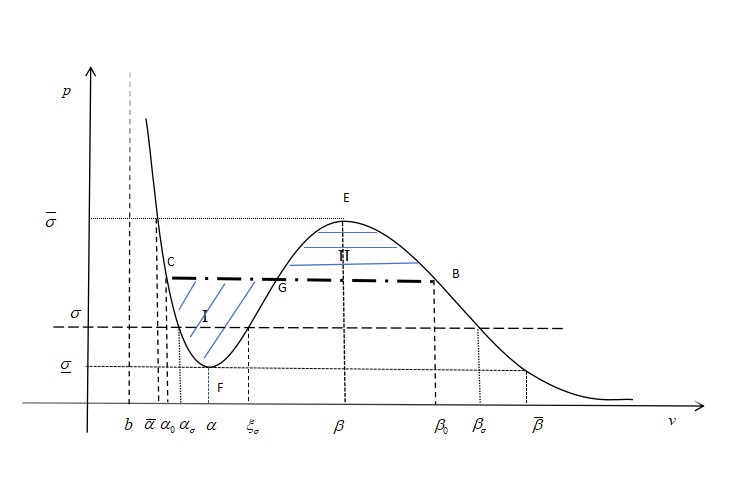}
\caption{Figure of the pressure }
\end{figure}

\

Let us now review the existing results concerning issues related to the Navier-Stokes equations \eqref{original NS}.
For the compressible isentropic (or isothermal) Navier-Stokes equations governed by the ideal gas equation of state $pv = R\theta$, problems such as the well-posedness, long-time behavior, and singular limits of their solutions have been extensively investigated. Seminal contributions include the existence of strong solutions for small initial perturbations by Matsumura-Nishida \cite{MN1980}, the framework of renormalized weak solutions by Lions  \cite{Lions1998}, and the existence theory for solutions with small energy but possibly large oscillations by Huang-Li-Xin \cite{HLX2012}. For the one-dimensional non-isentropic case, the global existence of solutions without smallness assumptions on the initial data was successively established by  Kanel \cite{K1979} and  Kazhikhov \cite{K1981}, further related developments can be found in Jiang \cite{J1999} and  Li-Liang \cite{LL2016} among others. Regarding the asymptotic behavior of solutions, seminal contributions include the stability of rarefaction and shock waves established by Matsumura-Nishihara \cite{MN1985, MN1986}, and the stability of contact discontinuities proved by Huang-Matsumura-Shi \cite{HMS-2004} and Huang-Li-Matsumura \cite{HLM2010}. Furthermore, the stability and viscosity limit of interacting shock waves were investigated by Huang-Wang-Wang-Yang \cite{HWWY2015} and Shi-Yong-Zhang \cite{SYZ2016}, respectively.

For fluids undergoing phase transitions, the pressure $p$ is typically a non-monotonic function of the density $\rho$, a characteristic of non-ideal equations of state. In the context of van der Waals fluid,  the phase transition solution was studied numerically by Affouf-Caflisch \cite{AC1991} for the Cauchy problem
of the isothermal fluid equations with a non-monotone equation of state (like that of van der Waals) and with viscosity and capillarity terms.   Hsieh-Wang \cite{HW-1997} employed a pseudo-spectral method with artificial viscosity, demonstrating numerically that the non-monotonicity of pressure induces phase transition. This study was later revisited by He-Liu-Shi \cite{HLS}, who applied a second-order TVD Runge-Kutta splitting scheme combined with a Jin-Xin relaxation scheme, a methodology also extended to the compressible Navier-Stokes/Cahn-Hilliard system and Navier-Stokes/Allen-Cahn system in He-Shi \cite{HS-2020} and \cite{HS-2021} respectively.  Eden-Milani-Nicolaenko \cite{EMN1993} applied semigroup methods to the periodic van der Waals Navier-Stokes equations with a Korteweg capillary term. They demonstrated that for sufficiently small initial data, the trivial steady-state solution is weakly stable, and established the existence of local, finite-dimensional exponential attractors. Separately, Zumbrun \cite{Z2000} analyzed the Cauchy problem for the viscous-capillary p-system and obtained the linear orbital stability of its traveling wave-front solutions. Hsiao \cite{H-1990} constructs admissible shock and wave curves for the one-dimensional inviscid Riemann problem, and then builds a wave fan, composed of shocks and rarefaction waves, that matches the given initial data.
For dynamic elastic bar theory, where the stress-deformation relation is similarly non-monotone (e.g., $p(\rho)=\rho^{-3}-\rho^{-1}$), Mei-Liu-Wong \cite{MLW-2007-1, MLW-2007-2, MWL-2008} established the existence of a global strong solution to the one-dimensional Navier-Stokes system with additional artificial viscosity. In a related setting, Hoff and Khodia \cite{HK-1993} investigated the dynamic stability of certain steady-state weak solutions on the one-dimensional whole space under small initial disturbances.

It should be noted that in the context of phase transition problems, employing the ideal gas equation of state is physically incorrect. A van der Waals-type or other non-ideal equation of state must be used to construct a correct bulk free energy function and to provide a physically realistic description of pressure for the Navier-Stokes equations. The ideal gas model is applicable only to single-phase flows, either purely gaseous or purely liquid, and must be replaced whenever phase transition is involved.

The aforementioned studies, particularly computational findings of Hsieh-Wang \cite{HW-1997} etc., demonstrate that a compressible fluid described by the van der Waals equation of state consistently gives rise to complex propagating phase boundaries. In particular, stable liquid and gas regions can coexist with unstable regions undergoing phase transition decomposition, these unstable regions correspond to the monotonically increasing segment of the van der Waals isothermal curve on the pressure, specific volume diagram. 

This paper thus focuses on steady-state solutions for such phase transitions, motivated by two fundamental questions. First, among the piecewise smooth mathematical solutions, which ones represent physically phase transition solution with two interfaces? Second, how can a systematic framework be developed to derive these phase transition solution with two interfaces? The steady-state solution under discussion in this paper is a weak solution of system \eqref{original NS}. As such, it represents a phase boundary wave, which together with shock waves, rarefaction waves, and contact discontinuities, forms one of the fundamental wave types in the Euler equations with a van der Waals equation of state.

Owing to the complexity of the general problem, we restrict our analysis to the one-dimensional full-space setting. As a simplifying assumption, a $2L$-periodic boundary condition is adopted for system \eqref{original NS}, and the viscosity coefficient $\epsilon$ is taken to be $1$. In one spatial dimension, the system simplifies to the following form:
\begin{equation}\label{1-d NS}
\left\{\begin{array}{llll}
\displaystyle \rho_{{\tilde t}}+(\rho u)_{\tilde{x}}=0,& (\tilde{x},{\tilde t})\in\mathbb{R}\times\mathbb{R}_+,\\
\displaystyle (\rho u)_{{\tilde t}}+\big(\rho u^2 +p(\rho)\big)_{\tilde{x}}= u_{\tilde{x}\tilde{x}},& (\tilde{x},{\tilde t})\in\mathbb{R}\times\mathbb{R}_+,\\
\displaystyle (\rho,u)(\tilde{x},{\tilde t})=(\rho,u)(\tilde{x}+2L,{\tilde t}), & (\tilde{x},{\tilde t})\in\mathbb{R}\times\mathbb{R}_+,\\
\displaystyle (\rho,u)\big|_{t=0}=(\rho_0,u_0)(\tilde x),& \tilde{x}\in\mathbb{R},
\end{array}\right.
\end{equation}
where $\rho$ and $u$ are unknown functions representing the fluid's density and velocity, respectively, and $\cdot_{\tilde t}=\frac{\partial}{\partial {\tilde t}}$, $\cdot_{\tilde{x}}=\frac{\partial}{\partial {\tilde x}}$. $(\rho_0,u_0)$ are periodic on $[-L,L]$.

\begin{remark}\label{linearized analysis}
The computations (see Hsieh-Wang \cite{HW-1997}, He-Liu-Shi \cite{HLS} and references therein) reveal that even after incorporating an artificial viscosity term for density into Equation~(1), the solution still exhibits violent oscillations within the Maxwell region. To elucidate the fundamental mechanism behind this behavior, we conduct a linearized analysis. Specifically, by introducing a density-related artificial viscosity term $\varepsilon \rho_{xx}$ into system \eqref{1-d NS}, we obtain the following parabolic nonlinear system:
\begin{equation}\label{system with artificial viscosity}
\left\{\begin{array}{llll}
\displaystyle \rho_{t}-(\rho u)_{x}=\varepsilon\rho_{xx},& (x,t)\in\mathbb{R}\times\mathbb{R}_+,\\
\displaystyle  u_{t}+uu_x+\frac{1}{\rho}\frac{d p}{d\rho}\rho_x= u_{xx},  & (x,t)\in\mathbb{R}\times\mathbb{R}_+,
\end{array}\right.
\end{equation}which is amenable to linear stability analysis.
Consider a constant state $(\rho,u)=(\rho_0,0)$ located in the region where the pressure is monotonically decreasing, i.e., $p'(\rho_0)<0$. Linearizing system \eqref{system with artificial viscosity} around this state yields
\begin{equation}\label{linearize Riemann Problem}
\begin{cases}
\displaystyle \rho_{t}-\rho_0 u_{x}=\varepsilon\rho_{xx},\\
\displaystyle u_{t}+\frac{1}{\rho_0}p'(\rho_0)\rho_x= u_{xx}.
\end{cases}
\end{equation}
Seeking solutions of the form $(\rho,u)=(a(t),b(t))e^{inx}$ and substituting into \eqref{linearize Riemann Problem} leads to
\begin{equation}\label{a-b}
\frac{d}{dt}\binom{a}{b}=A\binom{a}{b},\quad {\ \textrm{where}\ } A=\left(\begin{array}{cc}
     -\varepsilon|n|^2 & in\rho_0 \\
     \frac{in}{\rho}p'(\rho_0) & -|n|^2 
   \end{array}\right).
\end{equation}
The eigenvalues of $A$ are $\lambda=-(\varepsilon+1)|n|^2\pm\sqrt{\big[(\varepsilon+1)|n|^2\big]^2-\big(4\varepsilon |n|^4+p'(\rho_0)n^2\big)}$.
Instability occurs when $\operatorname{Re}(\lambda)>0$, which is possible provided
$|n|<\sqrt{\frac{-p'(\rho_0)}{\varepsilon}}$.
This inequality defines the range of wave numbers for which linear instability arises.
\end{remark}

The above linearized analysis in Remark \ref{linearized analysis} suggests that, in the isentropic setting, the non-monotonicity of pressure may be the fundamental cause of phase transition. On the other hand, given the jump discontinuities inherent in phase transition images, this example demonstrates, both computationally and analytically, that incorporating an artificial viscosity term provides the most direct approach for constructing approximate solutions to phase transitions. To this end, we begin by reformulating the aforementioned equation within a Lagrangian coordinate system. Unlike the Eulerian framework, the Lagrangian description naturally decouples density and velocity, thereby facilitating a more straightforward analysis, as detailed below.
Without loss of generality, we  assume throughout this paper that
\begin{equation}\label{H1}
  Lm_0=1.
\end{equation}
Now we can introduce the Lagrangian mass coordinates, which are defined as follows:
\begin{equation}\label{Lagrange co-ordinates}
  x=\int_0^{\tilde x}\rho(\xi,\tilde{t})d\xi,\qquad t=\tilde t.
\end{equation}
It is readily verifiable that the map $(\tilde x, \tilde{t}) \rightarrow (x, t)$ is a $C^1$-bijective map, and through direct computation, we obtain
\begin{equation}\label{L-trans}
 \frac{\partial}{\partial \tilde t}=-\rho u\frac{\partial}{\partial x}+\frac{\partial}{\partial t},\quad \frac{\partial}{\partial \tilde x}=\rho\frac{\partial}{\partial x}.
\end{equation}
Introducing the specific volume $v = 1/\rho$, system \eqref{1-d NS} transforms into the following periodic boundary value problem:
 \begin{equation}\label{ns-lagrange}
\left\{\begin{array}{llll}
\displaystyle v_{t}-u_{x}=0,& (x,t)\in\mathbb{R}\times\mathbb{R}_+,\\
\displaystyle u_{t}+p_{x}(v)=\big(\frac{u_{x}}{v}\big)_{x},  & (x,t)\in\mathbb{R}\times\mathbb{R}_+,\\
\displaystyle (v,u)(x-1,t)=(v,u)(x+1,t), & (x,t)\in\mathbb{R}\times\mathbb{R}_+,\\
\displaystyle  (v,u)(x,0)=(v_0,u_0)(x), & x\in\mathbb{R}.
\end{array}\right.
\end{equation}
A necessary condition is that the initial data be periodic:
\begin{equation*}
  (v_0,u_0)(x-1)=(v_0,u_0)(x+1),\qquad x\in\mathbb{R}.
\end{equation*}

Given that the periodic solutions $(v, u)(x, t)$ of system \eqref{ns-lagrange} in 
$\mathbb{R}$ can be interpreted as $2$-periodic extensions of their restrictions to $[-1,1]$, we hereafter restrict our analysis of \eqref{ns-lagrange} to the bounded interval $[-1,1]$. By integrating \eqref{ns-lagrange}$_1$ and  \eqref{ns-lagrange}$_2$ over the domain $[-1,1]\times[0,t]$ respectively,  and applying the periodic boundary condition specified in \eqref{ns-lagrange}$_3$, we derive the following requirement:
\begin{equation}\label{Hypothesis-1}
  \int_{-1}^{1}v(x,t)dx=\int_{-1}^{1}v_0(x)dx\xlongequal{\mathrm{def}}2\bar v.
\end{equation}

Since the steady-state solution describes the long-time asymptotic behavior of the unsteady flow, we begin by analyzing the steady state of system \eqref{ns-lagrange} under constraint \eqref{Hypothesis-1}.
It is well known that this steady-state problem reduces to the following system of ordinary differential equations:
\begin{equation}\label{steady-state ns-lagrange}
\left\{\begin{array}{llll}
\displaystyle -u_{x}=0,& x\in [-1,1],\\
\displaystyle p_{x}(v)=\big(\frac{u_{x}}{v}\big)_{x}, & x\in [-1,1], \\
\displaystyle  (v,u)(-1)=(v,u)(1),
\end{array}\right.
\end{equation}
supplemented by the integral constraint
\begin{equation}\label{constraint for v,u}
\frac{1}{2}\int_{-1}^{1}v(x)dx = \bar v,
\end{equation}
where $\bar v$ denotes the  integrated average of specific volume over the period $[-1,1]$.
Substituting \eqref{steady-state ns-lagrange}$_1$ into \eqref{steady-state ns-lagrange}$_2$ immediately yields
\begin{equation}\label{phase transition equation}
 \left\{\begin{array}{llll}
\displaystyle - p_{x}(v)=0,& x\in [-1,1], \\
\displaystyle v(-1)=v(1). \ 
\end{array}\right.
\end{equation}

\

In view of the preceding discussion, \textbf{the main objectives of this paper}, established in Theorems \ref{Existence}--\ref{thm:approximation}, are summarised as follows:

\begin{enumerate}[label=(\Roman*)]
    \item  \textbf{Well-posedness} of piecewise smooth solutions of \eqref{phase transition equation} with the constraint \eqref{constraint for v,u};
     \item \textbf{Regularised framework} with a rigorous definition of admissible solutions to system \eqref{steady-state ns-lagrange} under constraint \eqref{constraint for v,u}.
    \item \textbf{Singular-limit behaviour} of an artificial-viscosity approximation and the role of the average specific volume $\bar v$ as defined in \eqref{constraint for v,u};  
    \item \textbf{Nucleation mechanism} and interface formation in isentropic liquid--vapor phase transitions;
   
\end{enumerate}

\

Focusing on the aforementioned objectives (I)-(IV), we now present our main theorems and the definition of admissible solutions for system \eqref{phase transition equation}.
Following Hsieh-Wang \cite{HW-1997}, we adopt the Maxwell equal area rule (see Remark \ref{Maxwell}) to partition the domain of $v\in(b, +\infty)$. While their work classifies the regions from a computational perspective into stable, semi-stable (metastable), and unstable parts, our analysis will more clearly demonstrate the inevitability of this classification. Accordingly, for a fixed $\bar{\theta} \in (0, \theta_c)$, we divide the interval $(b, +\infty)$ into two corresponding regions: $\Omega_{\text{unstable}}$, $\Omega_{\mathrm{metastable}}$ and $\Omega_{\text{stable}}$, referred to as the unstable, metastable and stable regions, respectively, as illustrated in Fig. 1.
\begin{equation}\label{ABC}
\left.\begin{array}{llll}
\displaystyle \Omega_{\mathrm{unstable}}\xlongequal{\mathrm{def}}(\alpha,\beta), & \displaystyle \mathrm{unstable \ region},\\
\displaystyle \Omega_{\mathrm{metastable}}\xlongequal{\mathrm{def}}(\alpha_0,\alpha]\cup[\beta,\beta_0),& \mathrm{metastable \ region},\\
\displaystyle  \Omega_{\mathrm{stable}}\xlongequal{\mathrm{def}}(b,\alpha_0]\cup[\beta_0,+\infty),& \displaystyle \mathrm{stable\ region}.
\end{array}\right.
\end{equation}

Based on the definition \eqref{ABC} and in light of Remark \ref{Maxwell}, it immediately follows that the Maxwell region is exactly the union of the unstable and semi-stable regions, i.e.,
\begin{equation}\label{Maxwell region and unstable metastable}
  \Omega_{\mathrm{Maxwell}}=(\alpha_0,\beta_0)=\Omega_{\mathrm{unstable}}\cup \Omega_{\mathrm{metastable}}.
\end{equation}
With regard to the weak solutions of system \eqref{phase transition equation}, the following result concerning the existence of piecewise smooth solutions  is established.

\

\begin{theorem}\label{Existence}
System \eqref{phase transition equation} admits a unique solution if and only if the  integrated average of specific volume $\bar v$ lies in the stable region $\Omega_{\mathrm{stable}}$; this solution is precisely the constant function $\bar v$ defined by \eqref{constraint for v,u}.
Conversely, multiple phase transition solutions exist if and only if  $\bar v$ belongs to the Maxwell region $\Omega_{\mathrm{maxwell}}$. In this case, the solutions are piecewise constant functions taking only the two values $\alpha_0$ and $\beta_0$, as prescribed by the Maxwell equal-area rule \eqref{Maxwell equal area rule}.
\end{theorem}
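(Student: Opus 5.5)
The statement is, as written, partly a matter of interpretation. Literally, $-p_x(v)=0$ forces only $p(v(x))\equiv\sigma$ for a constant $\sigma$. So the plan is to fix the class of ``piecewise smooth solutions'' and an admissibility/selection criterion, and then classify all solutions in that class. I would argue as follows.

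First, let $v$ be piecewise smooth, periodic, satisfy \eqref{phase transition equation} in the weak sense, and obey the constraint \eqref{constraint for v,u}. Then $p(v(x))=\sigma$ for all $x$, with $\sigma$ a constant. This holds because $p(v)$ has zero distributional derivative on $[-1,1]$, so it is constant even across jumps; this is the Rankine--Hugoniot condition for the steady problem with $u$ constant.

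By Lemma \ref{properties of pressure}(ii),(v), there are two cases.
\begin{itemize}
\item If $\sigma\notin[\underline\sigma,\overline\sigma]$, then $p(v)=\sigma$ has exactly one root. Hence $v$ is constant, and the constraint gives $v\equiv\bar v$.
\item If $\sigma\in(\underline\sigma,\overline\sigma)$, then $v$ takes values in $\{\alpha_\sigma,\xi_\sigma,\beta_\sigma\}$, with analogous statements at the endpoint values.
\end{itemize}
Existence of the constant solution $v\equiv\bar v$ is immediate for every $\bar v>b$. The content of the theorem is therefore which nonconstant solutions are regarded as genuine.

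To reduce the nonconstant ones to the two Maxwell values, I would impose the admissibility that the paper motivates. Values in the unstable region $(\alpha,\beta)$, namely $\xi_\sigma$, are excluded by the linear instability of Remark \ref{linearized analysis}. The pair $(\alpha_\sigma,\beta_\sigma)$ is then selected by the free-energy (variational) criterion. Concretely, I would minimize
\[
\int_{-1}^1 \Psi(v)\,dx \quad\text{subject to}\quad \tfrac12\int_{-1}^1 v\,dx=\bar v, \qquad \text{where } \Psi'(v)=-p(v).
\]
The Euler--Lagrange equation gives $p(v)=\sigma$, with $\sigma$ the Lagrange multiplier. Minimality against redistributing mass between two phases forces the common-tangent condition
\[
\Psi(\beta_\sigma)-\Psi(\alpha_\sigma)=-\sigma(\beta_\sigma-\alpha_\sigma),
\]
which is exactly \eqref{Maxwell equal area rule}. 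By the uniqueness in Lemma \ref{properties of pressure}(iii), this gives $\sigma=p(\alpha_0)$ and values $\{\alpha_0,\beta_0\}$.

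The dichotomy then follows from the volume constraint. Let $\lambda=|\{v=\alpha_0\}|/2\in[0,1]$. Then $\bar v=\lambda\alpha_0+(1-\lambda)\beta_0$, and a genuinely two-phase solution needs $\lambda\in(0,1)$, i.e. $\bar v\in(\alpha_0,\beta_0)=\Omega_{\mathrm{Maxwell}}$.
\begin{itemize}
\item If $\bar v\in\Omega_{\mathrm{Maxwell}}$, any placement of the liquid set with the prescribed measure, for instance one interval with two interfaces (periodic), gives a solution. Translations alone give infinitely many, which proves non-uniqueness.
\item If $\bar v\in\Omega_{\mathrm{stable}}$, no two-phase Maxwell solution exists. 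Moreover $\Psi$ coincides with its convex hull there, so the constant state $\bar v$ is the strict minimizer. This gives uniqueness. For the converse direction of ``iff'', uniqueness fails whenever $\bar v$ lies in the Maxwell region, by the construction above.
\end{itemize}

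The main obstacle is the selection step: excluding non-Maxwell pairs $(\alpha_\sigma,\beta_\sigma)$ and the metastable constants $\bar v\in\Omega_{\mathrm{metastable}}$. These do solve \eqref{phase transition equation}, yet must be ruled out for the ``iff'' to hold. I would handle this by comparing energies directly. Starting from a constant metastable state, or a non-Maxwell two-phase state, I would show that a competitor built from $\alpha_0,\beta_0$ with the same mean has strictly smaller $\int\Psi$. This rests on strict convexity failure of $\Psi$ on $(\alpha_0,\beta_0)$ relative to its convex envelope, which is the linear chord over that interval. Making this rigorous requires stating explicitly that ``solution'' in the theorem means energy-minimizing (admissible) piecewise constant solution, consistent with the admissibility notion the paper later justifies via the artificial-viscosity limit.
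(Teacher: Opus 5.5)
Your proposal is correct and follows essentially the paper's route. The paper likewise recasts the problem as the constrained minimization \eqref{P0}, gets $p(v)=\sigma$ from the Euler--Lagrange equation, and gets the Maxwell pair $\{\alpha_0,\beta_0\}$ from the Weierstrass--Erdmann corner condition (your common-tangent condition); it then reads off $l_1,l_2$ and the dichotomy $\bar v\in\Omega_{\mathrm{Maxwell}}$ versus $\bar v\in\Omega_{\mathrm{stable}}$ from the volume constraint. You are more careful than the paper in stating that the theorem only holds for energy-minimizing solutions, and your convex-envelope/Jensen argument supplies the global-minimality and uniqueness step the paper merely asserts; the paper builds the restriction in by claiming \eqref{phase transition equation} is equivalent to \eqref{P0}, which fails for metastable constants and non-Maxwell pairs $(\alpha_\sigma,\beta_\sigma)$. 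Since $\Psi$ lies strictly above its convex envelope on all of $(\alpha_0,\beta_0)$, that same argument also rules out $\xi_\sigma$, so the non-rigorous appeal to the linear instability of Remark~\ref{linearized analysis} is unnecessary.
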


\

\begin{remark}
Theorem \ref{Existence} demonstrates how the  integrated average of specific volume directly determines whether the steady flow of a van der Waals fluid, governed by system \eqref{phase transition equation}, exhibits phase transition solutions. Meanwhile, these phase transition solutions correspond to the minimizers of the energy functional constructed in Section \ref{sec 2} (see \eqref{Lagrange multiplier}). Mathematically, however, such an approach still yields an infinite set of possible phase transition solutions. To identify which among them is physically meaningful, further physical constraints must be introduced, naturally motivating the definition of admissible solutions (see  Definition \ref{Admissible solution}). To resolve this issue, we adopt the method of introducing artificial viscosity to construct approximate solutions (see Theorem \ref{existence of trivial solution}).
\end{remark}

Motivated by both computational and analytical considerations (see Hsieh-Wang \cite{HW-1997}, He-Liu-Shi \cite{HLS} and references therein), it is essential to develop a set of techniques for constructing smooth approximations to the discontinuous solutions described above. To this end, we adopt an approach based on an approximating system of equations.

We propose an approximate system designed to capture the relevant energy-minimizing configurations. Our approach employs viscous regularization: we analyze system \eqref{phase transition equation} under constraints \eqref{constraint for v,u} by introducing an artificial viscosity term. Specifically, adding a term of the form $\varepsilon v v_{xx}$ to the first equation of \eqref{steady-state ns-lagrange} yields the following approximate system:

\begin{equation}\label{Approximate ns-lagrange}
\left\{\begin{array}{llll}
\displaystyle -u_{x}=\varepsilon^2 vv_{xx},& x\in[-1,1],\\
\displaystyle p_{x}(v)=\big(\frac{u_{x}}{v}\big)_{x},& x\in[-1,1], \\
\displaystyle (v,u)\big|_{x=-1}=(v,u)\big|_{x=1},
\end{array}\right.
\end{equation}with constraint \eqref{constraint for v,u}. 
We now consider the periodic boundary value problem for $(v, u)$ defined by the approximate system \eqref{Approximate ns-lagrange}. The following analysis presents the corresponding existence theory.

\begin{theorem}\label{existence of trivial solution}
For the steady-state system \eqref{Approximate ns-lagrange} with artificial viscosity term $\varepsilon^2 vv_{xx}$ under constraint \eqref{constraint for v,u}, the solution is unique and trivial (i.e., $v(x) \equiv \bar v$), provided that the viscosity coefficient $\varepsilon$ satisfies
\begin{equation}\label{uniqueness conition}
\varepsilon^2\pi^2 > \max_{v \in [\alpha, \beta]} p'(v),
\end{equation}
with $\alpha$ and $\beta$ being the two critical points of pressure $p$ (Lemma \ref{properties of pressure}; see also Remark \ref{Maxwell} and Figure 1). Provided further that $\varepsilon > 0$ is sufficiently small and $\bar v \in \Omega_{\mathrm{unstable}}=(\alpha,\beta)$, at least one non-trivial solution exists.
\end{theorem}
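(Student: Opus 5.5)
Integrating \eqref{Approximate ns-lagrange}$_2$ once gives $p(v)-\frac{u_x}{v}=\sigma$ for a constant $\sigma$. Substituting $u_x=-\varepsilon^2 v v_{xx}$ from \eqref{Approximate ns-lagrange}$_1$ yields the scalar periodic problem
\begin{equation*}
\varepsilon^2 v_{xx}+p(v)=\sigma,\qquad v(-1)=v(1),\ v_x(-1)=v_x(1),\qquad \tfrac12\int_{-1}^1 v\,dx=\bar v .
\end{equation*}
Periodicity of $v_x$ follows because $u_x=-\varepsilon^2 v v_{xx}$ must integrate to zero over the period, and $u$ is recovered from $v$ up to an additive constant. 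So $\sigma$ acts as a Lagrange multiplier for the mass constraint, and the equation is the Euler--Lagrange equation of
\begin{equation*}
E(v)=\int_{-1}^1\Big(\tfrac{\varepsilon^2}{2}v_x^2+P(v)\Big)dx,\qquad P'=-p,
\end{equation*}
on periodic $H^1$ functions with fixed mean $\bar v$. Both parts of Theorem \ref{existence of trivial solution} will be proved for this reduced problem.

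\textbf{Uniqueness.} Let $v$ be any solution and set $w=v-\bar v$, which has zero mean. Differentiating the equation gives $\varepsilon^2 w_{xxx}+p'(v)w_x=0$. Multiplying by $w_x$ and integrating over the period gives
\begin{equation*}
\varepsilon^2\int w_{xx}^2\,dx=\int p'(v)w_x^2\,dx\le M\int w_x^2\,dx,\qquad M:=\max_{[\alpha,\beta]}p' .
\end{equation*}
The bound holds because $p'\le 0$ outside $(\alpha,\beta)$ by Lemma \ref{properties of pressure}, so $p'\le M$ everywhere. Since $w_x$ is periodic with zero mean, Wirtinger's inequality on an interval of length $2$ gives $\int w_{xx}^2\ge \pi^2\int w_x^2$. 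Under \eqref{uniqueness conition} this forces $w_x\equiv0$, so $v\equiv\bar v$, and then $u$ is constant. The only point to check is that the Poincaré constant for period $2$ is exactly $\pi^2$, which matches the stated condition.

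\textbf{Non-trivial solution for small $\varepsilon$ and $\bar v\in(\alpha,\beta)$.} I would use the variational structure: minimize $E$ over the closed affine set $\{v\in H^1_{per}:\ \text{mean}=\bar v,\ v>b\}$. Points near $v=b$ can be excluded by truncating $p$ or using $P(v)\to+\infty$ as $v\to b^+$, together with an a priori $L^\infty$ bound from the $H^1$ control. Coercivity and weak lower semicontinuity then give a minimizer $v_\varepsilon$ that solves the Euler--Lagrange equation with some multiplier $\sigma$. To show $v_\varepsilon$ is not the constant, compute the second variation at $\bar v$ in the direction $\phi=\cos(\pi x)$, which is admissible because it has zero mean:
\begin{equation*}
\int\big(\varepsilon^2\phi_x^2-p'(\bar v)\phi^2\big)\,dx=\big(\varepsilon^2\pi^2-p'(\bar v)\big)\int\phi^2\,dx<0
\end{equation*}
once $\varepsilon^2\pi^2<p'(\bar v)$. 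This quantity is positive for $\bar v\in(\alpha,\beta)$, so $E(\bar v+t\phi)<E(\bar v)$ for small $t$, the constant is not a minimizer, and $v_\varepsilon$ is non-trivial.

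The main obstacle is the existence and regularity of the minimizer near the singularity $v=b$, where $p\to+\infty$. My first attempt would be to replace $p$ by a monotone extension below some $b'>b$, obtain the minimizer, and then use a phase-plane or maximum principle argument on $\varepsilon^2v_{xx}=\sigma-p(v)$ to show $v$ stays above $b'$. If that step is troublesome, a fallback is a phase-plane construction: conservation of $\frac{\varepsilon^2}{2}v_x^2+\Pi_\sigma(v)$ produces periodic orbits around the center $\xi_\sigma$. One would then select $\sigma$ and the orbit so that the period divides $2$ and the mean equals $\bar v$, via a continuity argument starting from the linearized period $2\pi\varepsilon/\sqrt{p'(\xi_\sigma)}$, which is small when $\varepsilon$ is small.
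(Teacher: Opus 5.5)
Your route is the paper's. You reduce via the anti-derivative to $\varepsilon^2 v_{xx}+p(v)=\sigma$ with $v,v_x$ periodic and prescribed mean. You get uniqueness from an energy identity with the period-$2$ Wirtinger constant $\pi^2$: you differentiate and test with $w_x$, while the paper tests the undifferentiated equation with $V$ and uses the mean value theorem, and the threshold is the same. You get a non-constant solution by minimizing the energy and beating the constant along the first Fourier mode. Your second variation along $\cos\pi x$ gives the explicit threshold $\varepsilon^2\pi^2<p'(\bar v)$, whereas the paper evaluates $G$ at a small fixed multiple of that mode. Your handling of the singularity at $v=b$ is more careful than the paper's, and the truncation idea closes. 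At a maximum point $\sigma\le p(\max v)\le\overline\sigma$, since $\max v\ge\bar v>\alpha$. Hence a decreasing extension of $p$ below some $b'<\bar\alpha$ cannot be reached at a minimum point.

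The genuine gap is your claim that periodicity of $v_x$ follows from $\int_{-1}^1u_x\,dx=0$. It does not. With $u_x=-\varepsilon^2 v v_{xx}$, the condition $u(-1)=u(1)$ in \eqref{Approximate ns-lagrange} reads
\[
0=\int_{-1}^{1}v\,v_{xx}\,dx=v(1)\big[v_x(1)-v_x(-1)\big]-\int_{-1}^{1}v_x^2\,dx .
\]
If $v_x$ is periodic, as you and the paper impose, this forces $\int_{-1}^1 v_x^2\,dx=0$, i.e.\ $v\equiv\bar v$. If it is not imposed, every non-constant solution must have $v_x(1)>v_x(-1)$. Either way the scalar problem you analyse is not equivalent to the original system. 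For your non-constant minimizer $v_\varepsilon$, the velocity recovered from $u_x=-\varepsilon^2v_\varepsilon(v_\varepsilon)_{xx}$ satisfies $u(1)-u(-1)=\varepsilon^2\int_{-1}^1(v_\varepsilon)_x^2\,dx>0$, so $(v_\varepsilon,u)$ violates the periodic boundary condition.

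Consequently, in the $C^1$-periodic setting, the uniqueness half holds for \eqref{Approximate ns-lagrange} for every $\varepsilon>0$, trivially. The existence half is only established for the reduced problem \eqref{second-order ODE}--\eqref{constraint for V}, in which periodicity of $u$ has been discarded. The paper's proof makes the same silent reduction, so your argument proves exactly what the paper proves. You should, however, present the reduction as a one-way implication and state explicitly that $u(-1)=u(1)$ is dropped, rather than claiming it yields periodicity of $v_x$.
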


\

\begin{remark}
Theorem \ref{existence of trivial solution} establishes the well-posedness of smooth solutions to system \eqref{Approximate ns-lagrange} with artificial viscosity $\varepsilon$ added. The theorem indicates that when $\varepsilon$ is sufficiently large, the smooth solution of \eqref{Approximate ns-lagrange} is unique regardless of the value of the volume-averaged integral. This observation is consistent with the analysis of the linearized system \eqref{system with artificial viscosity} in Remark \ref{linearized analysis}, where excessive artificial viscosity drives the wave numbers of oscillations to zero. Conversely, if the volume-averaged integral falls within the unstable region $\Omega_{unstable}$, the mathematical uniqueness of smooth solutions to \eqref{Approximate ns-lagrange} breaks down as $\varepsilon$ decreases. From a physical viewpoint, however, distinguishing which solutions are physically meaningful becomes a pressing issue. Consequently, Definition \ref{Admissible solution} and Theorem \ref{thm:approximation} emerge naturally from the discussion above as follows. It should also be particularly noted here that, the conclusion regarding the non-uniqueness of solutions can in fact be extended to the case where the volume-averaged integral lies within the Maxwell region $(\alpha_0,\beta_0)$ (see Theorem \ref{thm:approximation} and the accompanying Remark \ref{rem:6}).
\end{remark}

\


In the inviscid limit $\varepsilon \to 0$, interfaces can emerge as density jumps without an associated energy penalty. Within an energy framework that accounts for interfacial contributions, it is therefore expected that the minimal-energy two-phase periodic solution will exhibit a double-interface structure. This expectation is confirmed by numerical studies of the one-dimensional periodic problem \eqref{steady-state ns-lagrange} for a van der Waals fluid (see Hsieh--Wang \cite{HW-1997}, He--Liu--Shi \cite{HLS} and references therein). When the average initial density lies in the Maxwell region, the solution evolves from initial oscillations between the two Maxwell points toward a stable two-phase state characterized by exactly two distinct interfaces. The resulting profile, illustrated in Fig.~2, is either single-peak or single-valley; each profile corresponds to a piecewise smooth solution of system \eqref{steady-state ns-lagrange}. Guided by these combined physical and numerical insights, we introduce the concept of a \textbf{phase transition solution with two distinct interfaces}.

\

\begin{definition}\label{phase transition solution with two distinct interfaces}
A function whose profile is either single-peaked or single-dipped and which constitutes a piecewise smooth solution of system \eqref{steady-state ns-lagrange} is called a \textbf{phase transition solution with two distinct interfaces}. It is denoted by $\mathcal{V}_{\mathrm{single\text{-}peak}}(x)$ or $\mathcal{V}_{\mathrm{single\text{-}valley}}(x)$, and defined explicitly as follows (see Figure 2):
\begin{equation}\label{Phase-transition-solution-1}
  \mathcal{V}_{\mathrm{single\text{-}peak}}(x)=\begin{cases}
  \alpha_0, & x\in[-1,-1+l_{11}),\\
  \beta_0, & x\in[-1+l_{11},-1+l_{11}+l_2],\\
  \alpha_0, & x\in(-1+l_{11}+l_2, 1],
\end{cases}
\end{equation}
or
\begin{equation}\label{Phase-transition-solution-2}
  \mathcal{V}_{\mathrm{single\text{-}valley}}(x)=\begin{cases}
  \beta_0, & -1\leq x\leq-1+l_{21},\\
  \alpha_0, & -1+l_{21}<x\leq-1+l_{21}+l_1,\\
  \beta_0, & -1+l_{21}+l_1<x\leq 1,
\end{cases}
\end{equation}
where the lengths $l_1$ and $l_2$ are given by
\begin{equation}\label{lii}
  l_1 = l_{11}+l_{12}=\frac{2(\beta_0-\bar{v})}{\beta_0-\alpha_0},\qquad 
  l_2 = l_{21}+l_{22}=\frac{2(\bar{v}-\alpha_0)}{\beta_0-\alpha_0}.
\end{equation}
\end{definition}
 
 \
\begin{remark}
While the measures of the two-phase regions $l_1$ and $l_2$ can be determined, the precise locations $l_{11}$ and $l_{21}$ at which the phase transition occurs cannot be identified with the techniques employed in this work (see theorems \ref{Existence}--\ref{thm:approximation}). Hence, although the extent of the gas-liquid coexistence is established, the exact transition points remain unresolved.
\end{remark}
 
 \begin{figure}[!htbp]
 \centering\subfigure[a piecewise smooth solution with a single-peak shape $\mathcal{V}_{\mathrm{single-peak}}(x)$]
 {\begin{minipage}[t]{0.45\textwidth}\centering\includegraphics[width=2.0 in,height=2.0 in]{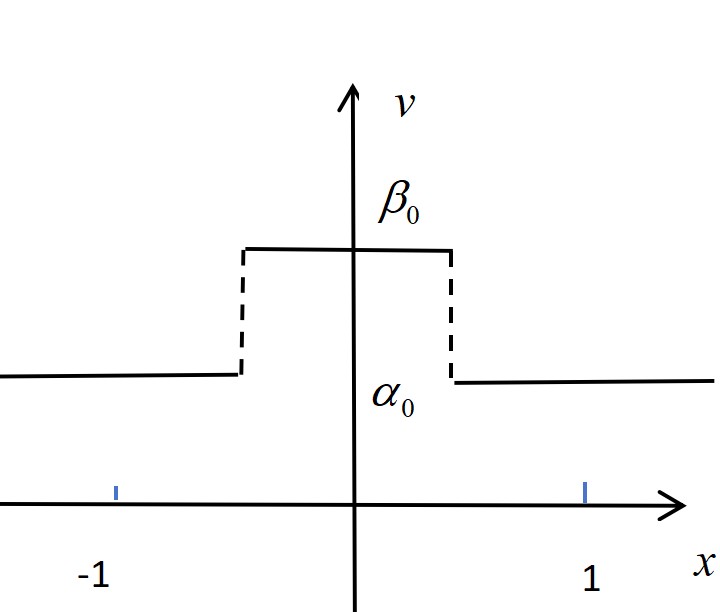}
\end{minipage}}
\hfill
\subfigure[a piecewise smooth solution with a single-valley shape $\mathcal{V}_{\mathrm{single-valley}}(x)$ ]
{\begin{minipage}[t]{0.45\textwidth}
\centering\includegraphics[width=2.5 in,height=2.3 in]{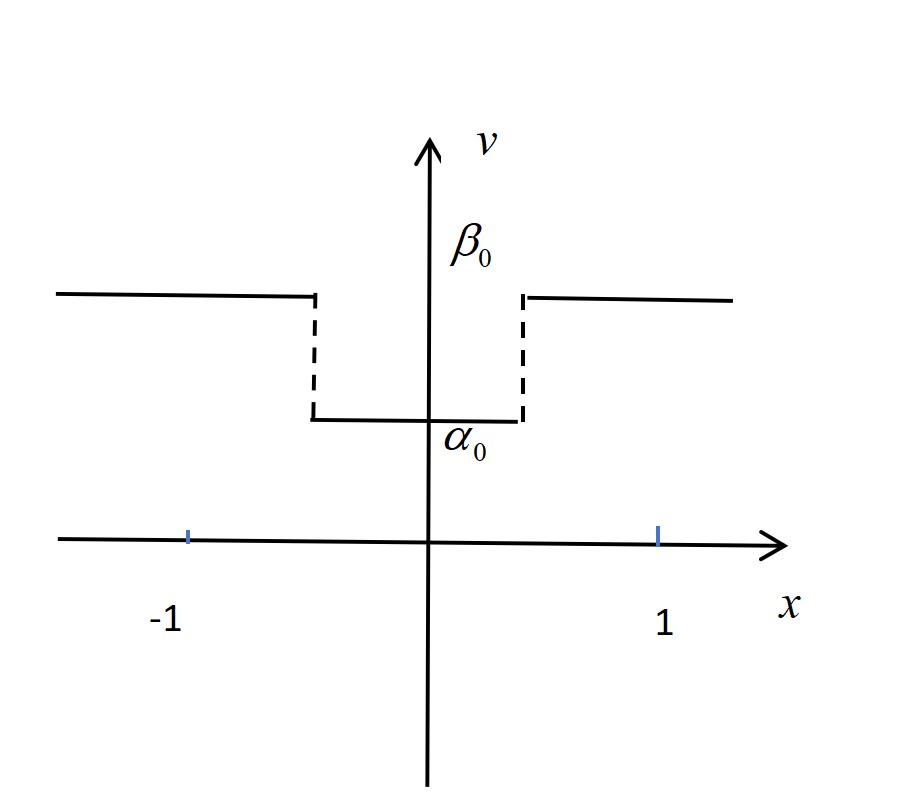}
\end{minipage}}
\centering\caption{phase transition solution with two distinct interfaces}
\label{model2figure1}\end{figure}

The introduction of Definition \ref{phase transition solution with two distinct interfaces} naturally leads us to analyze the system \eqref{phase transition equation} within a variational framework, and we proceed to formulate the minimization of the following functional
\begin{equation}\label{P0}
 \min_{v\in H_{\mathrm{per}}^1([-1,1])} \int_{-1}^1 W(v)dx,\quad \mathrm{subject\ to\ \ } \frac1{2}\int_{-1}^{1}v(x)dx=\bar v,
\end{equation}
where
\begin{equation}\label{W}
  W(v)\xlongequal{\mathrm{def}}\displaystyle-\int_{\bar v}^{v}p(s)ds.
\end{equation}

Theorem \ref{thm:approximation} below describes the convergence of physically relevant approximate solutions of the  system \eqref{Approximate ns-lagrange} to the piecewise smooth solution of the system \eqref{phase transition equation} as the artificial viscosity $\varepsilon^2$ approaches zero. The analysis relies primarily on phase-plane method, variational method,  level-set technique, Taylor expansion and the theorem of existence of inverse function. Due to the non-uniqueness of solutions to the  system \eqref{Approximate ns-lagrange}, most of which are mathematically valid but not necessarily physical, we employ the second variation method to identify those with physical significance. For clarity, we refer to this selected class of solutions, characterized by their distinct geometric and physical properties, as phase transition solutions with two distinct interfaces. The following defines a smooth approximate solution that captures the structure of phase transitions with two distinct interfaces.
 
 \
 
 \begin{definition}\label{two-smooth-interface-phase-transition-solution}
A nonconstant solution $v(x)$ of \eqref{Approximate ns-lagrange} subject to the constraint \eqref{constraint for v,u} is called a \textbf{two-smooth-interface phase transition solution} if it satisfies the following:
\begin{enumerate}
\item $v$ changes its monotonicity exactly once in the interval $[-1, 1]$;
\item $v$ is either increasing then decreasing, or decreasing then increasing.
\end{enumerate}
According to their geometric shape, such solutions are referred to as \textbf{single-peak} or \textbf{single-valley} solutions, and are respectively denoted by $v^{\varepsilon}_{\text{single-peak}}$ and $v^{\varepsilon}_{\text{single-valley}}$.
\end{definition}

\

More generally, we can define the $N$-th phase transition solution as follows:

\begin{definition}\label{2N-smooth-interface-phase-transition-solution}
A nonconstant solution $v(x)$ of \eqref{Approximate ns-lagrange} under the constraint \eqref{constraint for v,u} is called a \textbf{$2N$-smooth-interface phase transition solution} (with $N\geq 2$) if $v$ changes its monotonicity exactly $2N$ times over the interval $[-1, 1]$.
\end{definition}

 \
 
 With the preliminaries in place, we now formally define the admissible solution, that is, the physically meaningful solution, to the steady-state system \eqref{steady-state ns-lagrange}.
 
 \
 
\begin{definition}\label{Admissible solution}
(\textbf{Admissible solution})
Let ${(v^\varepsilon_{\text{single-peak}}, u^\varepsilon)}_{\varepsilon>0}$ (or ${(v^\varepsilon_{\text{single-valley}}, u^\varepsilon)}_{\varepsilon>0}$) be a family of classical solutions to the approximate system \eqref{Approximate ns-lagrange} on $\mathbb{R}$. Suppose this family is locally uniformly bounded and that each component $v^\varepsilon_{\text{single-peak}}$ (or $v^\varepsilon_{\text{single-valley}}$) is a two-smooth-interface phase transition solution (i.e., a single-peak or single-valley solution) in the sense of Definition \ref{two-smooth-interface-phase-transition-solution}. If ${(v^\varepsilon_{\text{single-peak}}, u^\varepsilon)}_{\varepsilon>0}$ (or ${(v^\varepsilon_{\text{single-valley}}, u^\varepsilon)}_{\varepsilon>0}$) converges almost everywhere to a pair $(v, u)$ as $\varepsilon \to 0^+$, then the limit $(v, u)$ is called an \textbf{admissible solution} of the steady-state system \eqref{steady-state ns-lagrange}.
\end{definition}

\

\begin{theorem}\label{thm:approximation}
    There exists $\varepsilon_0 > 0$ such that for every $0 < \varepsilon < \varepsilon_0$ and for any $\bar v$ belonging to a closed subset of $\Omega_{\mathrm{Maxwell}}=(\alpha_0, \beta_0)$, the system \eqref{Approximate ns-lagrange} under the constraint \eqref{constraint for v,u} admits exactly \textbf{two-smooth-interface phase transition solutions} possessing two interfaces: one of single-valley type, denoted by $v^{\varepsilon}_{\mathrm{single-valley}}$, and one of single-peak type, denoted by $v^{\varepsilon}_{\mathrm{single-peak}}$.
    Moreover, for each $\bar v \in (\alpha_0, \beta_0)$ and for all $x \in [-1,1]$, the following pointwise convergence holds as $\varepsilon \to 0$:
    \begin{align*}
        \lim_{\varepsilon \to 0} v^{\varepsilon}_{\mathrm{single-peak}}(x) &= \mathcal{V}_{\mathrm{single-peak}}(x), \quad x \neq -1 + l_1,\; x \neq -1 + l_1 + l_2, x\in[-1,1],\\[4pt]
        \lim_{\varepsilon \to 0} v^{\varepsilon}_{\mathrm{single-valley}}(x) &= \mathcal{V}_{\mathrm{single-valley}}(x), \quad x \neq -1 + l_2,\; x \neq -1 + l_2 + l_1, x\in[-1,1],
    \end{align*}
    where $\mathcal{V}_{\mathrm{single-peak}}$ and $\mathcal{V}_{\mathrm{single-valley}}$ denote the phase transition solutions with two distinct interfaces for system \eqref{phase transition equation} subject to constraint \eqref{constraint for v,u}, as defined in Definition~\ref{phase transition solution with two distinct interfaces}. This implies that  $\mathcal{V}_{\mathrm{single-peak}}(x)$ or $\mathcal{V}_{\mathrm{single-valley}}(x)$ are the unique admissible solutions to the steady-state system \eqref{steady-state ns-lagrange} under the constraint \eqref{constraint for v,u}.
\end{theorem}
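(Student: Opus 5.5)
First reduce the approximate system \eqref{Approximate ns-lagrange} to a scalar second-order ODE. Substituting \eqref{Approximate ns-lagrange}$_1$ into \eqref{Approximate ns-lagrange}$_2$ gives $p(v)_x=-(\varepsilon^2 v_{xx})_x$. Integrating once yields
\[
\varepsilon^2 v_{xx}+p(v)=\sigma ,
\]
where $\sigma$ is a constant acting as the Lagrange multiplier for the constraint \eqref{constraint for v,u}. This is exactly the Euler--Lagrange equation of $\int_{-1}^1\big(\tfrac{\varepsilon^2}{2}v_x^2+W(v)\big)dx$ under the mass constraint, which is the regularized version of \eqref{P0}. Multiplying by $v_x$ gives the conserved energy
\[
\tfrac{\varepsilon^2}{2}v_x^2=E-\big(P(v)-\sigma v\big),\qquad P'=p .
\]
So every periodic nonconstant solution is a closed orbit in the $(v,v_x)$ phase plane, around the center of the effective potential $\Phi_\sigma(v)=P(v)-\sigma v$.

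By Lemma \ref{properties of pressure}(v), for $\sigma\in(\underline\sigma,\overline\sigma)$ the critical points of $\Phi_\sigma$ are $\alpha_\sigma<\xi_\sigma<\beta_\sigma$. The middle one $\xi_\sigma$ is a local maximum of $\Phi_\sigma$ (since $p'(\xi_\sigma)>0$), so it is a center of the Hamiltonian flow. The periodic orbits fill a well bounded by the homoclinic loop through the lower of $\alpha_\sigma,\beta_\sigma$.

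A solution changing monotonicity exactly once on $[-1,1]$ (Definition \ref{two-smooth-interface-phase-transition-solution}) is precisely one full period of such an orbit, with period $2$. Its translates account for the positions $l_{11}$ and $l_{21}$. The two types, single-peak and single-valley, correspond to choosing the phase so that $x=\pm1$ sits at the minimum or at the maximum of the orbit.

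Let $v_{\min}<\xi_\sigma<v_{\max}$ be the turning points, i.e.\ the roots of $\Phi_\sigma(v)=E$. The problem then reduces to two equations in the unknowns $(\sigma,E)$:
\[
T(\sigma,E):=\sqrt2\,\varepsilon\int_{v_{\min}}^{v_{\max}}\frac{dv}{\sqrt{E-\Phi_\sigma(v)}}=1,
\qquad
M(\sigma,E):=\sqrt2\,\varepsilon\int_{v_{\min}}^{v_{\max}}\frac{v\,dv}{\sqrt{E-\Phi_\sigma(v)}}=\bar v .
\]
The first equation is the half-period condition. The second is the average constraint over one period.

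The main step, and the expected obstacle, is showing this $2\times2$ system has exactly one solution $(\sigma_\varepsilon,E_\varepsilon)$ for small $\varepsilon$, uniformly for $\bar v$ in a closed subset of $(\alpha_0,\beta_0)$. Since $\varepsilon\to0$ forces $T/\varepsilon\to\infty$, the orbit must approach a homoclinic loop, so $E$ tends to $\min(\Phi_\sigma(\alpha_\sigma),\Phi_\sigma(\beta_\sigma))$.

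To have a nontrivial mass split in the limit, both equilibria must be approached, which forces $\Phi_\sigma(\alpha_\sigma)=\Phi_\sigma(\beta_\sigma)$. This is exactly the Maxwell rule \eqref{Maxwell equal area rule}, i.e.\ $\sigma\to p(\alpha_0)$.

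Near the heteroclinic configuration, I would use the logarithmic asymptotics of the time spent near the hyperbolic points. The time near $\alpha_\sigma$ is $\sim \varepsilon\,|\log\delta_1|/\sqrt{-p'(\alpha_\sigma)}$, and similarly near $\beta_\sigma$, where $\delta_1,\delta_2$ measure the distance of $E$ from $\Phi_\sigma(\alpha_\sigma)$ and $\Phi_\sigma(\beta_\sigma)$. Writing the unknowns as these two small parameters turns $(T,M)=(1,\bar v)$ into
\[
l_1\approx\frac{\varepsilon|\log\delta_1|}{\sqrt{-p'(\alpha_0)}},\qquad l_2\approx\frac{\varepsilon|\log\delta_2|}{\sqrt{-p'(\beta_0)}},
\]
with $l_1+l_2=2$ and mass $l_1\alpha_0+l_2\beta_0=2\bar v$, plus $O(\varepsilon)$ corrections. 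This recovers \eqref{lii}.

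I would then apply the inverse function theorem to the map $(\delta_1,\delta_2)\mapsto(T,M)$ in rescaled logarithmic variables. Its Jacobian is asymptotically diagonal and nondegenerate, and the closedness of the $\bar v$-set keeps $l_1,l_2$ bounded away from $0$, so $\delta_1,\delta_2$ stay exponentially small but both positive. This gives existence and uniqueness of $(\sigma_\varepsilon,E_\varepsilon)$. Hence, up to translation, there is exactly one single-peak and one single-valley solution.

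For the convergence, $v^\varepsilon$ spends time $l_1+O(\varepsilon|\log\varepsilon|)$ within $o(1)$ of $\alpha_0$ and time $l_2+O(\varepsilon|\log\varepsilon|)$ near $\beta_0$. The transitions occupy intervals of length $O(\varepsilon|\log\varepsilon|)$ around the two interface points. This yields pointwise convergence to $\mathcal V_{\mathrm{single\text{-}peak}}$ or $\mathcal V_{\mathrm{single\text{-}valley}}$ away from the interfaces.

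Since $u_x=-\varepsilon^2 v v_{xx}$ is bounded by $\varepsilon^2 v\,|p(v)-\sigma|\to0$ uniformly, $u^\varepsilon$ converges to a constant. By Theorem \ref{Existence}, the limit solves \eqref{phase transition equation} with the constraint, which gives admissibility in the sense of Definition \ref{Admissible solution}. Uniqueness of the admissible limits follows from the uniqueness of the two-interface approximants just established.
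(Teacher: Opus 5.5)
\textbf{The overall route matches the paper's.} You reduce to $\varepsilon^2v_{xx}+p(v)=\sigma$, impose the period and mass integral conditions, use logarithmic sojourn times near the two saddles, and apply an inverse-function argument in rescaled logarithmic variables. This is the Carr--Gurtin--Slemrod scheme the paper follows via $h_i=e^{\mu_ik_i}e^{-c_i(\bar v)/\varepsilon}$ and Proposition~\ref{existence of k1 and k2}.

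\textbf{The genuine gap is the word ``exactly''.} The inverse-function step gives uniqueness only near the heteroclinic configuration $(\sigma_0,\lambda_0)$. The paper, too, only obtains a unique $\mathbf k\in B_\delta$, and its energy comparison with constant and $2N$-interface solutions does not address other single-oscillation solutions. Your globalization fails at both of its steps.

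First, ``$T/\varepsilon\to\infty$ forces a homoclinic loop'' overlooks the corners of $D^\varepsilon$ where $\xi_\sigma$ merges with $\alpha_\sigma$ or $\beta_\sigma$. There $p'(\xi_\sigma)\to0$, so periods blow up with no loop approached. This matters because $\alpha,\beta\in(\alpha_0,\beta_0)$.

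Second, and more seriously, nothing forces a nontrivial mass split. Take $\bar v\in(\alpha_0,\alpha)$ and let $\sigma^*=p(\bar v)\in(\underline\sigma,\sigma_0)$. Then $\alpha_{\sigma^*}=\bar v$, and by Lemma~\ref{properties of w}(iii) the well around $\xi_\sigma$ is bounded by the homoclinic loop through $\alpha_\sigma$ alone.
\begin{itemize}
\item Orbits just inside this loop have $x$-period $2$ for an exponentially small energy gap.
\item Their mean is $\alpha_\sigma+O(\varepsilon)$, uniformly for $\sigma$ near $\sigma^*$.
\item Since $d\alpha_\sigma/d\sigma=1/p'(\alpha_\sigma)<0$ and the period is monotone in the energy near the separatrix, the intermediate value theorem gives $\sigma_\varepsilon=\sigma^*+O(\varepsilon)$ with mean exactly $\bar v$.
\end{itemize}
Placed with an extremum at $x=\pm1$, this spike (the critical nucleus) is single-peak or single-valley in the sense of Definition~\ref{two-smooth-interface-phase-transition-solution}. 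Yet it converges to the constant $\bar v$ away from one point. The same happens on $(\beta,\beta_0)$.

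So ``exactly two'', and with it uniqueness of admissible limits under Definition~\ref{Admissible solution}, cannot be obtained on closed subsets of the whole Maxwell region. Your compactness idea can be made to work for $\bar v$ in a compact subset of $(\alpha,\beta)$, because the means of all other large-period orbits accumulate only in $(\alpha_0,\alpha]\cup[\beta,\beta_0)$. Otherwise, uniqueness must be restricted to the branch near $(\sigma_0,\lambda_0)$ or to global energy minimizers.

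\textbf{Your treatment of $u$ is also incorrect.} From $\varepsilon^2v_{xx}=\sigma-p(v)$ you get $u_x=-\varepsilon^2vv_{xx}=v\,(p(v)-\sigma)$. There is no factor $\varepsilon^2$, and $u_x$ is $O(1)$ inside the layers. Worse, for any nonconstant periodic $v$,
$\int_{-1}^1u_x\,dx=-\varepsilon^2\int_{-1}^1vv_{xx}\,dx=\varepsilon^2\int_{-1}^1v_x^2\,dx>0$.
So $u(1)\neq u(-1)$, and no nonconstant $v$ yields a genuinely periodic pair $(v,u)$ for \eqref{Approximate ns-lagrange}. The paper makes the same silent reduction to the scalar problem \eqref{second-order ODE} and never reconstructs $u$. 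Your argument, like the paper's, therefore concerns only that scalar problem, and the $u$-part of admissibility is unsupported.

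\textbf{Smaller slips.}
\begin{itemize}
\item $\xi_\sigma$ is a local minimum of $\Phi_\sigma=P-\sigma v$, since $\Phi_\sigma''(\xi_\sigma)=p'(\xi_\sigma)>0$; that is why it is a center.
\item Your conditions $T=1$, $M=\bar v$ fix the full period at $1$, i.e.\ two oscillations on $[-1,1]$. The correct condition is that the half-period $\frac{\varepsilon}{\sqrt2}\int dv/\sqrt{E-\Phi_\sigma}$ equals $1$.
\item Translations are not free. Requiring a single change of monotonicity pins an extremum at $x=\pm1$, and reversibility under $x\mapsto -x$ then puts the other extremum at $x=0$. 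Hence the single-peak limit has its interfaces at $\pm l_2/2$ and the single-valley limit at $\pm l_1/2$, not at the points listed in the statement.
\end{itemize}
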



\

\begin{remark}\label{rem:6}
Theorem \ref{thm:approximation} shows that even if the volume-averaged integral falls within the semi-stable region, the smooth solutions of system \eqref{Approximate ns-lagrange} still exhibit non-uniqueness. In fact, there exists at least one solution with two jumps, and even solutions with $2N$ jumps may arise , where the number 
$N$ is determined by the length of the period and the volume-averaged integral (see \eqref{The necessary condition for the number of phase transitions} for details). This result extends and complements Theorem \ref{existence of trivial solution}.
\end{remark}

\begin{remark}
Theorems \ref{Existence}--\ref{thm:approximation} together establish that for $\bar v \in \Omega_{\mathrm{Maxwell}}=(\alpha_0, \beta_0)$, the periodic problem for system \eqref{steady-state ns-lagrange} admits a unique two-phase steady-state solution within the class of two-sharp-interface phase transition solutions. This solution exhibits a double-interface structure with two distinct density jumps and can be constructively approximated via the artificial viscosity method. Specifically, the two-smooth-interface phase transition solutions of the regularized system converge in the $L^\infty$ norm to the corresponding two-sharp-interface solution. Conversely, if $\bar v$ lies outside the Maxwell region, the solution remains in a single-phase state.
These results indicate that the instability associated with the Maxwell region can act as a driving mechanism for phase separation. More concretely, this work identifies a nucleation mechanism triggered by pressure-induced instability: a nonmonotonic pressure profile locally perturbs the system, forming critical nuclei from which distinct phases emerge and grow. The analysis thus provides a theoretical foundation for the numerical phenomena reported in Hsieh-Wang \cite{HW-1997} and He-Liu-Shi \cite{HLS}.
\end{remark}

\
At this point, all main theorems of this paper have been stated. Their rigorous proofs will be given in Section \ref{sec 2}. Before proceeding with the detailed analysis, we provide a brief overview of the structure of Section \ref{sec 2},  the main methods used in the proofs, and the key notations adopted throughout.

The proofs are organized as follows. In Section \ref{Proof-2-1}, we use a variational approach to construct a piecewise smooth phase-transition solution for system \eqref{steady-state ns-lagrange} under constraint \eqref{constraint for v,u}. Then, in Section \ref{Proof-2-2}, we turn to the system augmented with an artificial viscosity term, namely system \eqref{Approximate ns-lagrange}, and develop a corresponding variational framework to analyze the multiplicity of its steady states. Finally, the analysis in Section \ref{Proof-2-3} yields a family of two-smooth-interface phase-transition solutions, denoted by $\{v^{\varepsilon}_{\text{single-peak}}\}_{\varepsilon>0}$ or $\{v^{\varepsilon}_{\text{single-valley}}\}_{\varepsilon>0}$. A key result establishes that these solutions constitute the unique energy minimizer among all steady states of the system. Moreover, we show that they can be obtained as the singular limit of a suitably constructed approximate model. The detailed arguments rely on the calculus of variations, phase plane analysis, and the level-set technique, which are presented comprehensively across these subsections.

\

\noindent\textbf{\normalsize Notations.} Throughout this paper, we adopt the following notations for functional spaces:
$L^\infty(\mathbb{R})$  denotes the space of all bounded measurable functions on $\mathbb{R}$, equipped with the essential supremum norm $\|f\|_{L^\infty}=$ $\mathop{\esssup}_{x\in\mathbb{R}}|f|$. For $q\geq1$, $L^q(\mathbb{R})$ (abbreviated as $L^q$ when no ambiguity arises)  denotes the space of 
$q$-integrable real-valued functions on $\mathbb{R}$,  with the norm $\|f\|_{L^q}=(\int_{\mathbb{R}}|f|^qdx)^{\frac{1}{q}}$. For the special case  $q=2$, we simplify  $\|f\|_{L^2}$ to $\|f\|$. For $l>0$, $H^l(\mathbb{R})$ (abbreviated as $H^l$ when clear from context) denotes the Sobolev space of  $L^2$-functions $f$ on $\mathbb{R}$ whose weak derivatives $\partial^j_x f,  j=1,\cdots,l$ are also square-integrable. The norm on $H^l$ is defined as$ \|f\|_{l}=(\sum_{j=0}^l\|{\partial^j_x f}\|^2)^{\frac{1}{2}}$, where  $\|\cdot\|$ is the $L^2$-norm defined above.  Additionally, we denote the periodic Sobolev space $H^l_{\mathrm{per}}([-L,L])$ with period $L$ as
\begin{equation}\label{periodic Sobolev space}
 H^l_{\mathrm{per}}([-L,L])\xlongequal{\mathrm{def}}\Big\{f(x)\in  H^l(\mathbb{R})\Big|f(x)=f(x+L),\forall x\in\mathbb{R}\Big\},\quad l\geq0,
\end{equation}
and the periodic Sobolev space $H^1_{\mathrm{per}}(\mathbb{R})$ with zero periodic integral average value
\begin{equation}\label{periodic Sobolev space}
 H^l_{\mathrm{per},0}([-L,L])\xlongequal{\mathrm{def}}\Big\{f(x)\in  H^l_{\mathrm{per}}([-L,L])\Big|\int_{-L}^Lf(x)dx=0\Big\},\quad l\geq0.
\end{equation}

\



\

\section{Proof of the Main Results}\label{sec 2}

\ \ \ \ This section details the proof of our main results. We begin by examining the multiplicity of piecewise smooth solutions to system \eqref{phase transition equation}.

\subsection{Proof of Theorem 1.1.}\label{Proof-2-1}
\ \ \ \ Due to the non-monotonic behavior of pressure, the inviscid flow governed by system \eqref{ns-lagrange} without $(\frac{u_x}{v})_{x}$  results in a set of partial differential equations of mixed hyperbolic-elliptic type. Such a mixed-type system is mathematically ill-posed and physically unstable. Consequently, the steady-state solutions of system \eqref{ns-lagrange} are not unique; multiple solutions arise, with the piecewise smooth solution being the most natural among them. This multiplicity is determined by the average value $\bar v$, as established in Theorem \ref{Existence}. The detailed proof of Theorem \ref{Existence} is provided below.

\

\begin{proof}
Applying the variational principle, we deduce that system \eqref{phase transition equation} is mathematically equivalent to the variational problem \eqref{P0}. Moreover, 
\begin{equation}\label{W-vdw}
  W(v)=-\int_{\bar v}^{v}p(s)ds=-\big(\frac{a}{v}-\frac{a}{\bar{v}}\big)-R\theta\ln\big(\frac{v-b}{\bar v-b}\big) ,\quad v>b,  
\end{equation}
and
\begin{equation}\label{derivative of W}
  W'(v)=-p(v),\qquad W''(v)=-p'(v).
\end{equation}
Using the Lagrange multiplier method, problem \eqref{P0} is transformed into minimizing  
\begin{equation}\label{Lagrange multiplier}
  \min_{v\in H_{\mathrm{per}}^1([-1,1])} \int_{-1}^1\Big(W(v(x))+\sigma v(x)\Big)dx,
\end{equation}
where $\sigma$, satisfying \eqref{constraint for sigma}, acts as a Lagrange multiplier. A necessary condition for the existence of a minimizer is that the Euler-Lagrange equation and the Weierstrass-–Erdmann corner conditions hold:
  \begin{equation}\label{conition 1}
  \left\{\begin{array}{llll}
 W'(v)=-\sigma\quad \mathrm{at\ points\ of\ continuity\ of\ } v,\ \mathrm{while}\\
 W(v)+ \sigma v\quad \mathrm{is\ continuous\ across\ jumps\ in\ } v.\end{array}\right.
 \end{equation}
From \eqref{conition 1}, it follows that solutions are either constant (single-phase) or piecewise constant (two-phase). In the latter case, they take the form
\begin{equation}\label{Piecewise smooth solution}
  v(x)=\displaystyle\left\{\begin{array}{llll}
\displaystyle \alpha_0,&\displaystyle x\in S_1,\\
\displaystyle \beta_0,&\displaystyle x\in S_2.
 \end{array}\right.
\end{equation}
where $S_1$ and $S_2$ are disjoint measurable sets with $S_1 \cup S_2 = [-1, 1]$, and $\alpha_0$, $\beta_0$ are defined by the Maxwell conditions (see Figure 1):
\begin{equation}\label{Maxwell condition}
\left\{\begin{array}{llll}
\displaystyle
  W(\beta_0)-W(\alpha_0)=-\sigma_0(\beta_0-\alpha_0),\\
  \sigma_0=-W'(\beta_0)=p(\beta_0)=-W'(\alpha_0)=p(\alpha_0).\end{array}\right.
\end{equation}
Letting
\begin{equation}\label{l}
  l_i=\mathrm{measure} (S_i),
\end{equation}
from \eqref{P0} we obtain
\begin{equation}\label{li}
 l_1=\frac{2(\beta_0-\bar v)}{\beta_0-\alpha_0},\ \ l_2=\frac{2(\bar v- \alpha_0)}{\beta_0-\alpha_0},
\end{equation}
where $\bar v$ is given in \eqref{constraint for v,u}. Since $l_1 > 0$ and $l_2 > 0$, a necessary condition for a two-phase solution is
\begin{equation}\label{phase transition condition}
  \bar v\in\Omega_{\mathrm{Maxwell}}=(\alpha_0,\beta_0).
\end{equation}
When \eqref{phase transition condition} holds, any $v(x)$ of the form \eqref{Piecewise smooth solution} with $l_i$ given by \eqref{li} is a global minimizer of \eqref{P0}. If $\bar v \in\Omega_{\textrm{stable}}=(b, \alpha_0]\cup[\beta_0,+\infty)$, a two-phase solution is impossible, and the minimizer is the single-phase solution
\begin{equation}\label{single-phase}
  v(x)=\bar v.
\end{equation}
Therefore, the solutions of \eqref{steady-state ns-lagrange} are either constant (single-phase) or piecewise constant (phase transition). These two classes of solutions, especially the latter, mathematically demonstrate the multiplicity of piecewise smooth solutions to system \eqref{steady-state ns-lagrange}.
\end{proof}

\

The proof of Theorem \ref{Existence} relies primarily on classical variational methods. This result shows that the multiplicity of solutions to system \eqref{steady-state ns-lagrange} depends on whether the average value $\bar v$ lies within the Maxwell region $(\alpha_0, \beta_0)$. Mathematically, this leads to a loss of uniqueness. Physically, however, not all such solutions are necessarily meaningful. Therefore, it becomes essential to identify the physically relevant solutions according to appropriate criteria, here the energy minimization principle introduced in the following section.

\subsection{Proof of Theorem 1.2.}\label{Proof-2-2}

\ \ \ \ We now turn to the multi-solvability of the system \eqref{Approximate ns-lagrange} with artificial viscosity, detailing the structure of its solutions and providing a proof of Theorem \ref{existence of trivial solution}.
Inspired by the works Hsieh-Wang \cite{HW-1997}, He-Liu-Shi \cite{HLS}, from the numerical analysis results of \eqref{1-d NS}, we can observe that the magnitude of viscosity affects the multiplicity of solutions for the system \eqref{Approximate ns-lagrange}. Therefore, setting
\begin{equation}\label{V}
  V=v-\bar v.
\end{equation}

Below we apply the anti-derivative method. By substituting \eqref{Approximate ns-lagrange}$_1$ into \eqref{Approximate ns-lagrange}$_2$ and integrating over $[-1, x]$, we obtain the following second-order ordinary differential equation with periodic boundary conditions for $V$:
\begin{equation}\label{second-order ODE}
\left\{\begin{array}{llll}
\displaystyle -\varepsilon^2 V_{xx}=\big(p(V+\bar v)-p(\bar v)\big)-\sigma,& x\in[-1,1], \\
\displaystyle  V(x-1)=V(x+1), \ \ V_x(x-1)=V_x(x+1), & x\in[-1,1],
\end{array}\right.
\end{equation}
subject to the constraint
\begin{equation}\label{constraint for V}
  \int_{-1}^1V(x)dx=0,
\end{equation}
where $\sigma$ is a constant given by
 \begin{equation}\label{constraint for sigma}   
 \sigma=\frac12\int_{-1}^1\Big(p\big(V(x)+\bar v\big)-p(\bar v)\Big)dx.\end{equation}

To study the existence of steady-state solutions to system \eqref{second-order ODE}, we reformulate it as a variational problem. The solution is subsequently derived by seeking a critical point of a suitably constructed functional. Specifically, we define the following functional on the space $H^1_{\mathrm{per},0}([-1,1])$:
\begin{equation}\label{G}
  G(V)\xlongequal{\mathrm{def}}\int_{-1}^1\Big(H(V)+\frac{\varepsilon^2}2V_{x}^2\Big)dx,
\end{equation}
where 
\begin{equation}\label{H}
\left.\begin{array}{llll}
 \displaystyle H(V)&=&\displaystyle\int_{0}^V\big(p(\bar v)-p(s+\bar v)\big)ds\\
 &=&\displaystyle p(\bar v)V-\int_{0}^{V}p(s+\bar v)ds\\
 &=& \displaystyle p(\bar v)V-a\big(\frac{1}{(V+\bar v)}-\frac{1}{{\bar v}}\big)-R\theta\ln\frac{V+\bar v-b}{\bar v-b}.
   \end{array}\right.
\end{equation}
The steady-state solutions is characterized by the minimization of \eqref{G}. Consequently, the corresponding minimization problem is formulated as follows:
\begin{equation}\label{Pe}
\min_{V \in H_{\mathrm{per},0}^1([-1,1])} G(V).
\end{equation}
The proof of Theorem \ref{existence of trivial solution} relies on the following lemmas. We first state a Poincar$\mathrm{\acute{e}}$-type inequality, which can be established via Fourier series expansion; a detailed derivation is given in Mei-Wong-Liu \cite{MWL-2008} and is omitted here.

\begin{lemma}\label{poincare inequality}
The set $\left\{ \sin(\frac k\pi x),\, \cos(\frac k\pi x) \right\}_{k=1}^{\infty}$ constitutes a complete orthonormal basis of the space $L^2_{\mathrm{per},0}([-1,1])$.  
Moreover, every function $f \in H^1_{\mathrm{per},0}([-1,1])$ has at least one zero in $[-1,1]$, i.e., there exists $x_0 \in [-1,1]$ such that $f(x_0) = 0$, and satisfies the Poincar$\acute{e}$ inequality:
\begin{equation}\label{property of H01per}
  \|f\| \leq \frac{1}{\pi} \|f_x\|, \quad \text{and} \quad \|f\|_{C^\gamma([-1,1])} \leq C \|f_x\|, \quad 0 < \gamma \leq \frac12.
\end{equation}
\end{lemma}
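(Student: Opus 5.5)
The statement to prove is Lemma \ref{poincare inequality}. The plan is to work entirely with Fourier series on the period interval $[-1,1]$. I read the basis elements as $\sin(k\pi x)$ and $\cos(k\pi x)$, $k\geq 1$, which is the natural $2$-periodic family; the printed $\frac{k}{\pi}$ is presumably a typo. On $[-1,1]$ these functions are orthonormal in $L^2$, since $\int_{-1}^1\sin^2(k\pi x)\,dx=\int_{-1}^1\cos^2(k\pi x)\,dx=1$ and all cross products integrate to zero.

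\textbf{Step 1 (basis).} Completeness of the full trigonometric system $\{1/\sqrt2,\sin(k\pi x),\cos(k\pi x)\}$ in $L^2([-1,1])$ is classical, for instance via density of trigonometric polynomials by Stone--Weierstrass or Fejér. Functions with zero mean are exactly those orthogonal to the constant. Hence removing the constant mode gives a complete orthonormal basis of $L^2_{\mathrm{per},0}$.

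\textbf{Step 2 (Poincaré).} Take $f\in H^1_{\mathrm{per},0}$ and expand it as $f=\sum_{k\ge1}(a_k\cos k\pi x+b_k\sin k\pi x)$. Integrating by parts, with periodicity killing the boundary terms, shows that $f_x$ has coefficients $k\pi b_k$ and $-k\pi a_k$. Parseval then gives
\[
\|f_x\|^2=\sum_{k\geq 1}k^2\pi^2\,(a_k^2+b_k^2)\ \geq\ \pi^2\|f\|^2 ,
\]
which is $\|f\|\le \frac1\pi\|f_x\|$.

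\textbf{Step 3 (zero and Hölder bound).} Functions in $H^1$ on an interval have continuous representatives. Since $\int_{-1}^1 f\,dx=0$, $f$ cannot be strictly of one sign, so the intermediate value theorem gives a point $x_0$ with $f(x_0)=0$. For $x,y\in[-1,1]$, Cauchy--Schwarz gives
\[
|f(x)-f(y)|=\Big|\int_y^x f_x\Big|\le |x-y|^{1/2}\|f_x\| .
\]
Taking $y=x_0$ yields $\|f\|_{L^\infty}\le \sqrt2\,\|f_x\|$, and the displayed estimate gives the $C^{1/2}$ seminorm bound. For $0<\gamma\le \frac12$ we have $|x-y|^{1/2}\le 2^{1/2-\gamma}|x-y|^{\gamma}$ on $[-1,1]$, which gives the $C^\gamma$ bound with a uniform constant $C$.

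\textbf{Main obstacle.} Nothing here is deep. The only point needing care is the rigorous justification of termwise differentiation in Step 2. I would avoid it by computing the Fourier coefficients of the weak derivative directly through integration by parts against smooth periodic test functions, rather than differentiating the series.
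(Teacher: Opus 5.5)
Your proof is correct and follows the Fourier-series route that the paper indicates; the paper omits the proof and cites Mei--Wong--Liu. Your Cauchy--Schwarz estimate anchored at the zero $x_0$ is the standard way to get the $C^\gamma$ bound. Reading the basis as $\sin(k\pi x),\cos(k\pi x)$ is also the right and necessary correction: only that family is $2$-periodic, orthonormal on $[-1,1]$, and consistent with the Poincar\'e constant $1/\pi$.
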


\

Next, we prove that the minimizer of the functional $G(V)$ is equivalent to a solution of the system \eqref{second-order ODE}.

\begin{lemma}\label{equivalent critical points} 
A function $V \in H^1_{\mathrm{per},0}([-1,1])$ solves system \eqref{second-order ODE} with constraint \eqref{constraint for V} if and only if it is a critical point of the functional $G(V)$ defined in \eqref{G}.
\end{lemma}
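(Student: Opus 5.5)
The plan is to compute the first variation of $G$ on the constrained space $H^1_{\mathrm{per},0}([-1,1])$ and identify the resulting weak Euler--Lagrange equation with \eqref{second-order ODE}, where $\sigma$ plays the role of the Lagrange multiplier for the zero-mean constraint \eqref{constraint for V}.

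\emph{Critical point $\Rightarrow$ solution.} Let $V\in H^1_{\mathrm{per},0}([-1,1])$ with $V+\bar v>b$; this holds pointwise because $V$ is continuous by Lemma \ref{poincare inequality}, and it is needed for $G$ to be defined. Take any test function $\phi\in H^1_{\mathrm{per},0}$. Since $H$ is smooth near the range of $V$, and $V+t\phi$ stays in $(b-\bar v,\infty)$ for small $|t|$, differentiation under the integral is justified and gives
\begin{equation*}
\frac{d}{dt}G(V+t\phi)\Big|_{t=0}=\int_{-1}^1\Big(\big(p(\bar v)-p(V+\bar v)\big)\phi+\varepsilon^2V_x\phi_x\Big)dx .
\end{equation*}
Now define $\sigma$ by \eqref{constraint for sigma}, and set $g:=p(\bar v)-p(V+\bar v)+\sigma$, which has zero mean. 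Any $\psi\in H^1_{\mathrm{per}}$ can be written as $\psi=\phi+c$ with $c$ its mean and $\phi\in H^1_{\mathrm{per},0}$. Because both $\int g\,dx=0$ and $\int V_x\,dx=0$, the constant $c$ contributes nothing. Vanishing of the first variation on $H^1_{\mathrm{per},0}$ therefore upgrades to
\begin{equation*}
\int_{-1}^1\big(g\,\psi+\varepsilon^2V_x\psi_x\big)dx=0\quad\text{for all }\psi\in H^1_{\mathrm{per}}([-1,1]).
\end{equation*}
This is exactly the weak form of $-\varepsilon^2V_{xx}=\big(p(V+\bar v)-p(\bar v)\big)-\sigma$ with periodic boundary conditions. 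The mean-zero condition \eqref{constraint for V} holds by membership in the space.

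\emph{Regularity bootstrap.} The right-hand side $p(V+\bar v)-p(\bar v)-\sigma$ is continuous, so the weak equation gives $V_x\in H^1$, hence $V\in C^2$. Iterating, $V\in C^\infty$. Periodicity of $V_x$ comes out of the weak formulation with periodic test functions, so $V$ is a classical solution of \eqref{second-order ODE}.

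\emph{Solution $\Rightarrow$ critical point.} Conversely, if $V$ solves \eqref{second-order ODE}, multiply by $\phi\in H^1_{\mathrm{per},0}$ and integrate by parts. The boundary terms cancel by periodicity, and the $\sigma$-term drops because $\int\phi=0$. What remains is exactly the vanishing of the first variation above. Note that here $\sigma$ is not arbitrary: integrating the ODE over $[-1,1]$ forces it to equal \eqref{constraint for sigma}, consistent with the first direction.

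\emph{Main obstacle.} This is a fairly routine argument, and I expect the only delicate point to be the role of the multiplier. One must check that testing only with mean-zero functions still recovers the full ODE with a constant $\sigma$, which is the decomposition $\psi=\phi+c$ above. One must also ensure that the admissible set $\{V+\bar v>b\}$ is open in $H^1$, which follows from the embedding into $C^{1/2}$ in Lemma \ref{poincare inequality}, so that Gâteaux differentiability holds.
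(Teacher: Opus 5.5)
Your proposal is correct and follows the same route as the paper: you obtain the weak form by testing with mean-zero periodic functions and integrating by parts, and in the converse direction you recover the ODE via the fundamental lemma of the calculus of variations. Your decomposition $\psi=\phi+c$ (which yields the constant $\sigma$ as the multiplier for the zero-mean constraint), the openness of $\{V+\bar v>b\}$, and the regularity bootstrap make explicit steps that the paper compresses into a single appeal to that lemma.
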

\begin{proof}
Assume $V(x)$ is a solution of system \eqref{second-order ODE} under constraint \eqref{constraint for V}. Then, for any test function $\eta(x) \in H_{\mathrm{per},0}^1([-1,1])$, multiplying equation \eqref{Approximate ns-lagrange}$_1$ by $\eta(x)$ and integrating  over $[-1, 1]$ yields
\begin{equation}\label{Variational form}
\int_{-1}^1\Big[\varepsilon^2 V_x\eta_x(x)+\big(p(\bar v) -p(V+\bar v)\big)\eta(x)\Big]dx=0,\quad\forall \eta(x) \in H_{\mathrm{per},0}^1([-1,1]),
\end{equation}
which is the variational form \eqref{Variational form}. This shows that $V(x)$ is a critical point of $G(V)$ over $H_{\mathrm{per},0}^1([-1,1])$. Conversely, suppose $V(x)$ is a critical point of $G(V)$ in $H_{\mathrm{per},0}^1([-1,1])$, i.e., it satisfies \eqref{Variational form} for all $\eta(x)\in H^1_{\mathrm{per},0}([-1,1])$ with periodic boundary condition $\eta(x-1)=\eta(x+1)$. Then, applying integration by parts and invoking the fundamental lemma of the calculus of variations, we conclude that   $V(x)$  satisfies \eqref{second-order ODE} together with the constraint \eqref{constraint for V}.
\end{proof}

\

The equivalence between the variational problem $G(V)$ and system \eqref{second-order ODE} under constraint \eqref{constraint for V} allows us to analyze multiplicity for \eqref{second-order ODE} via the minimizers of $G(V)$. We first consider the case of sufficiently large artificial viscosity, where the solution to \eqref{second-order ODE} is naturally unique. The specific conclusion is as follows:
\begin{lemma}\label{The existence of a trivial solution}
Let $\alpha$ and $\beta$ be as defined in Lemma \ref{properties of pressure}. Then, under the condition
\begin{equation}\label{uniqueness conition}
\varepsilon^2\pi^2 > \max_{v \in [\alpha, \beta]} p'(v),
\end{equation}
the steady-state system \eqref{Approximate ns-lagrange} with constraint \eqref{constraint for v,u} has $V(x) \equiv 0$ as its unique solution.
\end{lemma}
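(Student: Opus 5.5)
Suppose $V$ solves \eqref{second-order ODE} under \eqref{constraint for V}. The plan is to test the equation against $V_{xx}$ (equivalently, differentiate and test with $V_x$) and then invoke the Poincar\'e inequality of Lemma \ref{poincare inequality}.

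First, by standard ODE regularity, $V$ is smooth and periodic. Differentiating \eqref{second-order ODE}$_1$ in $x$ removes the constant $\sigma$:
\begin{equation*}
-\varepsilon^2 V_{xxx}=p'(V+\bar v)V_x .
\end{equation*}
Next, multiply by $V_x$ and integrate over $[-1,1]$. Periodicity kills the boundary terms, so integrating by parts once gives
\begin{equation*}
\varepsilon^2\int_{-1}^1 V_{xx}^2\,dx=\int_{-1}^1 p'(V+\bar v)V_x^2\,dx\le M\int_{-1}^1 V_x^2\,dx,\qquad M\xlongequal{\mathrm{def}}\max_{v\in[\alpha,\beta]}p'(v).
\end{equation*}
The inequality uses Lemma \ref{properties of pressure}: $p'\le 0$ outside $(\alpha,\beta)$ and $p'$ is positive only on $(\alpha,\beta)$. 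Hence $p'(v)\le \max(M,0)=M$ for every $v>b$, since $M>0$ because $p$ increases on $(\alpha,\beta)$.

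Then apply Lemma \ref{poincare inequality} to $f=V_x$. This is legitimate because $V_x$ is periodic with zero mean, so $V_x\in H^1_{\mathrm{per},0}$. It gives $\|V_x\|\le \frac1\pi\|V_{xx}\|$, and combining with the previous estimate,
\begin{equation*}
\varepsilon^2\pi^2\|V_x\|^2\le \varepsilon^2\|V_{xx}\|^2\le M\|V_x\|^2 .
\end{equation*}
Under \eqref{uniqueness conition} this forces $\|V_x\|=0$. So $V$ is constant, and \eqref{constraint for V} then gives $V\equiv0$. Conversely, $V\equiv0$ is indeed a solution, with $\sigma=0$ by \eqref{constraint for sigma}.

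The only delicate points are the regularity and the sign bound on $p'$. For regularity, $V$ stays inside $v>b$, so $p(V+\bar v)$ is smooth along the solution and we may differentiate. The sign bound must hold globally, not just on $[\alpha,\beta]$, and this is exactly where the shape of $p$ from Lemma \ref{properties of pressure} enters. An alternative route that avoids third derivatives is to use the second variation of $G$: show that $G$ is strictly convex on $H^1_{\mathrm{per},0}$, since $H''=-p'\ge -M$ and the Poincar\'e inequality gives $\varepsilon^2\|\eta_x\|^2\ge\varepsilon^2\pi^2\|\eta\|^2$. By Lemma \ref{equivalent critical points}, the critical point $V\equiv0$ would then be unique. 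I would present the differentiation argument as the main proof.
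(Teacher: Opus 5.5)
Your proof is correct. It uses the same energy-plus-Poincar\'e idea as the paper, carried out one derivative higher.

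The paper multiplies \eqref{second-order ODE} directly by $V$ and uses $\int_{-1}^1V\,dx=0$ to kill the $\sigma$-term. It then writes $p(\bar v)-p(V+\bar v)=-p'(\bar v+\varsigma V)V$ by the mean value theorem and applies Lemma \ref{poincare inequality} to $V$ itself, obtaining $0\ge(\varepsilon^2\pi^2-M)\|V\|^2$.

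You instead eliminate $\sigma$ by differentiating, test with $V_x$, and apply the Poincar\'e inequality to $V_x$, whose zero mean comes free from periodicity. This has two small advantages. First, $p'(V+\bar v)$ appears exactly, with no intermediate point. Second, the mass constraint \eqref{constraint for V} enters only at the end to fix the constant. Your computation in fact shows that, under the same condition on $\varepsilon$, every $2$-periodic solution of $-\varepsilon^2V_{xx}=p(V+\bar v)-c$ is constant, for any constant $c$.

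The price is that you need $V\in C^3$. That is harmless here by bootstrapping, but the paper's test against $V$ works directly at the $H^1$ level, on the critical points of $G$ supplied by Lemma \ref{equivalent critical points}.

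Your fallback via strict convexity of $G$ is essentially the paper's proof in another form. The paper's key inequality is just $\langle G'(V)-G'(0),V\rangle\ge(\varepsilon^2\pi^2-M)\|V\|^2$, since $G'(0)=0$.

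Both routes need $p'\le M$ on all of $(b,\infty)$, not only on $[\alpha,\beta]$. You justify this explicitly from Lemma \ref{properties of pressure}. The paper uses it tacitly when it bounds $p'(\bar v+\varsigma V)$, whose argument need not lie in $[\alpha,\beta]$.
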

\begin{proof}By multiplying the equation \eqref{second-order ODE} by  $V(x)$ and integrating by parts over $x\in[-1,1]$, combined with Lemma \ref{poincare inequality} and \eqref{uniqueness conition}, we derive
\begin{equation}\label{inequality for solution}
\left.\begin{array}{llll}
  0&=&\displaystyle \int_{-1}^1\Big(\varepsilon^2 V_x^2+\big(\sigma+p(\bar v) -p(V+\bar v)\big)V(x)\Big)dx\\
  &=&\displaystyle\int_{-1}^1\Big(\varepsilon^2 V_x^2+\big(p(\bar v) -p(V+\bar v)\big)V(x)\Big)dx\\
  &\geq&\displaystyle\int_{-1}^1\Big(\varepsilon^2\pi^2 -p'(\bar v+\varsigma V)\Big)V^2dx\\
  &\geq&\displaystyle\int_{-1}^1\Big(\varepsilon^2\pi^2 -\max_{v\in[\alpha,\beta]}p'(v)\Big)V^2dx\geq0,
    \end{array}
  \right.
  \end{equation}
where $0<\varsigma<1$, and thus, \eqref{inequality for solution} implies that 
\begin{equation}\label{trivial solution}
  \int_{-1}^1V^2(x)dx=0,
\end{equation}
and consequently $V \equiv 0$.  This completes the proof of Lemma \ref{existence of trivial solution}.
\end{proof}

\

Based on the above Lemma \ref{The existence of a trivial solution}, we conclude that for sufficiently large artificial viscosity, the steady-state system \eqref{Approximate ns-lagrange} under constraint \eqref{constraint for v,u} admits a unique trivial solution, corresponding physically to the absence of phase transition. A natural question then arises: if the artificial viscosity is reduced, for instance, to a sufficiently small value, does the system exhibit non-trivial solutions implying the occurrence of phase transition? This question is addressed by the following lemma.

\begin{lemma}
There exists $\varepsilon_0 > 0$ such that for any $\bar{v} \in \Omega_{\mathrm{unstable}}=(\alpha, \beta)$ and every $0 < \varepsilon < \varepsilon_0$, the steady-state system \eqref{Approximate ns-lagrange} admits at least one nonconstant solution.
\end{lemma}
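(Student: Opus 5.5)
The plan is to obtain the nonconstant solution as a \emph{global minimizer} of $G$ in \eqref{Pe}. Since $G(0)=0$, it suffices to exhibit a competitor $V_\varepsilon\in H^1_{\mathrm{per},0}([-1,1])$ with $G(V_\varepsilon)<0$, with a threshold $\varepsilon_0$ that does not depend on $\bar v\in(\alpha,\beta)$. A minimizer then satisfies $G\le G(V_\varepsilon)<0=G(0)$, so it is nonconstant (the only constant in $H^1_{\mathrm{per},0}$ is $0$). By Lemma \ref{equivalent critical points} it solves \eqref{second-order ODE}, and setting $v=V+\bar v$ and recovering $u$ from \eqref{Approximate ns-lagrange}$_1$ gives a nonconstant solution of \eqref{Approximate ns-lagrange}. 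Uniformity in $\bar v$ comes from the Maxwell gap, not from the linear growth rate $p'(\bar v)$. The latter degenerates at $\alpha,\beta$, which is why an eigenvalue argument alone cannot give one $\varepsilon_0$.

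\textbf{Step 1: a uniform energy gap.} Take the two-phase profile $V^0=(\alpha_0-\bar v)\chi_{S_1}+(\beta_0-\bar v)\chi_{S_2}$, with $|S_1|=l_1$ and $|S_2|=l_2$ as in \eqref{li}, and $S_1$ an interval. Then $\int_{-1}^1 V^0\,dx=0$. Since $\int V^0=0$, the linear term of $H$ integrates to zero, so
\[
\int_{-1}^1 H(V^0)\,dx=\int_{-1}^1 W(V^0+\bar v)\,dx
= l_1W(\alpha_0)+l_2W(\beta_0)-2W(\bar v)=:-2D(\bar v).
\]
By the Maxwell conditions \eqref{Maxwell condition}, the chord of $W$ through $(\alpha_0,W(\alpha_0))$ and $(\beta_0,W(\beta_0))$ is the lower convex envelope of $W$ on $[\alpha_0,\beta_0]$, and it lies strictly below $W$ on $(\alpha_0,\beta_0)$. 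Hence $D(\bar v)>0$ there. Because $D$ is continuous and $[\alpha,\beta]\subset(\alpha_0,\beta_0)$ is compact, we get $D_*:=\min_{[\alpha,\beta]}D>0$. Moreover $l_1,l_2$ are bounded below by a positive constant $c_*$ uniformly for $\bar v\in[\alpha,\beta]$.

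\textbf{Step 2: smoothing at cost $O(\delta)+O(\varepsilon^2/\delta)$.} Replace the two jumps of $V^0$ by linear ramps of width $\delta<c_*/4$. Then shift one endpoint of $S_1$ by $O(\delta)$ to restore mean zero; the shift amount depends continuously on $\bar v$. On the ramps, $V+\bar v\in[\alpha_0,\beta_0]$, where $|H|\le M$ with $M$ uniform in $\bar v\in[\alpha,\beta]$. The resulting $V_{\varepsilon}$ therefore satisfies
\[
G(V_\varepsilon)\le -2D_*+C_1\delta+\frac{C_2\varepsilon^2}{\delta},\qquad C_2=(\beta_0-\alpha_0)^2 .
\]
Fix $\delta=\min\{c_*/4,\,D_*/(2C_1)\}$ and then $\varepsilon_0^2=D_*\delta/(2C_2)$. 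For all $\varepsilon<\varepsilon_0$ and all $\bar v\in(\alpha,\beta)$ this gives $G(V_\varepsilon)\le -D_*<0$.

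\textbf{Step 3: existence of a minimizer that is a genuine critical point.} This is the main obstacle. $H$ is bounded below on $(b-\bar v,\infty)$, because $H\to+\infty$ at both ends: logarithmically as $V+\bar v\downarrow b$, and like $p(\bar v)V$ as $V\to\infty$. Together with Lemma \ref{poincare inequality}, this makes $G$ coercive on $H^1_{\mathrm{per},0}$. Weak lower semicontinuity follows from the compact embedding into $C^0$ and Fatou, so a minimizer exists. The difficulty is that the logarithmic singularity is integrable, so finiteness of $G$ alone does not prevent $v=V+\bar v$ from touching $b$. In that case the Euler--Lagrange equation could fail there.

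The approach is to first minimize a modified functional in which $H$ is replaced, for $v\le\bar\alpha/2+b/2$, by a convex $C^2$ extension $\tilde H$ with $\tilde H'$ bounded. This keeps Step 2 unchanged, since the competitor stays in $[\alpha_0,\beta_0]$. Its minimizer $\tilde V$ is a $C^2$ solution of $-\varepsilon^2\tilde V_{xx}=-\tilde H'(\tilde V)-\sigma$, with $\sigma$ given by the analogue of \eqref{constraint for sigma}. At the minimum point of $\tilde V$ we have $\tilde V_{xx}\ge0$, hence $-\tilde H'(\tilde V_{\min})\le\sigma$. At the maximum point the reverse inequality holds.

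Since $\tilde V$ is continuous with mean zero, these two inequalities force $\sigma+p(\bar v)$ into the range of $p$ evaluated between $\min v$ and $\max v$. Combined with the energy bound $G(\tilde V)<0$, this should confine $\sigma+p(\bar v)$ to a bounded set. Using $\tilde H'=p(\bar v)-p$ away from the modified region and $p(v)\to+\infty$ as $v\downarrow b$, the minimum inequality then shows that $\min v$ cannot enter the modified region. So $\tilde V$ solves the original problem, and the conclusion of Steps 1--2 gives the required nonconstant solution for every $0<\varepsilon<\varepsilon_0$ and every $\bar v\in\Omega_{\mathrm{unstable}}$.

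If this maximum-principle bound on $\sigma$ proves delicate, the fallback is a direct a priori estimate. Multiply the equation by $\tilde V_x$ to get the first integral $\tfrac{\varepsilon^2}{2}\tilde V_x^2+\tilde H(\tilde V)+\sigma\tilde V=\text{const}$, and bound the constant using $G(\tilde V)<0$.
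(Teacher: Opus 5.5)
Your argument is correct in substance, but it takes a genuinely different route from the paper.

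\textbf{Comparison with the paper.} The paper also obtains a minimizer by the direct method. It certifies that the minimizer is nonconstant using a small-amplitude test function, $\varepsilon_1$ times the first Fourier mode. Because $p'>0$ near $\bar v$, $H$ is concave at $0$, so $G<0$ on that test function once $\varepsilon^2$ is small compared with $\min_{[\bar v-\varepsilon_1,\bar v+\varepsilon_1]}p'$.

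That is a short linear-instability argument. It ties directly to Remark~\ref{linearized analysis} and to the threshold in Lemma~\ref{The existence of a trivial solution}, and it shows the constant state is not even a local minimizer. However, its threshold depends on $\bar v$ and tends to $0$ as $\bar v\to\alpha$ or $\beta$. So it does not deliver the $\bar v$-uniform $\varepsilon_0$ that the statement asserts.

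Your smoothed Maxwell two-phase competitor replaces local concavity with the global convex-hull gap $D(\bar v)>0$. This gives an $\varepsilon_0$ uniform on $[\alpha,\beta]$, and in fact on any compact subset of $(\alpha_0,\beta_0)$. It therefore also covers the metastable range, where $0$ is a strict local minimizer and the paper's test function cannot work. What you lose is the information that the constant state is linearly unstable.

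Your Step~3 also addresses two points the paper passes over. First, why $G$ is bounded below: the real reason is that $H\to+\infty$ at both ends, as you say. Second, why the minimizer stays away from $v=b$, which is needed before Lemma~\ref{equivalent critical points} can be applied.

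\textbf{The end of Step~3.} As written it is looser than necessary and slightly off. If $\min v$ lies in the modified region, the minimum-point inequality involves $\tilde H'$, not $p$. So neither the ``range of $p$'' claim nor $p\to+\infty$ at $b$ closes the argument. The clean version uses only your two inequalities:
\begin{itemize}
\item Since $\max\tilde V\ge 0$, we have $v_{\max}\ge\bar v>\bar\alpha$. The maximum-point inequality then gives $\sigma+p(\bar v)\le p(v_{\max})\le\overline\sigma$.
\item Suppose $v_{\min}\le v_c:=(\bar\alpha+b)/2$. Convexity of $\tilde H$ gives $-\tilde H'(\tilde V_{\min})\ge p(v_c)-p(\bar v)$. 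The minimum-point inequality then yields $\sigma+p(\bar v)\ge p(v_c)>p(\bar\alpha)=\overline\sigma$, a contradiction.
\end{itemize}
No energy bound and no fallback are needed.

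\textbf{A caveat you share with the paper.} Recovering $u$ from \eqref{Approximate ns-lagrange}$_1$ gives $u(1)-u(-1)=\varepsilon^2\int_{-1}^1 v_x^2\,dx>0$ for any nonconstant periodic $v$, so $u$ is not periodic. What both arguments actually produce is a nonconstant solution of the reduced problem \eqref{second-order ODE}.
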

\begin{proof}
We begin the proof by considering the variational problem \eqref{Pe} which equivalent to system of equations \eqref{second-order ODE}.
First, we need to prove that the functional $G(V)$ \eqref{G} has a lower bound. In fact, 
\begin{equation*}
\left.\begin{array}{llll}
\displaystyle G(V)&=&\displaystyle \int_{-1}^1\Big(H(V)+\frac{\varepsilon^2}2V_{x}^2\Big)dx\\
\displaystyle &=& \displaystyle\int_{-1}^1 \Big(\big[p(\bar v)-p(\bar v+\eta_1 V)\big]V+\frac{\varepsilon^2}2V_x^2\Big)dx\\
\displaystyle &=& \displaystyle \int_{-1}^1 \Big(-\eta_1 p'(\bar v+\eta_1\eta_2V)V^2+\frac{\varepsilon^2}2V_x^2\Big)dx\\
\displaystyle &\geq& \displaystyle\delta\int_{-1}^1 \frac{\varepsilon^2}2V_x^2dx-\frac1\delta\Big(\max_{v\in[\alpha,\beta]}p'(v)\Big)^2\\
\displaystyle &\geq&\displaystyle  -\frac1\delta\Big(\max_{v\in[\alpha,\beta]}p'(v)\Big)^2,
\end{array}\right.
\end{equation*}
where $0<\eta_1,\eta_2<1,\delta>0$ are  positive constants, and the lower boundedness of the functional $G(V)$  \eqref{G} has thus been proved. 

Next, since the functional $G$ has a lower bound, it must have an infimum. According to the definition of infimum,  there must exist a sequence a minimizing sequence $V_n(x)$  for the functional $G$ defined in \eqref{G},   such that
\begin{equation}\label{minimizing sequence}
  G(V_n)\rightarrow g = \inf_{V\in H_{\mathrm{per},0}^1(\mathbb{R})}G(V).
\end{equation}
Since  $V_n$ is bounded in $H^1_{\mathrm{per},0}$, by using Sobolev embedding theorem,  there exists a subsequence (still denoted by $V_n$), such that
\begin{equation}\label{Convergence of the minimal sequence}\left.\begin{array}{llll}
\displaystyle  V_n\rightharpoonup\bar V, \qquad\mathrm{weakly\ in}\quad H^1_{\mathrm{per},0}([-1,1]),\\
\displaystyle  V_n\rightarrow\bar V, \qquad\mathrm{strongly\ in}\quad C_{\mathrm{per}}^\gamma([-1,1]), \ 0<\gamma\leq1,
\end{array}\right.
\end{equation}
which implies that
\begin{equation}\label{Integral Inequality-1}
\left.\begin{array}{llll}
\displaystyle  \int_{-1}^1 \bar V_x^2dx\leq\liminf_{n\rightarrow +\infty}\int_{-1}^1 |(V_n)_x|^2dx,\\
\displaystyle \int_{-1}^1\frac1{V_n(x)+\bar v}dx\rightarrow\int_{-1}^1\frac1{\bar V(x)+\bar v}dx,
\end{array}\right.\end{equation}
and
\begin{equation}\label{Integral Inequality-2}
\left.\begin{array}{llll}
\displaystyle 0&=&\displaystyle-\lim_{n\rightarrow +\infty}\ln\frac{1}{2}\int_{-1}^1\big(\frac{V_n(x)+\bar v-b}{\bar V(x)-\bar v-b}\big)dx\\
\displaystyle&\leq&\displaystyle-\lim_{n\rightarrow +\infty}\int_{-1}^1\frac12\ln\frac{V_n(x)+\bar v-b}{\bar V(x)+\bar v-b}dx\\
\displaystyle&=&\displaystyle-\lim_{n\rightarrow +\infty}\int_{-1}^1\frac12\ln\big(V_n(x)+\bar v-b\big)dx+\int_{-1}^1\frac12\ln\big(\bar V(x)+\bar v-b\big)dx,
\end{array}\right.\end{equation}
then applying the definition of  $G$ \eqref{G}, one has
\begin{equation*}
  G(\bar V)\leq G(V_n).
\end{equation*}
From \eqref{minimizing sequence} and \eqref{Integral Inequality-1}--\eqref{Integral Inequality-2}, it follows that $\bar V(x)$ is a minimum element of the functional $G$,
and by Lemma \ref{equivalent critical points},  it is also a solution to the stationary problem \eqref{Approximate ns-lagrange}. 
 
Finally, it remains to prove that $G(V) < 0$, implying the solution $\bar V (x)\not\equiv 0$ is nontrivial. Let $ \lambda > 0$ denote the first nonzero eigenvalue of the following periodic boundary eigenvalue problem
\begin{equation}\label{the first eigenvalue}
\left\{\begin{array}{llll}
\displaystyle
  -v_{xx}=\lambda v,& x\in\mathbb{R},\\
 \displaystyle v(x-1)=v(x+1),& x\in\mathbb{R},
 \end{array}
 \right.  
\end{equation}
and $\sin\frac  x\pi$ be one of the corresponding eigenfunctions such that $\|\sin\frac x\pi\|=1$. Since $\bar v \in (\alpha,\beta)$, we can choose $\varepsilon_1 > 0$ sufficiently small so that $[\bar v-\varepsilon_1,\bar v+\varepsilon_1] \subset (\alpha,\beta)$. Then we have the following inequality:
\begin{equation}\label{Minimum value of G}
\left.\begin{array}{llll}
\displaystyle G\big(\varepsilon_1\sin\frac x\pi\big)&=&\displaystyle \int_{-1}^1\Big(H(\varepsilon_1\sin\frac x\pi)+\frac{\varepsilon_1^2\varepsilon^2}{2\pi^2}\cos^2\frac x\pi\Big)dx\\
\displaystyle &=& \displaystyle\varepsilon_1\int_{-1}^1 \Big(\big[p(\bar v)-p(\bar v+\varepsilon_1\eta_1 \sin\frac  x\pi)\big]\sin\frac  x\pi\Big) dx+\frac{\varepsilon^2_1\varepsilon^2}{2\pi^2}\\
\displaystyle &\leq& \displaystyle \varepsilon^2_1\Big(-\eta_1 \min_{\zeta\in[\bar v-\varepsilon_1,\bar v+\varepsilon_1]}p'(\zeta)+\frac{\varepsilon^2}{2\pi^2}\Big),
\end{array}\right.
\end{equation}
where $0<\eta_1,\eta_2<1$ are the positive constants. 
Having fixed this $\varepsilon_1$ in \eqref{Minimum value of G}, we now choose $\varepsilon$ sufficiently small so that $G(\varepsilon_1 \sin \frac{x}{\pi}) < 0$, which in turn implies $G(\bar V) < 0$.

\end{proof}

\

This completes the proof of Theorem \ref{existence of trivial solution}. These results show that the multiplicity of solutions is jointly determined by the artificial viscosity coefficient and the relative position of $\bar v$ with respect to the Maxwell region $\Omega_{\mathrm{Maxwell}} = (\alpha_0, \beta_0)$.

\

\subsection{Proof of Theorem 1.3. }\label{Proof-2-3}
\ \ \ \ As shown in the preceding analysis in Section \ref{Proof-2-2}, the system \eqref{Approximate ns-lagrange} admits multiple steady-state solutions when $\bar v$ lies in the unstable region $[\alpha, \beta]$ and the artificial viscosity $\varepsilon$ is sufficiently small. By the equivalence between the constrained system \eqref{second-order ODE} and the variational problem \eqref{G}, the physically relevant solution corresponds to the minimizer of the energy functional. Guided by this principle, we introduced in Section \ref{sec:int} the notion of two-smooth-interface phase transition solution. In what follows, we carry out a detailed study of such  solutions and present a complete proof of Theorem \ref{thm:approximation}. In fact, Theorem \ref{thm:approximation} will be established through Proposition \ref{prop:two-interface-phase-transition}--\ref{unique}.

We first restrict attention to the energy-minimizing solutions among those obtained in section \ref{Proof-2-2} and analyze their properties. For this purpose, we introduce a scaling transformation for system \eqref{Approximate ns-lagrange} defined as follows:
\begin{equation}\label{scaling}
  y=\frac{x}{\varepsilon}.
\end{equation}
Thus, under the scaling transformation \eqref{scaling}, the system \eqref{Approximate ns-lagrange} reduces, via the antiderivative method, to
\begin{equation}\label{ODE under scaling transformation}
\left\{\begin{array}{llll}
\displaystyle -\frac{d^2v}{dy^2}=p(v)-\sigma,&&y\in\mathbb{R}, \\
\displaystyle  v(y-\frac{1}{\varepsilon})=v(y+\frac{1}{\varepsilon}),&& y\in\mathbb{R},\\
\displaystyle \int_{-\frac{1}{\varepsilon}}^{\frac{1}{\varepsilon}}vdy=\frac{2\bar v}{\varepsilon}.
\end{array}\right.
\end{equation} 
where $\sigma$ (= constant) is a Lagrange multiplier.

We proceed to a phase plane $(v,v')$ analysis of system \eqref{ODE under scaling transformation}. The first integral may be derived by multiplying system \eqref{ODE under scaling transformation}$_1$ by $v_y$, giving:
\begin{equation}\label{first integral}
\frac{1}{2}\Big(\frac{dv}{dy}\Big)^2=W (v)+\sigma v-\lambda \xlongequal{\mathrm{def}}f_{\sigma,\lambda}(v),
\end{equation}
where $\lambda$ is a constant and $W(v)$ defined as \eqref{W}. The periodic boundary value condition \eqref{ODE under scaling transformation}$_2$ then implies
\begin{equation}\label{Two-point boundary value condition}
  \left\{\begin{array}{llll}
  \displaystyle\big(W (v(y))+\sigma v(y)\big)\Big|_{y=-\frac{1}{\varepsilon}}=\big(W (v(y))+\sigma v(y)\big)\Big|_{y=\frac{1}{\varepsilon}}=\lambda,\\
  \displaystyle v\Big|_{y=-\frac{1}{\varepsilon}}=v\Big|_{y=\frac{1}{\varepsilon}}.
  \end{array}\right.
\end{equation}
\begin{figure}
\centering
\subfigure[\scriptsize{$\sigma>\sigma_0$}]
{
\begin{minipage}[t]{0.3\textwidth}
\centering
\includegraphics[width=1.4 in,height=1.7 in]{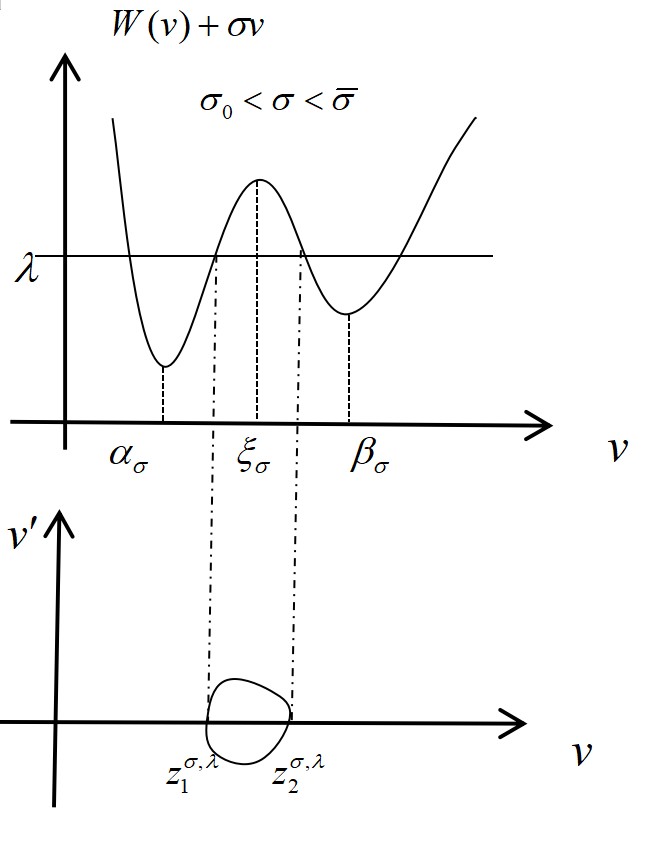}
\end{minipage}}
\hfill\centering
\subfigure[\scriptsize{$\sigma=\sigma_0$ }]{
\begin{minipage}[t]{0.3\textwidth}
\centering
\includegraphics[width=1.2 in,height=1.7 in]{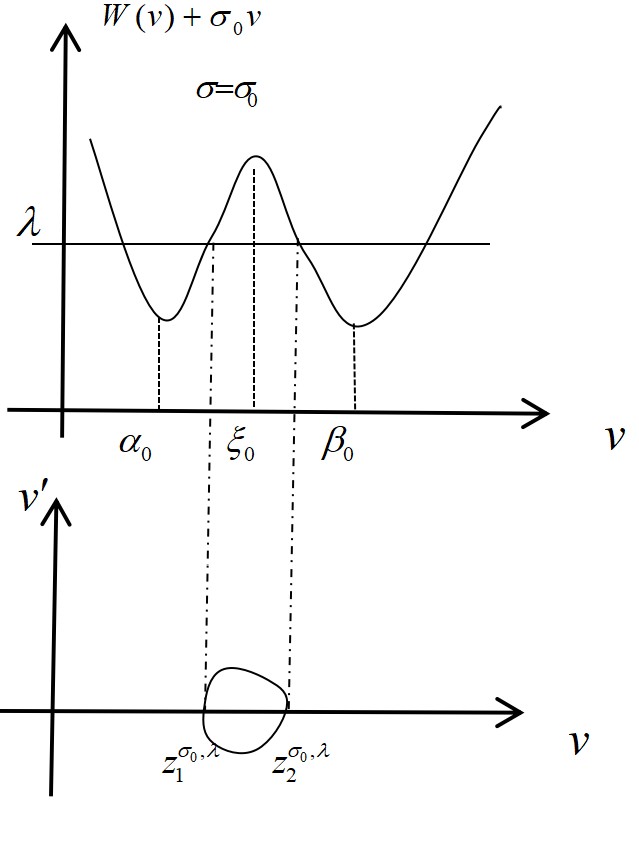}
\end{minipage}}
\subfigure[\scriptsize{$\sigma<\sigma_0$ }]{
\begin{minipage}[t]{0.3\textwidth}
\centering
\includegraphics[width=1.2 in,height=1.7 in]{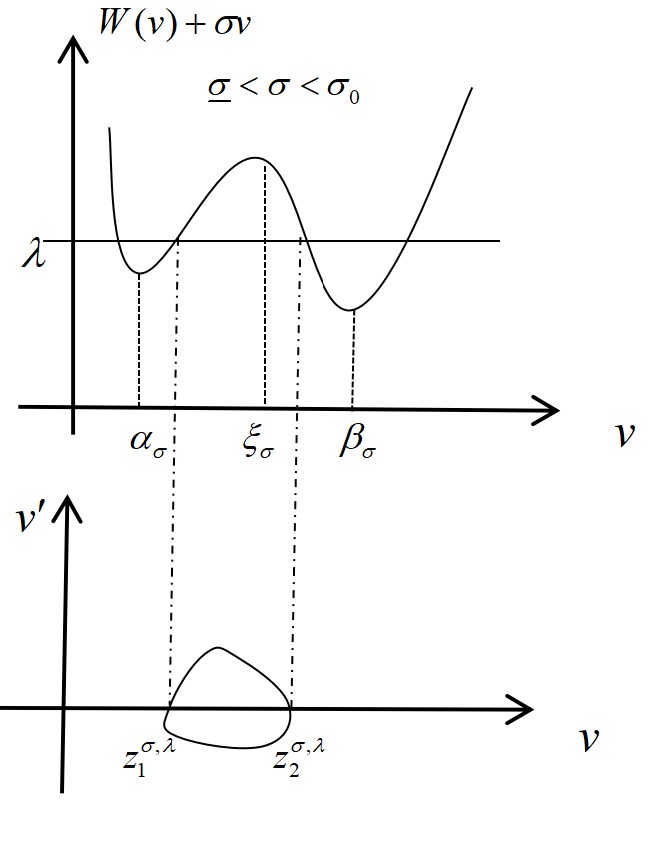}
\end{minipage}}\caption{image of $W(v)+\sigma v$ and phase plane of $v$--$v'$.}
\label{model2figure1}
\end{figure}

The relationship between $v$ and $v'$  is given by equation \eqref{first integral}.   To establish the existence of periodic solutions for the steady-state system \eqref{Approximate ns-lagrange} via the phase plane method, we first examine several properties of the functional $W(v)+\sigma v$, which can be derived using basic calculus. These properties are summarized briefly as follows.

\begin{lemma}\label{properties of w} Assume the predetermined fixed temperature is in the subcritical state \eqref{Critical temperature condition}. Then for each $\sigma \in (\underline{\sigma}, \overline{\sigma})$, the function $W(v)+\sigma v$ satisfies the following:
  \begin{enumerate}

\item[(i)] It has three positive critical points $\beta_\sigma > \xi_\sigma > \alpha_\sigma > b$, where $\beta_\sigma$ and $\alpha_\sigma$ are local minima, $\xi_\sigma$ is a local maximum, and
\begin{equation}\label{extremum points of W-lv}
p(\beta_\sigma) = p(\xi_\sigma) = p(\alpha_\sigma) = \sigma.
\end{equation}
    
 \item[(ii)] Its derivative satisfies the monotonicity conditions: 
 \begin{equation}\label{monotinicity of W}
 \left\{\begin{array}{llll}
 \displaystyle  \big(W(v)+\sigma v\big)'<0, \qquad  \mathrm{on}\ (b,\alpha_\sigma)\cup(\xi_\sigma,\beta_\sigma),\\
  \displaystyle  \big(W(v)+\sigma v\big)'>0,\qquad \mathrm{on}\ (\alpha_\sigma,\xi_\sigma)\cup(\beta_\sigma,+\infty).
 \end{array}\right.
 \end{equation}

 \item[(iii)] For $\sigma \in [\underline{\sigma}, \overline{\sigma}]$, the values at the local minima are ordered as follows: 
 \begin{equation}\label{properity-3 of W}
 \left\{\begin{array}{llll}
   \displaystyle W(\beta_\sigma)+\sigma \beta_\sigma>W(\alpha_\sigma)+\sigma \alpha_\sigma, &\sigma_0<\sigma<\overline{\sigma}, \\ 
   \displaystyle W(\beta_{\sigma_0})+\sigma_0 \beta_{\sigma_0}=W(\alpha_{\sigma_0})+\sigma_0 \alpha_{\sigma_0}, & \sigma=\sigma_0,\\
   \displaystyle W(\beta_\sigma)+\sigma \beta_\sigma<W(\alpha_\sigma)+\sigma \alpha_\sigma,& \underline{\sigma}<\sigma<\sigma_0.
\end{array}\right.
\end{equation}

 \end{enumerate}
\end{lemma}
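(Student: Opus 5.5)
The plan is to reduce everything to the shape of the pressure isotherm in Lemma \ref{properties of pressure}, using the identity \eqref{derivative of W}, which gives $\big(W(v)+\sigma v\big)'=\sigma-p(v)$ and $\big(W(v)+\sigma v\big)''=-p'(v)$.

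For (i), I fix $\sigma\in(\underline\sigma,\overline\sigma)$. By Lemma \ref{properties of pressure}(v), the equation $p(v)=\sigma$ has exactly three roots $\alpha_\sigma<\xi_\sigma<\beta_\sigma$. These are therefore the only critical points of $W+\sigma v$ on $(b,\infty)$. Because $p$ is strictly monotone on each of $(b,\alpha)$, $(\alpha,\beta)$, $(\beta,\infty)$, and $\underline\sigma<\sigma<\overline\sigma$, each of these intervals contains exactly one root. So $\alpha_\sigma\in(b,\alpha)$, $\xi_\sigma\in(\alpha,\beta)$ and $\beta_\sigma\in(\beta,\infty)$. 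The second derivative $-p'$ is positive at $\alpha_\sigma$ and $\beta_\sigma$, where $p$ is strictly decreasing, and negative at $\xi_\sigma$. This gives two local minima and one local maximum. If $p'$ vanishes only at $\alpha,\beta$ the strict sign is immediate; otherwise I will read off the type from the sign change in (ii).

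For (ii), I determine the sign of $\sigma-p(v)$ on each interval from monotonicity and the limits of $p$. As $v\to b^+$, $p\to+\infty$, so $\sigma-p<0$ on $(b,\alpha_\sigma)$. On $(\alpha_\sigma,\alpha]$, $p$ decreases below $\sigma$. On $[\alpha,\xi_\sigma)$, $p$ increases from $\underline\sigma$ up to $\sigma$. Together these give $\sigma-p>0$ on $(\alpha_\sigma,\xi_\sigma)$. In the same way $p>\sigma$ on $(\xi_\sigma,\beta_\sigma)$, since $p$ rises to $\overline\sigma$ at $\beta$ and then decreases to $\sigma$. Finally $p<\sigma$ on $(\beta_\sigma,\infty)$, because $p$ decreases to $0^+$ (Lemma \ref{properties of pressure}(i)). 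This also confirms the min/max/min pattern in (i).

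For (iii), the key object is
\[
D(\sigma)\xlongequal{\mathrm{def}}\big(W(\beta_\sigma)+\sigma\beta_\sigma\big)-\big(W(\alpha_\sigma)+\sigma\alpha_\sigma\big)=\int_{\alpha_\sigma}^{\beta_\sigma}\big(\sigma-p(s)\big)\,ds.
\]
By the implicit function theorem, $\alpha_\sigma$ and $\beta_\sigma$ depend $C^1$ on $\sigma$, since $p'\neq0$ there. Differentiating, the boundary terms vanish because $p(\alpha_\sigma)=p(\beta_\sigma)=\sigma$, and so $D'(\sigma)=\beta_\sigma-\alpha_\sigma>0$. Hence $D$ is strictly increasing. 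At $\sigma=\sigma_0=p(\alpha_0)=p(\beta_0)$ we have $\alpha_{\sigma_0}=\alpha_0$ and $\beta_{\sigma_0}=\beta_0$, and the Maxwell rule \eqref{Maxwell equal area rule} gives $D(\sigma_0)=0$. Strict monotonicity then yields $D>0$ for $\sigma>\sigma_0$ and $D<0$ for $\sigma<\sigma_0$, which is exactly \eqref{properity-3 of W}.

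To cover the closed endpoints $\underline\sigma$ and $\overline\sigma$, I would extend by continuity with degenerate roots, for example $\xi_{\overline\sigma}=\beta_{\overline\sigma}=\beta$. The strict inequalities survive at these endpoints because $D$ is continuous and strictly increasing on the closed interval.

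The main obstacle is justifying that the labelling in Lemma \ref{properties of pressure}(v) matches the intervals $(b,\alpha)$, $(\alpha,\beta)$, $(\beta,\infty)$, and confirming $\sigma_0\in(\underline\sigma,\overline\sigma)$. The latter I would deduce from $D(\underline\sigma)<0<D(\overline\sigma)$: in each endpoint case the integrand has a definite sign on the integration interval apart from a set of measure zero. By the intermediate value theorem this again gives the unique Maxwell value $\sigma_0$.
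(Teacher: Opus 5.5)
Your proposal is correct and follows the paper's proof. Parts (i)--(ii) are read off from $(W+\sigma v)'=\sigma-p(v)$ and the shape of the isotherm. Part (iii) comes from $\frac{d}{d\sigma}\big[(W(\beta_\sigma)+\sigma\beta_\sigma)-(W(\alpha_\sigma)+\sigma\alpha_\sigma)\big]=\beta_\sigma-\alpha_\sigma>0$ together with the Maxwell condition at $\sigma_0$, which is the paper's identity up to sign. Your additional details (the representation $D(\sigma)=\int_{\alpha_\sigma}^{\beta_\sigma}(\sigma-p(s))\,ds$, the implicit-function justification of differentiability, and the check $D(\underline\sigma)<0<D(\overline\sigma)$ that locates $\sigma_0$) just fill in steps the paper leaves implicit.
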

\begin{proof}
The critical points $\beta_\sigma$, $\xi_\sigma$, and $\alpha_\sigma$ of $W(v)+\sigma v$ are determined by solving $W'(v) =- \sigma$. Substituting the expression of $W(v)$ from \eqref{W} into this equation yields the characterization of the extremum points given in \eqref{extremum points of W-lv}. Statements (i) and (ii) then follow directly from the properties of $p(v)$.
To show (iii), consider the identity
\begin{equation}\label{monotonicity of W}
\frac{d}{d\sigma}\left[\big( W(\alpha_\sigma) + \sigma \alpha_\sigma\big) - \left(W(\beta_\sigma) + \sigma \beta_\sigma\right)\right] =-\big( \beta_\sigma - \alpha_\sigma\big)<0.
\end{equation}
Together with \eqref{Maxwell condition}, this establishes the desired inequalities, thereby completing the proof of lemma \ref{properties of w}.
\end{proof}

\


Because of the periodic boundary condition \eqref{ODE under scaling transformation}$_2$, our problem reduces to finding a 
limit cycle  which has 
duration $\frac{2}{\varepsilon}$. Thus nonconstant periodic solutions of \eqref{ODE under scaling transformation}$_2$ are  possible only for $\sigma\in[\underline{\sigma},\overline{\sigma}]$. We therefore restrict our attention to this interval. Further, it is clear  that for such $\sigma$, the constant $\lambda$ will have the range
\begin{equation}\label{range of sigma and lambda}
\left\{\begin{array}{llll}
 \displaystyle W(\beta_\sigma)+\sigma \beta_\sigma<\lambda<W(\xi_{\sigma})+\sigma \xi_{\sigma}, &\displaystyle \sigma_0<\sigma< \overline{\sigma},\\
\displaystyle W(\alpha_{\sigma})+\sigma \alpha_{\sigma}<\lambda<W(\xi_\sigma)+\sigma \xi_\sigma, &\displaystyle\underline{\sigma}<\sigma\leq\sigma_0.
\end{array}\right.
\end{equation}
Therefore, given fixed parameters $\varepsilon > 0$ and $\bar{v} \in (b, +\infty)$, we now specify the domain of $\lambda$ and $\sigma$:
\begin{equation}\label{the set of admissible pair}
D^{\varepsilon} = \Big\{ (\sigma, \lambda) \Big| \underline{\sigma} < \sigma < \overline{\sigma},\ \lambda\ \text{satisfies\ } \eqref{range of sigma and lambda} \Big\}.
\end{equation}
The boundary of $D^{\varepsilon}$, denoted $\partial D^\varepsilon$, decomposes into three mutually disjoint subsets:
\begin{equation}\label{boundary of the set}
  \partial D^{\varepsilon}=\bigcup_{i=1}^3\Gamma_i^{\varepsilon},
\end{equation}
where
\begin{equation}\label{Gamma-i}\left.\begin{array}{llll}
 \displaystyle \Gamma^{\varepsilon}_1=\big\{(\sigma,\lambda)\big|\lambda=W(\beta_\sigma)+\sigma \beta_\sigma,\sigma_0\leq\sigma<\bar\sigma\big\},\\
 \displaystyle \Gamma^{\varepsilon}_2=\big\{ (\sigma,\lambda)\big|\lambda=W(\alpha_{\sigma})+\sigma \alpha_{\sigma},\underline\sigma\leq\sigma<\sigma_0\big\},\\
 \displaystyle \Gamma^{\varepsilon}_3=\big\{(\sigma,\lambda)\big|\lambda=W(\xi_{\sigma})+\sigma \xi_{\sigma},\underline\sigma\leq\sigma\leq\bar\sigma\big\}.
\end{array}\right.\end{equation}

By Lemma \ref{properties of w}, for each  $(\sigma,\lambda)\in D^{\varepsilon}$, there exist exactly two points, $z_1^{\sigma,\lambda} \in (\alpha_{\sigma}, \xi_\sigma)$ and $z_2^{\sigma,\lambda} \in (\xi_\sigma, \beta_\sigma)$, which are the two roots of the following equation:
\begin{equation}
f_{\sigma,\lambda}(v) = W(v) + \sigma v - \lambda = 0.
\end{equation}

 For $\alpha_0 < \bar v < \beta_0$, all periodic solutions of \eqref{first integral} correspond to closed orbits. Furthermore, when the boundary conditions in \eqref{Two-point boundary value condition} are satisfied, each solution begins and ends at one of the points $(z_1^{\sigma,\lambda}, 0)$ or $(z_2^{\sigma,\lambda}, 0)$. As shown in Figure 3.
To formalize the analysis, we define a \textbf{transition point} as any point $y \neq \frac{1}{\varepsilon}$ at which $v'(y) = 0$.
According to Definition \ref{two-smooth-interface-phase-transition-solution}, a solution is referred to as a \textbf{two-smooth-interface phase transition solution} if it possesses exactly two transition points. Hence, applying the substitution method for definite integrals to such a phase transition solution with two interfaces yields the following key identity for constructing periodic solutions of system \eqref{Approximate ns-lagrange}:
\begin{equation}
\int_{-\frac 1\varepsilon}^{\frac 1\varepsilon}v^n(y)dy
=\frac{2}{\sqrt{2}}\int_{z_1^{\sigma,\lambda}}^{z_2^{\sigma,\lambda}}s^n\big( f_{\sigma,\lambda}(s)\big)^{-\frac12}ds.
\end{equation}
Furthermore, using the conditions that the period of this periodic solution is $\frac2\varepsilon$ and the constraint  \eqref{constraint for v,u} $v$,
 the phase transition solution with two interfaces $v$ imposes the following additional restrictions on  $(\sigma, \lambda)$:
\begin{equation}\label{restriction for admissible pair}
\left\{\begin{array}{llll}
 \displaystyle I_0(\sigma,\lambda)\xlongequal{\mathrm{def}}\frac{1}{\sqrt{2}} \int_{z_1^{\sigma,\lambda}}^{z_2^{\sigma,\lambda}}\big( f_{\sigma,\lambda}(s)\big)^{-\frac12}ds=\frac{1}\varepsilon,\\ \displaystyle I_1(\sigma,\lambda)\xlongequal{\mathrm{def}}\frac{1}{\sqrt{2}}\int_{z_1^{\sigma,\lambda}}^{z_2^{\sigma,\lambda}}s\big(f_{\sigma,\lambda}(s)\big)^{-\frac12}ds=\frac{\bar v}\varepsilon.
\end{array}\right.
\end{equation}
Hence, a necessary and sufficient condition for  $(\sigma, \lambda)\in D^{\varepsilon} (\sigma, \lambda)$ to correspond to a phase transition solution with two interfaces $v(y)$ is that it satisfies the system \eqref{restriction for admissible pair}, yielding a one-to-one correspondence.

\begin{lemma} Given fixed parameters $\varepsilon > 0$ and $\bar{v} \in (\alpha_0, \beta_0)$, a one-to-one correspondence exists between phase transition solution with two interfaces $v(y)$ and  $(\sigma,\lambda)\in D^{\varepsilon} $ satisfying the restriction in \eqref{restriction for admissible pair}. Specifically, each phase transition solution with two interfaces uniquely determines an associated  $(\sigma,\lambda)\in D^{\varepsilon} $ that satisfies \eqref{restriction for admissible pair}; and conversely, every  $(\sigma,\lambda)\in D^{\varepsilon} $ fulfilling this restriction \eqref{restriction for admissible pair} corresponds to exactly one phase transition solution with two interfaces  $v(y)$.
\end{lemma}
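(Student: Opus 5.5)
The correspondence will be built in both directions from the first integral \eqref{first integral} and the phase portrait of Lemma \ref{properties of w}. The normalization is that the unique minimum point (single-valley) or maximum point (single-peak) sits at a fixed location, and otherwise the profile is taken modulo translation; the correspondence also separates single-peak from single-valley, both of which give the same $(\sigma,\lambda)$.

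\emph{Forward direction.} Let $v(y)$ be a phase transition solution with two interfaces, i.e. a nonconstant $\frac2\varepsilon$-periodic solution of \eqref{ODE under scaling transformation} with exactly two transition points. Its Lagrange multiplier $\sigma$ is fixed by the equation, and $\lambda$ is fixed by \eqref{first integral} evaluated at any point. Since $v$ is nonconstant and periodic, its orbit in the $(v,v')$ plane is a closed curve not reduced to an equilibrium. By Lemma \ref{properties of w} and the phase portraits in Figure 3, this forces $\sigma\in(\underline\sigma,\overline\sigma)$ and $\lambda$ strictly between the higher local minimum of $W+\sigma v$ and the local maximum value $W(\xi_\sigma)+\sigma\xi_\sigma$. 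The endpoint cases are excluded as follows: at the bottom value the orbit degenerates to an equilibrium, and at the top value it becomes a homoclinic orbit of infinite period. Hence $(\sigma,\lambda)\in D^\varepsilon$, which is \eqref{range of sigma and lambda}.

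On the orbit, $v'=0$ exactly where $f_{\sigma,\lambda}(v)=0$, that is at $z_1^{\sigma,\lambda}$ and $z_2^{\sigma,\lambda}$. Monotone pieces of $v$ therefore run between these two turning values. Two transition points per period mean exactly one increasing and one decreasing sweep. On each sweep we change variables $y\mapsto v$ using $dy=ds/\sqrt{2f_{\sigma,\lambda}(s)}$; the singularities at the endpoints are integrable because $f'_{\sigma,\lambda}(z_i)\neq0$. Summing the two sweeps gives period $\frac2\varepsilon=2I_0$ and $\int v\,dy=2I_1=\frac{2\bar v}{\varepsilon}$, which is \eqref{restriction for admissible pair}.

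\emph{Converse direction.} Given $(\sigma,\lambda)\in D^\varepsilon$ satisfying \eqref{restriction for admissible pair}, we construct $v$ explicitly. On $[z_1,z_2]$ define $Y(v)=\frac1{\sqrt2}\int_{z_1}^{v}f_{\sigma,\lambda}^{-1/2}(s)\,ds$. This is strictly increasing and continuous from $0$ to $I_0=\frac1\varepsilon$. Let $v$ be its inverse on $[0,\frac1\varepsilon]$, extend it evenly to $[-\frac1\varepsilon,\frac1\varepsilon]$, and then extend periodically; this gives the single-valley profile, and the reflected construction gives the peak.

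We must check that $v$ is $C^2$ and solves $-v''=p(v)-\sigma$, including at the turning points. There $v'=0$ and $v''=-f'_{\sigma,\lambda}(z_i)=p(z_i)-\sigma\neq0$. The cleanest argument is uniqueness for the ODE initial value problem: the solution starting at $(z_1,0)$ traces the closed level curve and is symmetric under reflection, so it coincides with the even extension. The first identity in \eqref{restriction for admissible pair} gives the period $\frac2\varepsilon$, and the second gives the mass constraint. The transition points are exactly the two turning points, because $f_{\sigma,\lambda}>0$ in the interior of $(z_1,z_2)$.

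\emph{Uniqueness.} Two solutions with the same $(\sigma,\lambda)$ both trace the same closed orbit and satisfy the same ODE. Under the normalization, they share the same initial data at the extremum, so they coincide by uniqueness.

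\emph{Main obstacle.} The delicate step is showing in the forward direction that the orbit must be the inner closed curve between $z_1$ and $z_2$, and not some other component of the level set $\{f_{\sigma,\lambda}=\tfrac12 v'^2\}$. This comes down to the shape of $W+\sigma v$: it tends to $+\infty$ as $v\to b^+$ and as $v\to\infty$, the second because $W\sim -R\theta\ln v$ while $\sigma v$ grows linearly with $\sigma>0$. Consequently the other level components are either unbounded or separated from the well by the barriers at $\alpha_\sigma$ or $\beta_\sigma$, so on them $f_{\sigma,\lambda}$ cannot vanish at two finite points enclosing a region where $f_{\sigma,\lambda}>0$.
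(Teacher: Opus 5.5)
Your proposal is correct and follows the paper's own, very brief, argument. The paper uses the first integral \eqref{first integral}, identifies the closed orbit between $z_1^{\sigma,\lambda}$ and $z_2^{\sigma,\lambda}$, and changes variables via $dy=ds/\sqrt{2f_{\sigma,\lambda}(s)}$ to obtain \eqref{restriction for admissible pair}. You additionally supply the explicit converse construction (even extension plus ODE uniqueness) and the normalization that the paper leaves implicit: a turning point at $y=\pm\frac1\varepsilon$, peak and valley treated separately, and uniqueness only modulo translation.

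There are two harmless slips to fix. First, at a turning point $v''=f'_{\sigma,\lambda}(z_i)=\sigma-p(z_i)$, not $p(z_i)-\sigma$. Second, your endpoint degenerations are swapped: at $\lambda=W(\xi_\sigma)+\sigma\xi_\sigma$ the orbit collapses to the center $(\xi_\sigma,0)$, whereas at the higher local-minimum level it becomes a homoclinic loop (a heteroclinic cycle if $\sigma=\sigma_0$) of infinite period. Both endpoints are still excluded, so the argument is unaffected.
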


\begin{remark}
Extending \eqref{restriction for admissible pair}, we find that for any solution $v(y)$ of \eqref{first integral} and \eqref{Two-point boundary value condition} with $2N$ transitions ($N\geq1$), there exists a corresponding  $(\sigma, \lambda)\in D^{\varepsilon} (\sigma, \lambda)$ such that
\begin{equation}\label{The necessary condition for the number of phase transitions}
\left\{\begin{array}{llll}
\displaystyle \frac{N}{\sqrt{2}}\int_{z_1^{\sigma,\lambda}}^{z_2^{\sigma,\lambda}}\Big( -\big(\frac{a}{s}-\frac{a}{\bar{v}}\big)-R\theta\ln\big(\frac{s-b}{\bar v-b}\big)+\sigma s-\lambda\Big)^{-\frac12}ds=\frac{1}\varepsilon,\\
\displaystyle \frac{N}{\sqrt{2}}\int_{z_1^{\sigma,\lambda}}^{z_2^{\sigma,\lambda}}s\Big( -\big(\frac{a}{s}-\frac{a}{\bar{v}}\big)-R\theta\ln\big(\frac{s-b}{\bar v-b}\big)+\sigma s-\lambda\Big)^{-\frac12}ds=\frac{\bar v}\varepsilon.
\end{array}\right.
\end{equation}
By applying the two aforementioned constraints \eqref{The necessary condition for the number of phase transitions}, we can derive a precise estimate of the number of periodic solutions that are mathematically feasible. However, not all of these potential solutions carry physical relevance. To identify those that are physically admissible, we will adopt an energy minimization principle. Specifically, in the final part of this section, we will demonstrate that among all possible nontrivial periodic solutions, only those involving exactly two phase transitions correspond to configurations with minimal energy and are thus physically realizable.
\end{remark}

\

In the following, we first establish the existence of a two-smooth-interface phase transition solution under constraint condition \eqref{restriction for admissible pair}, and then prove that such solutions achieve minimal energy among all possible periodic solutions. Further properties of $W(v)+\sigma v$ and the properties of singular integrals of $I_0,I_1$ in \eqref{restriction for admissible pair}, are stated in lemmas \ref{lem:7}-\ref{nr-property}. The proofs of these lemmas can be obtained through elementary calculus. To avoid making the text overly lengthy, we only provide a general outline of the proofs here. For detailed proofs, readers are referred to similar work in Carr, Gurtin, and Slemrod \cite{CGS1984}.

\

\begin{lemma}\label{lem:7}  The critical points $\alpha_\sigma$, $\xi_\sigma$, $\beta_\sigma$ of $W(v)+\sigma v$, regarded as functions of $\sigma$, are continuous on $[\underline{\sigma},\overline{\sigma}]$ and $C^4$ on $(\underline{\sigma},\overline{\sigma})$, with $\alpha_\sigma$ and $\beta_\sigma$ strictly decreasing and $\xi_\sigma$ strictly increasing. Moreover, 
\begin{equation}
\alpha_{\underline{\sigma}}=\xi_{\underline{\sigma}}=\alpha,\qquad \beta_{\overline{\sigma}}=\xi_{\overline{\sigma}}=\beta,
\end{equation}
and for every $\sigma\in(\underline{\sigma},\overline{\sigma})$,
\begin{equation}
  W''(\alpha_{\sigma}), \quad W''(\beta_\sigma) > 0,\quad W''(\xi_{\sigma}) < 0. 
\end{equation}
\end{lemma}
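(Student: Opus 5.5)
The plan is to treat $\alpha_\sigma,\xi_\sigma,\beta_\sigma$ as the three branches of the inverse of $p$ on its three monotonicity intervals from Lemma \ref{properties of pressure}(ii). By Lemma \ref{properties of w}(i), these are exactly the roots of $W'(v)+\sigma=0$, i.e. of $p(v)=\sigma$. So write
\[
\alpha_\sigma=(p|_{(b,\alpha]})^{-1}(\sigma),\qquad \xi_\sigma=(p|_{[\alpha,\beta]})^{-1}(\sigma),\qquad \beta_\sigma=(p|_{[\beta,\infty)})^{-1}(\sigma).
\]
We need to check the ranges. On $(b,\alpha]$, $p$ decreases from $+\infty$ to $p(\alpha)=\underline\sigma$, so its image contains $[\underline\sigma,\overline\sigma]$. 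On $[\alpha,\beta]$, $p$ increases from $\underline\sigma$ to $\overline\sigma$. On $[\beta,\infty)$, $p$ decreases from $\overline\sigma$ to $0^+$, using Lemma \ref{properties of pressure}(i). Hence each branch is defined and continuous on the closed interval $[\underline\sigma,\overline\sigma]$, since the inverse of a continuous strictly monotone function on an interval is continuous.

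Monotonicity then follows immediately. The inverse of a decreasing function is decreasing, so $\alpha_\sigma$ and $\beta_\sigma$ are strictly decreasing; the inverse of an increasing function is increasing, so $\xi_\sigma$ is strictly increasing. The endpoint identities follow by evaluation:
\begin{itemize}
\item at $\sigma=\underline\sigma=p(\alpha)$, the branches on $(b,\alpha]$ and $[\alpha,\beta]$ both return $\alpha$, giving $\alpha_{\underline\sigma}=\xi_{\underline\sigma}=\alpha$;
\item at $\sigma=\overline\sigma=p(\beta)$, the branches on $[\alpha,\beta]$ and $[\beta,\infty)$ both return $\beta$, giving $\xi_{\overline\sigma}=\beta_{\overline\sigma}=\beta$.
\end{itemize}

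For the second-derivative signs we use $W''=-p'$ from \eqref{derivative of W}. For $\sigma\in(\underline\sigma,\overline\sigma)$ the three roots lie in the open intervals $(b,\alpha)$, $(\alpha,\beta)$ and $(\beta,\infty)$.

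The one genuine point, and the step I expect to be the main obstacle, is strictness. Lemma \ref{properties of pressure}(ii) gives only strict monotonicity, not $p'\neq0$ off $\{\alpha,\beta\}$. I would settle this explicitly from the van der Waals formula:
\[
p'(v)=\frac{-R\theta v^3+2a(v-b)^2}{v^3(v-b)^2}.
\]
The numerator is a cubic $g(v)$ with $g(b)<0$ and $g(v)\to-\infty$ as $v\to\infty$. Since $\alpha,\beta$ are zeros of $g$ and $g$ has at most three real roots, a third root would have to lie in $(0,b)$. Indeed $g(0)=2ab^2>0$ and $g(b)<0$, so $g$ has a root in $(0,b)$. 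Therefore $\alpha$ and $\beta$ are the only zeros of $p'$ on $(b,\infty)$, and they are simple. Consequently $p'<0$ on $(b,\alpha)\cup(\beta,\infty)$ and $p'>0$ on $(\alpha,\beta)$. This gives $W''(\alpha_\sigma)>0$, $W''(\beta_\sigma)>0$ and $W''(\xi_\sigma)<0$.

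The same nonvanishing of $p'$ at the three roots lets us apply the inverse function theorem. Since $p$ is real-analytic on $(b,\infty)$, each branch is $C^\infty$, in particular $C^4$, on the open interval $(\underline\sigma,\overline\sigma)$. The derivative is given by
\[
\frac{d\alpha_\sigma}{d\sigma}=\frac{1}{p'(\alpha_\sigma)},
\]
and similarly for $\xi_\sigma$ and $\beta_\sigma$. These formulas reconfirm the monotonicity signs. They also show that the derivatives blow up at the endpoints where two branches merge, which is why regularity is claimed only on the open interval.
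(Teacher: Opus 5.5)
Your inverse-branch argument is essentially the ``elementary calculus'' the paper alludes to. The paper itself gives no proof of this lemma and simply quotes it from Carr--Gurtin--Slemrod. Most of your argument is correct:
\begin{itemize}
\item the treatment of $\alpha_\sigma$ and $\xi_\sigma$ and the endpoint identities;
\item the cubic-numerator argument showing $p'\neq 0$ on $(b,\infty)\setminus\{\alpha,\beta\}$;
\item the signs of $W''=-p'$;
\item the smoothness via the inverse function theorem.
\end{itemize}

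There is one genuine gap: the existence of $\beta_\sigma$ on all of $[\underline\sigma,\overline\sigma]$. Lemma \ref{properties of pressure}(i) only gives $p([\beta,\infty))=(0,\overline\sigma]$. This contains $[\underline\sigma,\overline\sigma]$ if and only if $\underline\sigma=p(\alpha)>0$, which you never verify.

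For the van der Waals law this positivity does not follow from $0<\theta<\theta_c$. Using $p'(\alpha)=0$, i.e.\ $R\theta\alpha^3=2a(\alpha-b)^2$, one finds $p(\alpha)=a(\alpha-2b)/\alpha^3$. Hence $p(\alpha)>0$ exactly when $\alpha>2b$, which is equivalent to $\theta>\frac{27}{32}\theta_c$. For instance, as $\theta\to0$ one has $\alpha\to b^+$ and $p(\alpha)\to -a/b^2<0$. For $\theta\le\frac{27}{32}\theta_c$ the equation $p(v)=\sigma$ has no root in $(\beta,\infty)$ when $\sigma\in[\underline\sigma,0]$. So $\beta_\sigma$ is undefined near $\underline\sigma$, and the asserted continuity on the closed interval fails.

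To close the step within the paper's framework, invoke Lemma \ref{properties of pressure}(iv)--(v) instead of (i). Part (iv) supplies $\bar\beta>\beta$ with $p(\bar\beta)=\underline\sigma$. This forces $\underline\sigma>0$ and gives $\beta_{\underline\sigma}=\bar\beta$, after which your monotone-inverse argument on $[\beta,\bar\beta]$ goes through. You should state explicitly that this is an extra hypothesis, $p(\alpha)>0$, built into Lemma \ref{properties of pressure}(iv)--(v). It cannot be derived from $\lim_{v\to\infty}p(v)=0^+$.
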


\


\begin{lemma}\label{lem:8}
Consider the function $ W(v) + \sigma v - \lambda$. Its two roots, denoted by $z_1^{\sigma,\lambda}$ and $z_2^{\sigma,\lambda}$, are continuous on the closed domain $\bar{D}^{\varepsilon}$ and $C^4$-smooth in the interior $D^{\varepsilon}$.
Furthermore, for all $(\sigma,\lambda) \in D^{\varepsilon}$, the roots are ordered as
\begin{equation}
\alpha_0 \;<\; z_1^{\sigma,\lambda} \;<\; z_2^{\sigma,\lambda} \;<\; \beta_0.
\end{equation}
On the boundary curves $\Gamma^{\varepsilon}_i$, the roots satisfy the identifications
\begin{equation}
\left.\begin{aligned}
z_1^{\sigma,\lambda} &=\alpha_\sigma , & & (\sigma,\lambda)\in\Gamma^{\varepsilon}_1,\\
z_2^{\sigma,\lambda} &=\beta_{\sigma},  & & (\sigma,\lambda)\in\Gamma^{\varepsilon}_2,\\
z_1^{\sigma,\lambda} &= z_2^{\sigma,\lambda}=\xi_{\sigma}, & & (\sigma,\lambda)\in\Gamma^{\varepsilon}_3.
\end{aligned}\right.
\end{equation}
In addition, the following non-vanishing conditions hold outside the point $(\sigma_0,\lambda_0)$:
\begin{equation}
\left.\begin{aligned}
p(z_1^{\sigma,\lambda}) - \sigma &\neq 0, && (\sigma,\lambda)\in\Gamma^{\varepsilon}_2\setminus\{(\sigma_0,\lambda_0)\},\\
p(z_2^{\sigma,\lambda}) - \sigma &\neq 0, && (\sigma,\lambda)\in\Gamma^{\varepsilon}_1\setminus\{(\sigma_0,\lambda_0)\}.
\end{aligned}\right.
\end{equation}
\end{lemma}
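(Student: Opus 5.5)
The plan is to treat $z_1^{\sigma,\lambda}$ and $z_2^{\sigma,\lambda}$ as the two branches of the level set $\{f_{\sigma,\lambda}=0\}$, with $f_{\sigma,\lambda}(v)=W(v)+\sigma v-\lambda$. I would work entirely with Lemma \ref{properties of w} and Lemma \ref{lem:7}, and get regularity from the implicit function theorem.

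\textbf{Step 1 (regularity in $D^\varepsilon$).} For $(\sigma,\lambda)\in D^{\varepsilon}$, the range \eqref{range of sigma and lambda} together with the monotonicity \eqref{monotinicity of W} gives exactly one root in $(\alpha_\sigma,\xi_\sigma)$ and one in $(\xi_\sigma,\beta_\sigma)$. At each such root
\[
\partial_v f_{\sigma,\lambda}=\sigma-p(z_i^{\sigma,\lambda})\neq 0,
\]
because $z_i$ is not one of the critical points $\alpha_\sigma,\xi_\sigma,\beta_\sigma$. Since $f$ is $C^\infty$ in $(v,\sigma,\lambda)$ for $v>b$, the implicit function theorem gives $z_i\in C^4(D^\varepsilon)$; indeed it gives $C^\infty$, and $C^4$ is all that is claimed.

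\textbf{Step 2 (continuity up to $\bar D^{\varepsilon}$ and the identifications).} Here I would use the bracketing
\[
\alpha_\sigma\le z_1\le \xi_\sigma\le z_2\le \beta_\sigma,
\]
together with the continuity of $\alpha_\sigma,\xi_\sigma,\beta_\sigma$ from Lemma \ref{lem:7}. The boundary cases go as follows.

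\begin{itemize}
\item On $\Gamma_3^\varepsilon$, where $\lambda$ equals the local maximum value $W(\xi_\sigma)+\sigma\xi_\sigma$, both branches are squeezed onto $\xi_\sigma$. This gives $z_1=z_2=\xi_\sigma$.
\item On $\Gamma_1^\varepsilon$ and $\Gamma_2^\varepsilon$, $\lambda$ equals the value at a local minimum. Continuity there follows from uniqueness of the root in the closed bracket: any limit point of $z_i$ is a root in the closed interval, and that root is unique by strict monotonicity.
\item The identification with $\alpha_\sigma$ on $\Gamma_1^\varepsilon$ and with $\beta_\sigma$ on $\Gamma_2^\varepsilon$ is to be read off from which endpoint of the bracket realizes the value $\lambda$. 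I would fix which endpoint of which bracket attains $\lambda$ using the ordering of the minimum values in Lemma \ref{properties of w}(iii), combined with the labeling conventions of $\Gamma_1^\varepsilon$ and $\Gamma_2^\varepsilon$ in \eqref{Gamma-i}.
\end{itemize}

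\textbf{Main obstacle.} The step I expect to be the real obstacle is exactly this endpoint bookkeeping on $\Gamma_1^\varepsilon$ and $\Gamma_2^\varepsilon$. The identification must match the sign of $W(\beta_\sigma)-W(\alpha_\sigma)+\sigma(\beta_\sigma-\alpha_\sigma)$ from \eqref{properity-3 of W}, and the derivative identity \eqref{monotonicity of W} pins this sign to the side of $\sigma_0$. I would first check the corner $\sigma=\sigma_0$, where $\lambda=\lambda_0$ and both minima coincide, so that $z_1=\alpha_{\sigma_0}=\alpha_0$ and $z_2=\beta_{\sigma_0}=\beta_0$. I would then propagate away from $\sigma_0$ along each curve by continuity and Step 1, checking at each point which branch touches the minimum.

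\textbf{Step 3 (non-vanishing and ordering).} Away from $(\sigma_0,\lambda_0)$, the remaining (non-degenerate) root on $\Gamma_1^\varepsilon$, namely $z_2$, and on $\Gamma_2^\varepsilon$, namely $z_1$, is separated from the critical set $\{\alpha_\sigma,\xi_\sigma,\beta_\sigma\}$. The reason is that the value $\lambda$ is strictly different from the other two critical values, by \eqref{properity-3 of W} with strict inequality for $\sigma\ne\sigma_0$ and by \eqref{range of sigma and lambda}. Hence $p(z_i)-\sigma=-\partial_v f\neq0$ there. Only at the corner do both minima attain $\lambda$, which is why that point is excluded.

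For the ordering $\alpha_0<z_1<z_2<\beta_0$, the inner inequality follows from $z_1<\xi_\sigma<z_2$. For the outer inequalities I would compare $\lambda$ with the level $\lambda_0$ of the Maxwell line. This uses the monotone dependence of $z_1$ and $z_2$ on $(\sigma,\lambda)$, read off from the signs
\[
\partial_\lambda z_i=\frac{1}{\sigma-p(z_i)},\qquad \partial_\sigma z_i=-\frac{z_i}{\sigma-p(z_i)},
\]
and it is pinned at the corner, where $z_1=\alpha_0$ and $z_2=\beta_0$.
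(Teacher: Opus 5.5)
\paragraph{The boundary identification fails.} The step that breaks is the endpoint bookkeeping you postponed in Step~2, and Step~3 then asserts the wrong assignment. The regularity argument in Step~1 and the continuity part of Step~2 are fine.

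On $\Gamma_1^{\varepsilon}$, as defined in \eqref{Gamma-i}, we have $\lambda=W(\beta_\sigma)+\sigma\beta_\sigma$. Since $W+\sigma v$ is strictly decreasing on $[\xi_\sigma,\beta_\sigma]$, the root in that bracket is forced: $z_2^{\sigma,\lambda}=\beta_\sigma$. No propagation from the corner is needed. Having also $z_1^{\sigma,\lambda}=\alpha_\sigma$ would require $W(\alpha_\sigma)+\sigma\alpha_\sigma=W(\beta_\sigma)+\sigma\beta_\sigma$, which by \eqref{properity-3 of W} holds only at $\sigma=\sigma_0$. Likewise, on $\Gamma_2^{\varepsilon}$ (where $\sigma<\sigma_0$) you get $z_1^{\sigma,\lambda}=\alpha_\sigma$ and $z_2^{\sigma,\lambda}\neq\beta_\sigma$.

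So the identifications in the statement are swapped, and so are the non-vanishing conditions. On $\Gamma_1^{\varepsilon}$ it is $z_2$ that is degenerate, $p(z_2^{\sigma,\lambda})-\sigma=0$. Your own Step~3 reasoning ($\lambda$ differs from the other two critical values) actually yields the opposite of what you wrote: $p(z_1^{\sigma,\lambda})-\sigma\neq 0$ on $\Gamma_1^{\varepsilon}\setminus\{(\sigma_0,\lambda_0)\}$, and $p(z_2^{\sigma,\lambda})-\sigma\neq 0$ on $\Gamma_2^{\varepsilon}$.

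No bookkeeping can rescue the stated version, because it contradicts the ordering you also set out to prove. For $\sigma>\sigma_0$, Lemma~\ref{lem:7} gives $\alpha_\sigma<\alpha_0$. Then $z_1=\alpha_\sigma$ on $\Gamma_1^{\varepsilon}$, together with continuity on $\bar D^{\varepsilon}$, would force $z_1<\alpha_0$ at interior points near $\Gamma_1^{\varepsilon}$. The paper gives no argument for this lemma, only a reference to Carr--Gurtin--Slemrod. The swap looks like an artifact of transcribing their curves under the opposite sign convention for $\sigma$.

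\paragraph{What you can prove instead.} Your Steps~1--2 establish the corrected lemma:
\begin{itemize}
\item $z_2^{\sigma,\lambda}=\beta_\sigma$ on $\Gamma_1^{\varepsilon}$;
\item $z_1^{\sigma,\lambda}=\alpha_\sigma$ on $\Gamma_2^{\varepsilon}$;
\item $z_1=z_2=\xi_\sigma$ on $\Gamma_3^{\varepsilon}$;
\item the non-vanishing conditions interchanged as described above.
\end{itemize}

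For the ordering, avoid integrating your derivative formulas out from the corner. They are singular there, since $z_1=\alpha_{\sigma_0}$ is itself a critical point. Use the envelope identity $\frac{d}{d\sigma}\big(W(\beta_\sigma)+\sigma\beta_\sigma\big)=\beta_\sigma$ instead. For $\sigma\ge\sigma_0$ and $(\sigma,\lambda)\in D^{\varepsilon}$,
\[
\lambda>W(\beta_\sigma)+\sigma\beta_\sigma=\lambda_0+\int_{\sigma_0}^{\sigma}\beta_s\,ds\ \ge\ \lambda_0+(\sigma-\sigma_0)\alpha_0=W(\alpha_0)+\sigma\alpha_0 .
\]
Since $\alpha_0\in[\alpha_\sigma,\xi_\sigma)$ and $W+\sigma v$ is strictly increasing there, this gives $z_1^{\sigma,\lambda}>\alpha_0$. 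The bound $z_2^{\sigma,\lambda}<\beta_\sigma\le\beta_0$ is immediate. The case $\sigma<\sigma_0$ is symmetric, using $W(\alpha_\sigma)+\sigma\alpha_\sigma=\lambda_0-\int_{\sigma}^{\sigma_0}\alpha_s\,ds\ge W(\beta_0)+\sigma\beta_0$ together with $\beta_0\in(\xi_\sigma,\beta_\sigma]$.
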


\ 

Now we denote the transformation
\begin{equation}\label{h-Phi}\left\{\begin{array}{llll}
 \displaystyle h_1\xlongequal{\mathrm{def}}\lambda-W(\alpha_\sigma)-\sigma\alpha_\sigma,\qquad
  h_2\xlongequal{\mathrm{def}}\lambda-W (\beta_\sigma)-\sigma\beta_\sigma,\\
  \displaystyle (h_1,h_2) \xlongequal{\mathrm{def}}\mathbf{\Psi}(\sigma,\lambda)=(\Psi_1,\Psi_2),
\end{array}\right.\end{equation}

\ 

In order to establish the direct  relationship between the roots of  $W(v) + \sigma v - \lambda$ and the critical points of $W(v) + \sigma v$, we need to introduce the following so-called \textbf{nearly regular} definition which introduced by Carr--Gurtin--Slemrod \cite{CGS1984}.
\begin{definition} (\textbf{Nearly regular})
Let $f(h_1,h_2)$ be a function defined on $\mathbf{\Psi}(D^{\varepsilon})$ near $(h_1,h_2)=0$. We say $f$ is nearly regular at $(h_1,h_2)=0$ if it is of class $C^1$ in that region and satisfies the following asymptotic behaviour: there exists a constant $s \in (0,1)$ such that
\begin{equation}
f(h_1,h_2) = f(0,0) + O\big((h_1^2+h_2^2)^{1-s}\big) \quad \text{and} \quad
\nabla f(h_1,h_2) = \big( O(h_1^{-s}),\, O(h_2^{-s}) \big),
\end{equation}
as $(h_1,h_2) \rightarrow (0,0)$ within $(h_1,h_2)\in\mathbf{\Psi}(D^{\varepsilon})$. Furthermore, we denote
\begin{equation}
  NR(\Omega)=\big\{g\in C(\Omega)\big|g(\mathbf{\Psi}^{-1})\ \mathrm{is\ nearly\ regular\ at\ }(0,0)  
  \big\},
\end{equation}and
\begin{equation}
  NR^+(\Omega)=\big\{g\in NR(\Omega)\big|g>0 \ \mathrm{on}\ \Omega\big\}. 
\end{equation}
\end{definition}

\begin{lemma}\label{lem:9}
The functions $z_1^{\sigma,\lambda}$ belong to $NR(\overline{D}^{\varepsilon})$. In fact, for $(h_1,h_2)=\mathbf{\Psi}(\sigma,\lambda)$,
\begin{equation}
  z_1^{\sigma,\lambda}=\alpha_\sigma+h_1^{\frac12}\Theta_1(\sigma,\lambda),\qquad z_2^{\sigma,\lambda}=\beta_\sigma+h_2^{\frac12}\Theta_2(\sigma,\lambda),
\end{equation}
on $D^{\varepsilon}$ with $\Theta_i\in NR^+(\bar{D}^{\varepsilon}\setminus\{(\underline{\sigma},\underline{\lambda}),(\overline{\sigma},\bar{\lambda})\})$, where $\underline{\lambda}=W(\beta)+\underline\sigma\beta$ and $\overline{\lambda}=W(\alpha)+\overline\sigma\alpha$.
\end{lemma}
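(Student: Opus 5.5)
The plan is to treat each root separately as a zero of $f_{\sigma,\lambda}(v)=W(v)+\sigma v-\lambda$ near a nondegenerate minimum, and to extract the square root by a Morse-type change of variables. I describe $z_1^{\sigma,\lambda}$; the argument for $z_2^{\sigma,\lambda}$ is identical with $\alpha_\sigma,h_1$ replaced by $\beta_\sigma,h_2$.

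First I would Taylor expand $g_\sigma(v)=W(v)+\sigma v$ about its local minimum $\alpha_\sigma$. Since $g_\sigma'(\alpha_\sigma)=0$, we can write
\[
g_\sigma(v)-g_\sigma(\alpha_\sigma)=(v-\alpha_\sigma)^2 Q(\sigma,v),\qquad Q(\sigma,v)=\int_0^1(1-t)\,W''\big(\alpha_\sigma+t(v-\alpha_\sigma)\big)\,dt .
\]
By Lemma \ref{lem:7}, $Q(\sigma,\alpha_\sigma)=\tfrac12 W''(\alpha_\sigma)>0$ for $\sigma\in(\underline\sigma,\overline\sigma)$. Smoothness follows from $W\in C^\infty(b,\infty)$ together with the $C^4$ regularity of $\alpha_\sigma$. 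Moreover, on $(\alpha_\sigma,\xi_\sigma)$ the function $g_\sigma$ is increasing by Lemma \ref{properties of w}(ii). Hence $Q>0$ for $v\in(\alpha_\sigma,\xi_\sigma]$.

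Next, the defining relation $f_{\sigma,\lambda}(z_1)=0$ reads $(z_1-\alpha_\sigma)^2Q(\sigma,z_1)=h_1$, with $z_1>\alpha_\sigma$. I set $w=(v-\alpha_\sigma)\sqrt{Q(\sigma,v)}$. Its derivative is $\partial_v w=\sqrt{Q}+(v-\alpha_\sigma)\partial_vQ/(2\sqrt Q)$. This equals $\sqrt{Q(\sigma,\alpha_\sigma)}>0$ at $v=\alpha_\sigma$, and it stays positive on $[\alpha_\sigma,\xi_\sigma)$ because $w^2=g_\sigma-g_\sigma(\alpha_\sigma)$ is strictly increasing there. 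The inverse function theorem then gives a $C^3$ inverse $v=\alpha_\sigma+w\,\Phi(\sigma,w)$ with $\Phi>0$. Taking $w=h_1^{1/2}$ yields $z_1=\alpha_\sigma+h_1^{1/2}\Theta_1$ with $\Theta_1(\sigma,\lambda)=\Phi(\sigma,h_1^{1/2})>0$. Continuity of $\Theta_1$ up to $\Gamma_2^\varepsilon$ (where $h_1=0$) and up to $\Gamma_3^\varepsilon$ (where $z_1=\xi_\sigma$) comes directly from this construction. The set of admissible $\sigma$ must exclude the corners $(\underline\sigma,\underline\lambda)$ and $(\overline\sigma,\overline\lambda)$, where $\alpha_\sigma$ or $\beta_\sigma$ merges with $\xi_\sigma$.

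For the near-regular property I would compute the gradient of $\Theta_1\circ\Psi^{-1}$ with respect to $(h_1,h_2)$. Since $\Phi$ is $C^1$ in $(\sigma,w)$, we get $\partial_{h_1}\Theta_1=\partial_w\Phi\cdot\tfrac12h_1^{-1/2}+\partial_\sigma\Phi\,\partial_{h_1}\sigma$. The Jacobian of $\Psi$ is $\partial_\sigma(h_1,h_2)=(-\alpha_\sigma,-\beta_\sigma)$, $\partial_\lambda(h_1,h_2)=(1,1)$, using $g_\sigma'(\alpha_\sigma)=0$. Its determinant is $\beta_\sigma-\alpha_\sigma>0$, so $\Psi$ is a local $C^4$ diffeomorphism and $\partial_{h_i}\sigma$ is bounded. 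This gives the bounds $\nabla\Theta_1=(O(h_1^{-1/2}),O(1))$ and $\Theta_1=\Theta_1(0,0)+O(|h|^{1/2})$, i.e.\ near regularity with $s=1/2$.

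The same estimates for $z_1=\alpha_\sigma+h_1^{1/2}\Theta_1$ itself show $z_1\in NR(\overline D^\varepsilon)$. The main obstacle I expect is the behaviour near $(\sigma_0,\lambda_0)$, where $h_1$ and $h_2$ vanish simultaneously. There one must check that the $O(h_i^{-s})$ bounds remain uniform as both $h_i\to0$. I would handle this using the nondegeneracy $\beta_\sigma-\alpha_\sigma>0$ of the Jacobian and the positivity $W''(\alpha_{\sigma_0}),W''(\beta_{\sigma_0})>0$, which make all constants above uniform in a neighbourhood of $\sigma_0$.
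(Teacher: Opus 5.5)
The paper gives no proof of this lemma; it says Lemmas \ref{lem:7}--\ref{nr-property} are quoted from Carr--Gurtin--Slemrod. Your argument therefore has to stand on its own, and for $z_1^{\sigma,\lambda}$ it does. The chart $w=(v-\alpha_\sigma)\sqrt{Q(\sigma,v)}$ is increasing on $[\alpha_\sigma,\xi_\sigma]$, and $z_1^{\sigma,\lambda}$ corresponds to $w=h_1^{1/2}$. Since $\det D\mathbf{\Psi}=\beta_\sigma-\alpha_\sigma>0$ near $(\sigma_0,\lambda_0)$, you get $\nabla(\Theta_1\circ\mathbf{\Psi}^{-1})=(O(h_1^{-1/2}),O(1))$. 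So $(\sigma_0,\lambda_0)$ is not actually an obstacle.

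The step that fails is the claim that $z_2^{\sigma,\lambda}$ is handled identically with $\alpha_\sigma,h_1$ replaced by $\beta_\sigma,h_2$. The root $z_2^{\sigma,\lambda}$ lies in $(\xi_\sigma,\beta_\sigma)$, to the left of the minimum $\beta_\sigma$. In the chart $w=(v-\beta_\sigma)\sqrt{Q_2(\sigma,v)}$, with $Q_2(\sigma,v)=\int_0^1(1-t)W''(\beta_\sigma+t(v-\beta_\sigma))\,dt$, it therefore sits on the branch $w=-h_2^{1/2}$. Substituting $w=+h_2^{1/2}$ as in the $z_1$ case instead picks out the zero of $f_{\sigma,\lambda}$ in $(\beta_\sigma,\infty)$, which has nothing to do with the orbit. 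Done correctly, your method gives $z_2^{\sigma,\lambda}=\beta_\sigma-h_2^{1/2}\Theta_2$ with $\Theta_2>0$.

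The formula as printed cannot be proved by any method. On $D^\varepsilon$ one has $h_2>0$ and $z_2^{\sigma,\lambda}<\beta_\sigma$, so $(z_2^{\sigma,\lambda}-\beta_\sigma)h_2^{-1/2}<0$, contradicting $\Theta_2\in NR^+$. You should point out the sign error and prove the corrected statement rather than assert the printed one.

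Smaller points:

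\textbf{The exponent $s$.} With the paper's definition as printed, $f=f(0,0)+O\bigl((h_1^2+h_2^2)^{1-s}\bigr)$, your bound $\Theta_1-\Theta_1(0,0)=O(|h|^{1/2})$ corresponds to $s=3/4$, not $s=1/2$. The $h_1^{1/2}$ term is genuinely present unless $W'''(\alpha_0)=0$. Since $O(h_1^{-1/2})\subset O(h_1^{-3/4})$, just take $s=3/4$.

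\textbf{Positivity of $\partial_v w$.} On $(\alpha_\sigma,\xi_\sigma)$ this should be read off from $2w\,\partial_v w=\sigma-p(v)>0$. Strict monotonicity of $w^2$ alone does not give it.

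\textbf{Continuity up to $\Gamma_3^\varepsilon$.} This does not come from the inverse function theorem, because $\partial_v w=0$ at $\xi_\sigma$. It follows from $\Theta_1=(z_1^{\sigma,\lambda}-\alpha_\sigma)h_1^{-1/2}$, with $h_1>0$ there, together with continuity of $z_1^{\sigma,\lambda}$.

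\textbf{The excluded corners.} Your geometric description is right: they are where $\alpha_\sigma$, respectively $\beta_\sigma$, merges with $\xi_\sigma$. By Lemma \ref{lem:7} these points are $(\underline\sigma,W(\alpha)+\underline\sigma\alpha)$ and $(\overline\sigma,W(\beta)+\overline\sigma\beta)$. So the values of $\underline\lambda,\overline\lambda$ in the statement have $\alpha$ and $\beta$ interchanged.
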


\

Lemma \ref{the properties of I} is crucial for providing the proof of the existence of implicit functions $(\sigma,\lambda)$ in the system of \eqref{restriction for admissible pair}. To achieve this, the following definition is first presented: 
\begin{definition}
Define
  \begin{equation}\label{W-i}
   \mathcal{W}_i\xlongequal{\mathrm{def}}\big\{(\sigma,\lambda)\in\overline D^{\varepsilon}\big| W''(z_1^{\sigma,\lambda})>0 \big\}.
\end{equation}
and let $\Omega_0,\Omega_1,\Omega_2$ be arbitrary closed subregions of $\overline D^{\varepsilon}$  satisfying
\begin{equation}\label{Omega-i}
  \Omega_0\subset \mathcal{W}_1\cap \mathcal{W}_2,\ \ \Omega_1\subset(\mathcal{W}_1\setminus\Gamma^{\varepsilon}_2),\ \ \Omega_2\subset(\mathcal{W}_1\setminus\Gamma^{\varepsilon}_1).
\end{equation}
\end{definition}

\

\begin{lemma}\label{the properties of I}
For $n=0,1$,  the following hold:
 \begin{equation}
 \left.\begin{array}{llll}\label{The regularity of the function I}
 \displaystyle   I_n+\big[2W''(z_i^{\sigma,\lambda})\big]^{-\frac12} (z_i^{\sigma,\lambda})^n\ln \Psi_i\in C(\Omega_i),\quad \forall i=1,2,\\ 
 \displaystyle I_n+\sum_{i=1}^2\big[2W''(z_i^{\sigma,\lambda})\big]^{-\frac12} (z_i^{\sigma,\lambda})^n\ln \Psi_i\in NR(\Omega_0)\ \ \forall i=1,2.
 \end{array}\right.\end{equation}
\end{lemma}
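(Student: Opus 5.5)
We prove Lemma~\ref{the properties of I} by splitting the singular integral at an interior point and analysing each endpoint separately, as in Carr--Gurtin--Slemrod \cite{CGS1984}. Fix $(\sigma,\lambda)\in D^\varepsilon$ and choose $m=m(\sigma)$ strictly between $z_1^{\sigma,\lambda}$ and $z_2^{\sigma,\lambda}$ and uniformly away from both. The choice $m=\xi_\sigma$ works away from $\Gamma^\varepsilon_3$, which $\Omega_i$ avoids because $W''(\xi_\sigma)<0$ there. Write
\[
I_n=\frac1{\sqrt2}\int_{z_1^{\sigma,\lambda}}^{m}s^n f_{\sigma,\lambda}(s)^{-\frac12}\,ds+\frac1{\sqrt2}\int_{m}^{z_2^{\sigma,\lambda}}s^n f_{\sigma,\lambda}(s)^{-\frac12}\,ds=:I_n^{(1)}+I_n^{(2)}.
\]
The two endpoint pieces are treated identically, so I describe $I_n^{(1)}$.

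The key step is a local normal form near the root. Since $f_{\sigma,\lambda}(\alpha_\sigma)=-h_1$ and $f'_{\sigma,\lambda}(\alpha_\sigma)=0$, Taylor's formula with $W''(\alpha_\sigma)>0$ gives
\[
f_{\sigma,\lambda}(s)=-h_1+\tfrac12 W''(\alpha_\sigma)(s-\alpha_\sigma)^2\bigl(1+O(|s-\alpha_\sigma|)\bigr).
\]
On $\mathcal W_1$, i.e.\ where $W''(z_1)>0$, I would make the $C^3$ change of variables $\tau=\tau(s)$ with $\tfrac12 W''(\alpha_\sigma)\tau^2=f_{\sigma,\lambda}(s)+h_1$. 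This is the Morse lemma with parameters, and Lemma~\ref{lem:7} supplies the $C^4$ dependence on $\sigma$. The integral then becomes
\[
\int_{\tau_1}^{\tau_m}\frac{g(\tau)\,d\tau}{\sqrt{\tfrac12W''(\alpha_\sigma)\tau^2-h_1}},\qquad g(\tau)=s^n\frac{ds}{d\tau},
\]
where $\tau_1=\sqrt{2h_1/W''(\alpha_\sigma)}$ by Lemma~\ref{lem:9}. Write $g(\tau)=g(0)+(g(\tau)-g(0))$. The constant part integrates explicitly via $\operatorname{arccosh}$ and equals
\[
-\,[2W''(\alpha_\sigma)]^{-\frac12}\alpha_\sigma^n\ln h_1+\text{(continuous)}.
\]
The remainder has integrand bounded by $|g'|\tau/\sqrt{\cdots}$, which is uniformly integrable, so it is continuous up to $h_1=0$.

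Next I replace $\alpha_\sigma$ by $z_1^{\sigma,\lambda}$ in the logarithmic coefficient. By Lemma~\ref{lem:9}, $z_1-\alpha_\sigma=O(h_1^{1/2})$, so the replacement changes the expression by $O(h_1^{1/2}\ln h_1)$, which tends to $0$. The term $I_n^{(2)}$ is regular on $\Omega_1$, because $\Omega_1$ stays away from $\Gamma_2^\varepsilon$ and hence $h_2$ is bounded below there. This gives the first assertion for $i=1$, and symmetrically for $i=2$.

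For the $NR(\Omega_0)$ statement both endpoints degenerate at once near $(\sigma_0,\lambda_0)$, where $h_1,h_2\to0$. Here I would add both logarithmic subtractions and then differentiate the regular remainders with respect to $(h_1,h_2)$ through $\mathbf\Psi^{-1}$. Differentiating under the integral in the $\tau$-variables produces terms of the form $\int \tau\,\partial_h(\cdots)^{-1/2}\,d\tau$. These are $O(\ln h_i)=O(h_i^{-s})$ for any $s\in(0,1)$, and the remainder itself is $O(h^{1-s})$, which is exactly near regularity.

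The main obstacle I expect is this derivative estimate. One must control uniformly in the two parameters the $h$-dependence of the Morse change of variables and of the endpoint $\tau_1$. One also has to check that $\mathbf\Psi$ is a local diffeomorphism near $(\sigma_0,\lambda_0)$; its Jacobian is $\beta_\sigma-\alpha_\sigma\ne0$, by the computation in Lemma~\ref{properties of w}.
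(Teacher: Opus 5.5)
Your overall strategy matches the paper's (which only sketches it and defers to Carr--Gurtin--Slemrod): isolate the two nearly degenerate endpoints, extract an explicit logarithm there, and show the remainder is continuous, resp.\ nearly regular after composing with $\mathbf{\Psi}^{-1}$, whose Jacobian is indeed $\beta_\sigma-\alpha_\sigma$. The local normal form is different.

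The paper cuts out $[z_1,z_1+\eta]$ and $[z_2-\eta,z_2]$ and Taylor-expands about the \emph{root}: $f_{\sigma,\lambda}(z_1+t)\approx\tfrac12W''(z_1)(t^2+2\varsigma t)$ with $\varsigma=f'_{\sigma,\lambda}(z_1)/W''(z_1)$. This gives \eqref{J01} and the coefficient $[2W''(z_1)]^{-1/2}z_1^n$ directly. One must still control the Taylor remainder and convert $\ln\varsigma$ into $\tfrac12\ln\Psi_1$.

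You instead expand about the \emph{critical point} $\alpha_\sigma$ with an exact Morse variable. Since $f_{\sigma,\lambda}+h_1$ does not depend on $\lambda$, your left piece depends only on $(\sigma,h_1)$ and the logarithm is exact. The nearly-regular estimates then reduce to differentiating a regular integral, e.g.\ in $u=\sqrt{A\tau^2-h_1}$, where $\partial_{h_1}$ costs $O(|\ln h_1|)$ and $\partial_\sigma$ is bounded. The price is the replacement $\alpha_\sigma\to z_1$ in the coefficient, which you check only for continuity. For the $NR(\Omega_0)$ claim it adds $O(h_1^{1/2}|\ln h_1|)$ to the value and $O(h_1^{-1/2}|\ln h_1|)$ to $\partial_{h_1}$. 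That is fine for $s$ close to $1$, but not for every $s\in(0,1)$ as you assert.

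Two points need repair.

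\textbf{The constant.} You define $I_n^{(1)}$ with the prefactor $1/\sqrt2$ from \eqref{restriction for admissible pair}. Your arccosh computation gives $-[2W''(\alpha_\sigma)]^{-1/2}\alpha_\sigma^n\ln h_1$ only for the integral \emph{without} that prefactor. With it, the singular part is $-[4W''(\alpha_\sigma)]^{-1/2}\alpha_\sigma^n\ln h_1$, and the combination in the statement still diverges like $(1-2^{-1/2})[2W''(\alpha_\sigma)]^{-1/2}\alpha_\sigma^n\ln h_1$. The coefficient $[2W''(z_i)]^{-1/2}$ is correct for the unnormalized integrals $\int_{z_1}^{z_2}s^nf_{\sigma,\lambda}^{-1/2}\,ds$. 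These are what the paper's $J_{n1},J_{n2},J_{n3}$ actually are, so state explicitly that this is the normalization you prove.

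\textbf{The split point $m=\xi_\sigma$.} This choice is incompatible with your Morse variable. The relation $\tfrac12W''(\alpha_\sigma)\tau^2=(W(s)+\sigma s)-(W(\alpha_\sigma)+\sigma\alpha_\sigma)$ has $d\tau/ds=0$ at $s=\xi_\sigma$, so $g=s^n\,ds/d\tau$ blows up at $\tau_m$. Consequently the bound $|g(\tau)-g(0)|\le C\tau$ fails there, and differentiating under the integral in $\tau$ (needed for the NR estimates) is not legitimate near $\tau_m$. Instead use split points $m_1<\xi_\sigma<m_2$ uniformly inside $(\alpha_\sigma,\xi_\sigma)$ and $(\xi_\sigma,\beta_\sigma)$. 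Treat the middle integral, where $f_{\sigma,\lambda}$ is bounded below, separately; this is exactly the role of $J_{n3}$ in the paper.

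Finally, with \eqref{Gamma-i} as printed, $\Gamma_2^\varepsilon$ is the curve $h_1=0$. Your reading $\Gamma_2^\varepsilon=\{h_2=0\}$ is the one consistent with Lemma~\ref{lem:8}, and the only one under which the first claim is true, so say so.
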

\begin{proof}
The integrals $I_0$ and $I_1$ defined in \eqref{restriction for admissible pair} have integrands that are singular at $v = v_i^{\sigma,\lambda}$. We therefore fix a small $a > 0$ and decompose each $I_n$ as
\begin{equation}\label{The decomposition form of the function I}
I_n = J_{n1} + J_{n2} + J_{n3} \qquad (n=0,1),
\end{equation}
where
\begin{equation*}
  J_{n1} =\int_{z_1^{\sigma,\lambda}}^{z_1^{\sigma,\lambda}+\eta}s^n\big( f_{\sigma,\lambda}(s)\big)^{-\frac12}ds,  
  J_{n2} = \int_{z_2^{\sigma,\lambda}-\eta}^{z_2^{\sigma,\lambda}}s^n\big( f_{\sigma,\lambda}(s)\big)^{-\frac12}ds, 
  J_{n3}= \int_{z_1^{\sigma,\lambda}+\eta}^{z_2^{\sigma,\lambda}-\eta}s^n\big( f_{\sigma,\lambda}(s)\big)^{-\frac12}ds. 
\end{equation*}
Applying Taylor's formula with remainder yields the estimates stated in \eqref{The regularity of the function I}, moreover, 
\begin{equation}\label{J01}
  J_{01}=\frac{2}{\sqrt{2W''(z_1^{\sigma,\lambda})}}\Big(\ln \big(\eta+\varsigma+\big(\eta^2+2\varsigma\eta\big)^{\frac12}\big)-\ln\varsigma\Big)+g(\sigma,\lambda,\eta),
\end{equation}
where 
$$\varsigma=\frac{W'(v)-W'(\alpha_\sigma)}{W''(v)},$$
 and $g(\sigma,\lambda,\eta)$ continuous on $\Omega_1\cup\Omega_0\times[0,\eta]$.
 The detailed computations are similar to those given in Carr--Gurtin--Slemrod \cite{CGS1984} and are therefore omitted here.
\end{proof}

\

To establish the exponential decay estimate  of $\bigl| z_1^{\sigma,\lambda} - \alpha_0 \bigr|$ and $\bigl| z_2^{\sigma,\lambda} - \beta_0 \bigr|$,  we employ a scaling transformation characterized by
\begin{equation}\label{transformation of h}
h_i = e^{\mu_i k_i}e^{ - \frac{c_i(\bar v)}{\varepsilon}}, \qquad i=1,2,
\end{equation}
valid for $\mathbf{k} = (k_1, k_2) \in \mathbb{R}^2$, $\varepsilon > 0$, and $\bar{v} \in (\alpha_0, \beta_0)$. The parameters are specified by
\begin{equation}\label{parameters of h}
\left.\begin{array}{llll}
 \displaystyle \mu_i=\big[B_i(\beta_0-\alpha_0)\big]^{-1}, B_1=[2W''(\alpha_0)]^{-\frac12},B_2=[2W''(\beta_0)]^{-\frac12},\\
 \displaystyle c_1(\bar v)=\frac{2(\beta_0-\bar v)}{B_1(\beta_0-\alpha_0)}>0,\qquad c_2(\bar v)=\frac{2(\bar v-\alpha_0)}{B_2(\beta_0-\alpha_0)}>0,
\end{array}
\right.\end{equation}
with $\bar{v}$ situated in the Maxwell construction region. We now compute the corresponding pair $(\sigma, \lambda)$ from \eqref{h-Phi}$_2$,  that is,
\begin{equation}\label{Inverse function-1}
  (\sigma, \lambda) = \mathbf{\Psi}^{-1}(h_1,h_2),\qquad \forall (h_1, h_2) \in \mathcal{H},
\end{equation}
substituting\eqref{transformation of h} into\eqref{Inverse function-1}, one has
\begin{equation}\label{Inverse function}
(\sigma, \lambda) = \Psi^{-1}\big(e^{\mu_1k_1}e^{-\frac{c_1(\bar v)}{\varepsilon}}, e^{\mu_2k_2}e^{-\frac{c_2(\bar v)}{\varepsilon}}\big).
\end{equation}
Conversely, for any $(\sigma, \lambda) \in D^{\varepsilon,\bar{v}}$ and $\varepsilon > 0$, the vector $\mathbf{k}=(k_1,k_2)$ is determined by solving equation \eqref{Inverse function}. Applying the implicit function theorem in this context, however, poses a significant challenge, because the integrand in the definition of $I_n$ (see \eqref{restriction for admissible pair}) becomes singular at the points $z_1^{\sigma,\lambda}$ and $z_2^{\sigma,\lambda}$. Before establishing the existence of the inverse function $\mathbf{k}=(k_1,k_2)$, we first outline several properties of the ``\textbf{nearly regular}'' class of functions introduced by Carr-Gurtin-Slemrod \cite{CGS1984}.

\

\begin{lemma}\label{nr-property}
Let
\begin{equation}
f = f_0 + f_1\ln\Psi_1 + f_2\ln\Psi_2,
\end{equation}
where
\begin{equation}
f_0,f_1,f_2 \in NR(\Omega_0), \qquad f_1(\sigma_0,\lambda_0)=f_2(\sigma_0,\lambda_0)=0.
\end{equation}
Then there exists a neighborhood $\mathcal{H}$ of $\mathcal{E}_0$ such that $f(\sigma(k_1,k_2,\varepsilon,\bar v),\lambda(k_1,k_2,\varepsilon,\bar v))$ admits a $C^1$ extension to $\mathcal{H}$ and $\nabla_{k_1,k_2,\varepsilon,\bar v} f= 0$ on $\mathcal{E}_0$. Here
\begin{equation}\label{H-E0}
\begin{aligned}
\mathcal{E}_0 &= \big\{(k_1,k_2,\varepsilon,\bar v) \mid (k_1,k_2)\in\mathbb{R}^2,\varepsilon=0,\bar v\in(\alpha_0,\beta_0) \big\},\\
\mathcal{H} &\text{ is a neighborhood of } \mathcal{E}_0,\\
\mathcal{H}^+ &= \big\{(k_1,k_2,\varepsilon,\bar v) \mid \varepsilon > 0 \big\}.
\end{aligned}
\end{equation}
\end{lemma}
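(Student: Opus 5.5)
The plan is to reduce everything to a statement about a single function of one variable near $(h_1,h_2)=(0,0)$, then compose with the exponential map $(k_1,k_2,\varepsilon,\bar v)\mapsto (h_1,h_2)$ of \eqref{transformation of h}. Write $H_i(\sigma,\lambda)=\Psi_i$. Since $f_1,f_2$ vanish at $(\sigma_0,\lambda_0)$ and are nearly regular there, I expect $f_i(\Psi^{-1}(h))=O(|h|^{2(1-s)})$ together with $\nabla f_i=(O(h_1^{-s}),O(h_2^{-s}))$. Along the substitution $h_i=e^{\mu_ik_i}e^{-c_i(\bar v)/\varepsilon}$ we have $\ln\Psi_i=\mu_ik_i-c_i(\bar v)/\varepsilon$, which is only algebraically large in $1/\varepsilon$. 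By contrast, $h_i$ itself, and any positive power of it, is exponentially small. The products $f_i\ln\Psi_i$ are therefore of size $e^{-c/\varepsilon}/\varepsilon$, and they tend to $0$ as $\varepsilon\to0^+$, locally uniformly in $(k_1,k_2,\bar v)$ with $\bar v$ ranging over compacts of $(\alpha_0,\beta_0)$.

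The first step is to define the extension. On $\mathcal H^+$ we set $F(k,\varepsilon,\bar v)=f(\sigma,\lambda)$. On $\mathcal E_0$ we set $F=f_0(\sigma_0,\lambda_0)$, since $h\to0$ forces $(\sigma,\lambda)\to(\sigma_0,\lambda_0)$ (Lemma \ref{lem:9} and the definition of $\Psi$). For $\varepsilon<0$ we extend by the value on $\mathcal E_0$, or equivalently by even reflection; only one-sided $C^1$ is really needed, and a Whitney-type trivial extension suffices. Continuity at $\mathcal E_0$ follows from $f_0(\Psi^{-1}h)-f_0(0)=O(|h|^{2(1-s)})$ and the bound on $f_i\ln\Psi_i$ above.

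The second step is to compute derivatives by the chain rule on $\mathcal H^+$. We have $\partial h_i/\partial k_i=\mu_ih_i$, $\partial h_i/\partial\varepsilon=c_i h_i/\varepsilon^2$ and $\partial h_i/\partial\bar v=-c_i'(\bar v)h_i/\varepsilon$. Each derivative of $h_i$ is therefore $h_i$ times a factor polynomial in $1/\varepsilon$. For the $f_0$ term the gradient estimate gives $\partial_{h_i}f_0\cdot\partial h_i=O(h_i^{1-s})\cdot\mathrm{poly}(1/\varepsilon)$, which is exponentially small. For $f_i\ln\Psi_i$, the product rule gives $(\nabla f_i\cdot\partial h)\ln\Psi_i+f_i\,\partial(\ln\Psi_i)$. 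The first piece is $O(h^{1-s})\,\mathrm{poly}(1/\varepsilon)$. In the second piece, $\partial\ln\Psi_i\in\{\mu_i,\ c_i/\varepsilon^2,\ -c_i'/\varepsilon\}$, multiplied by $f_i=O(|h|^{2(1-s)})$. All of these tend to $0$ uniformly on compacts as $\varepsilon\to0^+$. A mean-value argument then shows that the derivative of $F$ exists on $\mathcal E_0$ and equals $0$, and that $\nabla F$ is continuous up to $\mathcal E_0$. This yields the claimed $C^1$ extension with $\nabla_{k_1,k_2,\varepsilon,\bar v}F=0$ on $\mathcal E_0$.

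The main obstacle is making the nearly-regular estimates uniform when both $h_1$ and $h_2$ tend to $0$ at different exponential rates. It also requires care that the gradient bound $O(h_i^{-s})$, multiplied by $\partial h_i$, produces $h_i^{1-s}$ in the correct component. Cross terms of the form $\partial_{h_1}f\cdot\partial h_2$ do not arise, because each $h_i$ depends only on $k_i$ (and on $\varepsilon,\bar v$). Finally, $\mathcal H$ should be chosen so that $e^{\mu_ik_i-c_i/\varepsilon}$ stays inside $\Psi(\Omega_0)$. Taking $\mathcal H=\{|k|<K,\ 0\le\varepsilon<\varepsilon_K,\ \bar v\in[\alpha_0+\delta,\beta_0-\delta]\}$ and exhausting over $K,\delta$ gives the neighborhood.
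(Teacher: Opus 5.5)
Your proof is correct and takes essentially the route the paper relies on: the paper gives no proof of its own and quotes the lemma from Carr--Gurtin--Slemrod, and the cited result rests on the same direct chain-rule estimate (each derivative of $h_i$ is $h_i$ times a power of $1/\varepsilon$, so near regularity makes every term $O(h_i^{1-s})$ times a polynomial in $1/\varepsilon$, while $f_i(\sigma_0,\lambda_0)=0$ handles the terms $f_i\,\partial(\ln\Psi_i)$ such as $\mu_i f_i$), followed by extension by the constant $f_0(\sigma_0,\lambda_0)$ for $\varepsilon\le 0$. One small imprecision: even reflection in $\varepsilon$ is not ``equivalent'' to the constant extension, although it also gives a $C^1$ extension because $\partial_\varepsilon F\to 0$ as $\varepsilon\to 0^+$.
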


\

By applying Lemmas \ref{the properties of I} and \ref{nr-property}, we obtain the following key proposition \ref{existence of k1 and k2}, which ensures the existence of the implicit functions $\mathbf{k} = (k_1, k_2)$. Consequently, the \textbf{two-smooth-interface phase transition solution} \( v^{\varepsilon}(y) \)  of system \eqref{Approximate ns-lagrange} exists and is unique.

\


\begin{proposition}\label{existence of k1 and k2}
There exist a neighborhood $\mathcal{H}$ of $\mathcal{E}_0$ and a $C^1$ function $\mathbf{F}=(F_1,F_2)$ on $\mathcal{H}$ such that $\nabla F_i = 0$ on $\mathcal{E}_0$ for $i=1,2$. For $(k_1,k_2,\varepsilon,\bar v)\in \mathcal{H}^+$ and $(\sigma,\lambda)=\Phi^{-1}(h_1,h_2)$, the equation
\begin{equation}\label{existence of k}
\mathbf{k}=(k_1,k_2) = \mathbf{F}(k_1,k_2,\varepsilon,\bar v)
\end{equation}
holds if and only if $(\sigma,\lambda)$ solves \eqref{restriction for admissible pair}.

Moreover, given $\delta>0$, there exist a neighborhood $B_\delta$ of $\mathbf{a}\in\mathbb{R}^2$ and a constant $\bar\varepsilon=\bar\varepsilon_\delta>0$ such that $B_\delta\times[-\bar\varepsilon,\bar\varepsilon]\times [\alpha_0+\delta,\beta_0-\delta]\subset \mathcal{H}$, and a $C^1$ function
\begin{equation}\label{k-delta}
\mathbf{k}_\delta: [-\bar\varepsilon,\bar\varepsilon]\times [\alpha_0+\delta,\beta_0-\delta] \to B_\delta,
\end{equation}
satisfying: for each $(\varepsilon,\bar v)\in[-\bar\varepsilon,\bar\varepsilon]\times [\alpha_0+\delta,\beta_0-\delta]$, equation \eqref{existence of k} has a unique solution $\mathbf{k}\in B_\delta$, given by
\begin{equation}\label{existence of k-delta}
\mathbf{k} = \mathbf{k}_\delta(\varepsilon,\bar v).
\end{equation}
\end{proposition}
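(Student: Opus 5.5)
The plan follows the Carr--Gurtin--Slemrod strategy: rewrite the constraint system \eqref{restriction for admissible pair} as a fixed-point equation for $\mathbf{k}$, then apply the implicit function theorem at $\varepsilon=0$.

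\textbf{Step 1: split off the logarithmic singularities.} By Lemma \ref{the properties of I} on $\Omega_0$ (a closed neighbourhood of $(\sigma_0,\lambda_0)$, where $z_1\to\alpha_0$ and $z_2\to\beta_0$), we can write
\begin{equation*}
\sqrt2\, I_n=g_n-\sum_{i=1}^2 \big[2W''(z_i^{\sigma,\lambda})\big]^{-\frac12}(z_i^{\sigma,\lambda})^n\ln\Psi_i ,
\end{equation*}
with $g_n\in NR(\Omega_0)$. The scaling constant $\sqrt2$ is harmless, so I absorb it and do not track it.

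We then split each coefficient as $[2W''(z_i)]^{-1/2}z_i^n=B_i\gamma_i^n+f_{n,i}$, where $\gamma_1=\alpha_0$ and $\gamma_2=\beta_0$. By Lemma \ref{lem:9} and the $C^4$ regularity of $z_i$, each $f_{n,i}$ lies in $NR(\Omega_0)$ and vanishes at $(\sigma_0,\lambda_0)$.

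\textbf{Step 2: solve the linear system for $\ln h_i$.} Substitute $\ln h_i=\mu_ik_i-c_i(\bar v)/\varepsilon$ and multiply by $\varepsilon$. The leading part of the constraints $I_0=1/\varepsilon$ and $I_1=\bar v/\varepsilon$ becomes a linear system in $(\ln h_1,\ln h_2)$ with matrix
\begin{equation*}
\begin{pmatrix} B_1 & B_2\\ B_1\alpha_0 & B_2\beta_0\end{pmatrix},
\end{equation*}
whose determinant is $B_1B_2(\beta_0-\alpha_0)\neq0$. Inverting it, the $1/\varepsilon$ terms cancel exactly because of the choice of $c_i(\bar v)$ in \eqref{parameters of h}; this matches the interval lengths $l_1,l_2$ in \eqref{lii}. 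The $O(1)$ remainder is then solved for $k_i$, using $\mu_i=[B_i(\beta_0-\alpha_0)]^{-1}$.

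The result is $\mathbf{k}=\mathbf{F}$, where each $F_i$ is a linear combination, with constant coefficients, of
\begin{equation*}
g_n \qquad\text{and}\qquad f_{n,j}\ln\Psi_j=f_{n,j}\big(\mu_jk_j-c_j/\varepsilon\big).
\end{equation*}
This is exactly the form $f_0+f_1\ln\Psi_1+f_2\ln\Psi_2$ treated in Lemma \ref{nr-property}, with $f_1(\sigma_0,\lambda_0)=f_2(\sigma_0,\lambda_0)=0$.

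Lemma \ref{nr-property} then gives a neighbourhood $\mathcal H$ of $\mathcal E_0$ on which $\mathbf F$ has a $C^1$ extension with $\nabla\mathbf F=0$ on $\mathcal E_0$. The equivalence between \eqref{existence of k} and \eqref{restriction for admissible pair} on $\mathcal H^+$ holds because every step is invertible: $\Psi$ is a bijection by Lemma \ref{lem:8} and Lemma \ref{lem:9}, and $\mathbf k\mapsto(h_1,h_2)$ is a bijection for each $\varepsilon>0$.

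\textbf{Step 3: implicit function theorem.} Consider
\begin{equation*}
\mathbf{G}(\mathbf k,\varepsilon,\bar v)=\mathbf k-\mathbf F(\mathbf k,\varepsilon,\bar v).
\end{equation*}
At $\varepsilon=0$ the function $\mathbf F$ is constant in $\mathbf{k}$, since its gradient vanishes on $\mathcal E_0$. Set $\mathbf a=\mathbf F(\cdot,0,\bar v)$; this is the value of the $g_n$-combination at $(\sigma_0,\lambda_0)$, which should not depend on $\bar v$. Then $\mathbf G(\mathbf a,0,\bar v)=0$ and $D_{\mathbf k}\mathbf G=\mathrm{Id}$ there.

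Since $[\alpha_0+\delta,\beta_0-\delta]$ is compact, the implicit function theorem, uniform in $\bar v$, gives a ball $B_\delta$ around $\mathbf a$, a constant $\bar\varepsilon_\delta>0$, and a unique $C^1$ map $\mathbf k_\delta(\varepsilon,\bar v)\in B_\delta$. Equivalently, $\mathbf F$ is a contraction on $B_\delta$ for $|\varepsilon|\le\bar\varepsilon$, because its Lipschitz constant in $\mathbf{k}$ is small near $\mathcal E_0$.

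\textbf{Main obstacle.} The delicate step is verifying the hypothesis of Lemma \ref{nr-property} uniformly. Near $(\sigma_0,\lambda_0)$ both $h_i$ are exponentially small, and we need $f_{n,i}$ to vanish fast enough that $f_{n,i}\,c_i/\varepsilon$ stays $C^1$ as $\varepsilon\to0$.

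This is where nearly-regular decay does the work. We have $f_{n,i}=O(|h|^{2(1-s)})$, and $|h|\sim e^{-c/\varepsilon}$, which beats the factor $1/\varepsilon$. The same must be checked for the derivatives, where the $h_i^{-s}$ bounds multiply $\partial h_i/\partial(\mathbf k,\varepsilon)=O(h_i/\varepsilon^2)$. I would carry this out following \cite{CGS1984}.
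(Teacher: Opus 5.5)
Your proposal is correct and follows the paper's route. The paper gives no argument beyond invoking Lemmas \ref{the properties of I} and \ref{nr-property} in the manner of Carr--Gurtin--Slemrod. Your Steps 1--3 make that argument explicit: splitting off the logarithms, inverting the $2\times 2$ system in $(\ln h_1,\ln h_2)$, taking $\mathbf a$ as the constant value of $\mathbf F$ on the connected set $\mathcal E_0$, and applying the implicit function theorem uniformly in $\bar v$.

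One caveat: the $\sqrt2$ is not harmless. With the constraints written as $\sqrt2\,I_0=\sqrt2/\varepsilon$ and $\sqrt2\,I_1=\sqrt2\,\bar v/\varepsilon$, and your (correct) log coefficients $[2W''(z_i^{\sigma,\lambda})]^{-1/2}(z_i^{\sigma,\lambda})^n$, the $1/\varepsilon$ terms cancel only for
$c_1=\sqrt2(\beta_0-\bar v)/[B_1(\beta_0-\alpha_0)]$ and $c_2=\sqrt2(\bar v-\alpha_0)/[B_2(\beta_0-\alpha_0)]$.
They do not cancel with the factor $2$ in \eqref{parameters of h}. You should therefore define $c_i$ by this cancellation requirement rather than quote it. Otherwise a residual $O(1/\varepsilon)$ term remains in $\mathbf F$, and the $C^1$ extension to $\varepsilon=0$ fails.
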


\

Consequently, by combining the transformation rule \eqref{transformation of h} with the existence result for the pair \( (k_1,k_2) \) (Proposition~\ref{existence of k1 and k2}), we obtain the following exponential decay estimates for $\bigl| z_1^{\sigma,\lambda} - \alpha_0 \bigr|$ and 
$\bigl| z_2^{\sigma,\lambda} - \beta_0 \bigr|$.

\begin{definition}
Let \( v^{\varepsilon}(y) \) denote the two-smooth-interface phase transition solution of \eqref{Approximate ns-lagrange} under the constraint \eqref{constraint for v,u}. This solution is globally either single-valley or single-peak, with its monotonicity changing at exactly one point within the interval \( \big[-\tfrac{1}{\varepsilon}, \tfrac{1}{\varepsilon}\big] \). 
We focus on the globally single-valley case (the single-peak case is analogous) and define the following two “times” (i.e., increments in the independent variable \( y \)):
\begin{itemize}
    \item \( T_1^{\varepsilon} \) is the increment in \( y \) required for \( v^{\varepsilon} \) to increase from its initial value \( v^{\varepsilon}\bigl(-\tfrac{1}{\varepsilon}\bigr) \) to \( v^{\varepsilon}\bigl(-\tfrac{1}{\varepsilon}\bigr) + \varepsilon \);
    \item \( T_2^{\varepsilon} \) is the increment in \( y \) required for \( v^{\varepsilon} \) to decrease from \( v^{\varepsilon}\bigl(-\tfrac{1}{\varepsilon}\bigr) - \varepsilon \) back to \( v^{\varepsilon}\bigl(-\tfrac{1}{\varepsilon}\bigr) \).
\end{itemize}
\end{definition}

\begin{proposition}\label{prop:two-interface-phase-transition}
For any $\delta > 0$, there exist constants $\varepsilon_\delta, C_\delta > 0$, depending only on $\delta$, such that for all $\varepsilon \in (0, \varepsilon_\delta)$ and $\bar{v} \in (\alpha_0 + \delta, \beta_0 - \delta)$, the system \eqref{Approximate ns-lagrange} under the constraint \eqref{constraint for v,u} admits at least one two-smooth-interface phase transition solution of either single-valley type  or single-peak type, corresponding to parameters $(\lambda, \sigma)$. Moreover, as $\varepsilon \to 0$, the following estimates hold:
\begin{equation}\label{decay-1}
\bigl| z_1^{\sigma,\lambda} - \alpha_0 \bigr| = O\bigl(e^{-C_\delta / \varepsilon}\bigr), \qquad
\bigl| z_2^{\sigma,\lambda} - \beta_0 \bigr| = O\bigl(e^{-C_\delta / \varepsilon}\bigr).
\end{equation}
\begin{equation}\label{decay-2}
\bigl| \alpha_\sigma - \alpha_0 \bigr| = O\bigl(e^{-C_\delta / \varepsilon}\bigr), \qquad
\bigl| \beta_\sigma - \beta_0 \bigr| = O\bigl(e^{-C_\delta / \varepsilon}\bigr).
\end{equation}
Furthermore,
\begin{equation}\label{T1 and T2}
T_i^{\varepsilon} = \frac{l_i}{\varepsilon} + O(\ln \varepsilon), \qquad i = 1,2,
\end{equation}
where $l_1$ and $l_2$ are defined in \eqref{li}.
\end{proposition}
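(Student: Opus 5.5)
The plan is to read everything off Proposition \ref{existence of k1 and k2} together with the transformation \eqref{transformation of h}.

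\textbf{Existence.} Fix $\delta>0$. Take the $C^1$ map $\mathbf{k}_\delta$ from \eqref{k-delta} and set $\varepsilon_\delta=\bar\varepsilon_\delta$. For $\varepsilon\in(0,\varepsilon_\delta)$ and $\bar v\in(\alpha_0+\delta,\beta_0-\delta)$, define
\[
h_i=e^{\mu_ik_i}e^{-c_i(\bar v)/\varepsilon}, \qquad \mathbf{k}=\mathbf{k}_\delta(\varepsilon,\bar v),
\]
and put $(\sigma,\lambda)=\mathbf{\Psi}^{-1}(h_1,h_2)$.

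By Proposition \ref{existence of k1 and k2}, this $(\sigma,\lambda)$ lies in $D^\varepsilon$ and solves \eqref{restriction for admissible pair}. The one-to-one correspondence lemma then gives a two-interface periodic orbit. Starting it at $(z_1^{\sigma,\lambda},0)$ gives the single-valley solution; starting it at $(z_2^{\sigma,\lambda},0)$ gives the single-peak solution. Undoing the scaling $y=x/\varepsilon$ returns a solution of \eqref{Approximate ns-lagrange} satisfying the constraint \eqref{constraint for v,u}.

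\textbf{Decay estimates \eqref{decay-2}.} Since $\mathbf{k}$ ranges in the bounded set $B_\delta$, and $c_i(\bar v)$ is bounded below by a positive constant on $[\alpha_0+\delta,\beta_0-\delta]$ by \eqref{parameters of h}, we get
\[
h_i\le Ce^{-2C_\delta/\varepsilon}.
\]
Next, $\mathbf{\Psi}$ is a local diffeomorphism near $(\sigma_0,\lambda_0)$. Its Jacobian involves $-\alpha_\sigma$ and $-\beta_\sigma$ in the $\sigma$-column and $1$ in the $\lambda$-column, so its determinant is proportional to $\beta_\sigma-\alpha_\sigma\neq0$. Hence
\[
|\sigma-\sigma_0|+|\lambda-\lambda_0|\le C(h_1+h_2).
\]
Lemma \ref{lem:7} makes $\sigma\mapsto\alpha_\sigma,\beta_\sigma$ Lipschitz near $\sigma_0$, with $\alpha_{\sigma_0}=\alpha_0$ and $\beta_{\sigma_0}=\beta_0$. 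This gives \eqref{decay-2}.

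\textbf{Decay estimates \eqref{decay-1}.} Lemma \ref{lem:9} gives
\[
|z_1^{\sigma,\lambda}-\alpha_\sigma|\le Ch_1^{1/2}, \qquad |z_2^{\sigma,\lambda}-\beta_\sigma|\le Ch_2^{1/2},
\]
with $\Theta_i$ bounded near $(\sigma_0,\lambda_0)$. This is exponentially small after halving the constant, and combining with \eqref{decay-2} gives \eqref{decay-1}.

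\textbf{Time asymptotics \eqref{T1 and T2}.} By the first integral \eqref{first integral}, the time the orbit spends near $z_i$ is a $J$-type integral, of the form $\frac1{\sqrt2}\int (f_{\sigma,\lambda})^{-1/2}\,ds$, over the portion of the orbit near $z_i$. I would split $I_0$ as in \eqref{The decomposition form of the function I}. The plan is to show that the time in the $\varepsilon$-neighbourhood of the two wells is dominated by the logarithmic terms of Lemma \ref{the properties of I}, namely $B_i|\ln h_i|$, while the transit part $J_{03}$ and the remainders are $O(1)$ or $O(|\ln\varepsilon|)$. The $O(|\ln\varepsilon|)$ contribution comes from cutting the integration at distance $\varepsilon$ rather than at a fixed $\eta$: near a nondegenerate minimum, $\int_\varepsilon^\eta ds/s=O(|\ln\varepsilon|)$. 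Then
\[
B_i|\ln h_i|=B_i\Bigl(\frac{c_i(\bar v)}{\varepsilon}-\mu_ik_i\Bigr),
\]
and by \eqref{parameters of h}, $B_ic_i(\bar v)=l_i$ from \eqref{li}, while $\mu_ik_i=O(1)$ since $\mathbf{k}\in B_\delta$. This gives
\[
T_i^\varepsilon=\frac{l_i}{\varepsilon}+O(\ln\varepsilon).
\]

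\textbf{Expected main obstacle.} The delicate point is the last step. One must match the paper's definition of $T_i^\varepsilon$ (levels $v(-1/\varepsilon)\pm\varepsilon$) with the decomposition of $I_0$. One must also control the remainder $g$ in \eqref{J01} uniformly when $\eta$ is replaced by an $\varepsilon$-dependent cutoff. I would handle this with the explicit logarithmic formula \eqref{J01} and the uniform continuity of $g$ on the compact parameter set.
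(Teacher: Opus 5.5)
Your proposal follows the paper's sketched proof. You take $\mathbf{k}_\delta$ from Proposition \ref{existence of k1 and k2}, set $(\sigma,\lambda)=\mathbf{\Psi}^{-1}(h_1,h_2)$ with $h_i$ as in \eqref{transformation of h}, read the exponential estimates off the smallness of $h_i$, and get $T_i^\varepsilon$ from the logarithmic formula \eqref{J01} with $B_ic_i(\bar v)=l_i$. Along the way you make explicit what the paper leaves to Carr--Gurtin--Slemrod: the Jacobian $\beta_\sigma-\alpha_\sigma$ of $\mathbf{\Psi}$, Lemma \ref{lem:9} for $z_i^{\sigma,\lambda}$, and the $O(|\ln\varepsilon|)$ cost of cutting at distance $\varepsilon$ rather than at a fixed $\eta$, which the paper skips by simply writing $T_1^\varepsilon=J_{01}$. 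One small slip: the orbit started at $(z_1^{\sigma,\lambda},0)$ rises to $z_2^{\sigma,\lambda}$ and returns, so it gives the single-peak profile matching $\mathcal{V}_{\mathrm{single\text{-}peak}}$, while starting at $(z_2^{\sigma,\lambda},0)$ gives the single-valley one.
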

\begin{proof}
We provide only an outline of the proof; full details can be found in the work of Carr--Gurtin--Slemrod \cite{CGS1984}.
Applying \eqref{Inverse function-1} with $\mathbf{k}_\delta$ given by Proposition 3.1, we conclude that system \eqref{ODE under scaling transformation} admits at least one two-smooth-interface phase transition solution of single-valley or single-peak type, denoted by $v^{\varepsilon}$, corresponding to the parameters $(\lambda, \sigma)$ defined in \eqref{Inverse function}. 
From Lemma \ref{the properties of I} and \eqref{J01}, it follows directly that
\begin{equation*}
  T_1^{\varepsilon}=J_{01}=\frac{2}{\sqrt{2W''(z_1^{\sigma,\lambda})}}\Big(\ln \big(\eta+\varsigma+\big(\eta^2+2\varsigma\eta\big)^{\frac12}\big)-\ln\varsigma\Big)+g(\sigma,\lambda,\eta),
\end{equation*}
where $\varsigma=\frac{W'(v)-W'(\alpha_\sigma)}{W''(v)}$ and $g(\sigma,\lambda,\eta)$ continuous on $\Omega_1\cup\Omega_0\times[0,\eta]$.
Combining this identity with \eqref{transformation of h} and \eqref{Inverse function-1} yields \eqref{T1 and T2}.
\end{proof}

\

Finally, we will demonstrate that the phase transition solution with two interfaces corresponds to the global minimizer of the minimum value of the variational problem as following:
\begin{equation}\label{Energy}
E^{\varepsilon}(v)=\varepsilon\int_{-\frac1\varepsilon}^{\frac1\varepsilon}\big(W(v)+\frac{1}{2}v_y^2\big)dy,
\end{equation}
over the following set of functions
\begin{equation}\label{the set of H}
 H^1_{\mathrm{per},\bar v}=\Big\{f(x)\in  H^1_{\mathrm{per}}([-\varepsilon^{-1},\varepsilon^{-1}])\Big|\int_{-\frac{1}{\varepsilon}}^{\frac{1}{\varepsilon}}f(x)dx=\frac{2\bar v}{\varepsilon}\Big\}.
\end{equation}

Similar to the derivation of Lemma \ref{equivalent critical points}, classical variational theory ensures the existence of a smooth minimizer for the functional \eqref{Energy} within the admissible class \eqref{the set of H}; any such minimizer must satisfy the associated Euler-Lagrange equation \eqref{Approximate ns-lagrange} under the constraint \eqref{constraint for v,u}.
It follows that the two-smooth-interface phase-transition solution $ v^{\varepsilon}(y) $ derived in proposition \ref{existence of k1 and k2} corresponds to a local minimum of the energy functional. On both computational (see Hsieh-Wang \cite{HW-1997}, He-Liu-Shi \cite{HLS} and references therein) and intuitive grounds, we further contend that this solution in fact furnishes the global energy minimizer among all admissible variational solutions of \eqref{Energy} within the admissible class \eqref{the set of H}.

This physical insight reveals that, although discontinuous density profiles—such as those in \eqref{Piecewise smooth solution}, are theoretically permissible in the inviscid limit without energetic cost, introducing artificial viscosity within a periodic domain selects a single-peak (or single-valley) interface as the unique, energy-minimizing two-interface smooth phase transition solution. This physically meaningful profile is characterized by a single monotonic transition from one phase to the other and back, thereby providing a spatially regularized counterpart to the underlying discontinuous configurations.

As shown in the preceding derivation, the system \eqref{Approximate ns-lagrange} under the constraint \eqref{constraint for v,u} admits a two-interface phase transition solution, denoted by $v^\varepsilon_{\mathrm{single-valley}}$ (equivalently $v^\varepsilon_{\mathrm{single-peak}}$). We define
\begin{equation}\label{Maxwell energy}
  E^{\varepsilon}_0 = E^{\varepsilon}(v^\varepsilon_{\mathrm{single-valley}}) =E^{\varepsilon}(v^\varepsilon_{\mathrm{single-peak}}),
\end{equation}
and refer to $E_0^{\varepsilon}$ as the energy of phase transition solution with two interfaces. Now we  restrict our attention to phase transition solution with two interfaces,  by \eqref{first integral},
\begin{equation}\label{energy}
  W(v)+\frac12v_y^2=2f_{\sigma,\lambda}(v)-\sigma v+\lambda.
\end{equation}
Making use of the first relation in \eqref{Two-point boundary value condition} to change the integration variable, in conjunction with the third integral constraint given in \eqref{ODE under scaling transformation}, we find that
\begin{equation}\label{Another expression of energy}
 E^{\varepsilon}(v)=2(-\sigma \bar v+\lambda)+2\sqrt{2}\varepsilon\int_{z_1^{\sigma,\lambda}}^{z_2^{\sigma,\lambda}}f^{\frac12}_{\sigma,\lambda}(s)ds, \ \ \forall (\sigma,\lambda)\in D^{\varepsilon}(\sigma,\lambda).
\end{equation}
Consequently, the solutions of \eqref{restriction for admissible pair} correspond to the critical points of $E^{\varepsilon}$. This can be verified by direct calculation, which yields:
\begin{equation}\label{critical point}
  \frac{\partial E^{\varepsilon}}{\partial \sigma}=-2\bar v+\sqrt{2}\varepsilon \int_{z_1^{\sigma,\lambda}}^{z_2^{\sigma,\lambda}}s\big(f_{\sigma,\lambda}(s)\big)^{-\frac12}ds,\qquad  \frac{\partial E^{\varepsilon}}{\partial \lambda}=2-2\sqrt{2}\varepsilon \int_{z_1^{\sigma,\lambda}}^{z_2^{\sigma,\lambda}}\big(f_{\sigma,\lambda}(s)\big)^{-\frac12}ds.
\end{equation}

The asymptotic form of the energy for the phase transition solution with two interfaces then follows by applying proposition \ref{prop:two-interface-phase-transition} together with the monotonicity analysis of Carr-Gurtin-Slemrod \cite{CGS1984}. We state this result in the following lemma, omitting the proof to avoid redundancy.
\begin{lemma}
As $\varepsilon \to 0$, the energy $E^{\varepsilon}_0$ of the phase transition solution with two interfaces possesses the asymptotic form
\begin{equation}\label{asymptotic form}
  E^{\varepsilon}_0 = 2(-\sigma_0\bar v + \lambda_0) + \varepsilon S + O(e^{-C/\varepsilon}),
\end{equation}
uniformly for $\bar v$ in any closed subinterval of $(\alpha_0, \beta_0)$, with the constant $S$ given by
\begin{equation}\label{S}
  S = 2\int_{\alpha_0}^{\beta_0}\big[W(s)-W(\beta_0)+\sigma_0(s-\beta_0)\big]^{\frac12}\,ds.
\end{equation}
\end{lemma}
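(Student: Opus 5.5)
The starting point is the representation \eqref{Another expression of energy},
\[
E^\varepsilon(v)=2(-\sigma\bar v+\lambda)+2\sqrt2\,\varepsilon\int_{z_1^{\sigma,\lambda}}^{z_2^{\sigma,\lambda}} f_{\sigma,\lambda}^{1/2}(s)\,ds ,
\]
evaluated at the parameters $(\sigma,\lambda)=\mathbf{\Psi}^{-1}(h_1,h_2)$ produced by Proposition \ref{existence of k1 and k2}, with $h_i=e^{\mu_ik_i}e^{-c_i(\bar v)/\varepsilon}$ and $\mathbf{k}=\mathbf{k}_\delta(\varepsilon,\bar v)$ bounded. I treat the two pieces of this formula separately.

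\textbf{Step 1: the parameters are exponentially close to $(\sigma_0,\lambda_0)$.} Since $\mathbf{k}$ stays in $B_\delta$, both $h_1,h_2=O(e^{-C/\varepsilon})$, uniformly for $\bar v\in[\alpha_0+\delta,\beta_0-\delta]$. By \eqref{h-Phi}, $h_1-h_2=(W(\beta_\sigma)+\sigma\beta_\sigma)-(W(\alpha_\sigma)+\sigma\alpha_\sigma)$. By \eqref{monotonicity of W} the derivative of this quantity in $\sigma$ is $\beta_\sigma-\alpha_\sigma$, which is bounded away from $0$ near $\sigma_0$. Hence $|\sigma-\sigma_0|=O(|h_1-h_2|)=O(e^{-C/\varepsilon})$. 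Then $\lambda=h_1+W(\alpha_\sigma)+\sigma\alpha_\sigma$, and Lemma \ref{lem:7} gives smoothness of $\alpha_\sigma$, so $|\lambda-\lambda_0|=O(e^{-C/\varepsilon})$. Consequently the first term equals $2(-\sigma_0\bar v+\lambda_0)+O(e^{-C/\varepsilon})$.

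\textbf{Step 2: the integral term.} Set $K(\sigma,\lambda)=2\sqrt2\int_{z_1}^{z_2}f_{\sigma,\lambda}^{1/2}\,ds$. Unlike $I_0$ and $I_1$, this integrand is not singular: $f^{1/2}$ vanishes at the endpoints. The endpoint dependence also drops out when differentiating, because $f(z_i)=0$, so
\[
\partial_\sigma K=\sqrt2\int s f^{-1/2}\,ds,\qquad \partial_\lambda K=-\sqrt2\int f^{-1/2}\,ds ,
\]
which are essentially the quantities in \eqref{critical point}. Rather than differentiate, I would estimate $K(\sigma,\lambda)-K(\sigma_0,\lambda_0)$ directly.

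First, $z_1\to\alpha_0$ and $z_2\to\beta_0$ exponentially, by \eqref{decay-1}, so the change of the integration interval contributes $O(e^{-C/\varepsilon})$ times a bounded integrand. Second, on the common interval, $|f_{\sigma,\lambda}^{1/2}-f_{\sigma_0,\lambda_0}^{1/2}|\le |f_{\sigma,\lambda}-f_{\sigma_0,\lambda_0}|^{1/2}$. This gives only $O(e^{-C/(2\varepsilon)})$, which is still exponential, so it suffices after renaming $C$.

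The limit integrand is $f_{\sigma_0,\lambda_0}(s)=W(s)-W(\beta_0)+\sigma_0(s-\beta_0)$, since $\lambda_0=W(\beta_0)+\sigma_0\beta_0$. Hence $K(\sigma_0,\lambda_0)=2\sqrt2\int_{\alpha_0}^{\beta_0}f_{\sigma_0,\lambda_0}^{1/2}\,ds$, and multiplying by $\varepsilon$ gives $\varepsilon S$ up to the constant normalisation in \eqref{S}. I would check that normalisation against the substitution $dy=ds/\sqrt{2f}$ over the two monotone branches: each branch contributes $\int \tfrac12 v_y^2\,dy=\int\sqrt{f/2}\cdot\ldots$, so the constant has to be reconciled there.

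\textbf{Main obstacle.} The delicate point is uniformity of all the $O(e^{-C/\varepsilon})$ bounds in $\bar v$ on closed subintervals. This requires that $\mathbf{k}_\delta$ be bounded uniformly, which Proposition \ref{existence of k1 and k2} provides through the compactness of $[\alpha_0+\delta,\beta_0-\delta]$. It also requires that $\beta_\sigma-\alpha_\sigma$ be bounded below and that $f$ be uniformly bounded near $(\sigma_0,\lambda_0)$. A secondary subtlety is that the nearly-regular behaviour (Lemma \ref{lem:9}) of $z_i$ near the corner $(\sigma_0,\lambda_0)$ must still give Lipschitz-type control. If it only gives Hölder control, the exponent $C$ simply degrades, and that is harmless.
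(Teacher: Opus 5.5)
Your argument is correct, and it is essentially the route the paper intends. The paper omits the proof and says only that it follows from Proposition \ref{prop:two-interface-phase-transition} together with Carr--Gurtin--Slemrod. Your two steps supply exactly the missing estimates:
\begin{enumerate}
\item Step 1 gets exponential closeness of $(\sigma,\lambda)$ to $(\sigma_0,\lambda_0)$ from $h_1-h_2$ and the identity $\frac{d}{d\sigma}\big(W(\alpha_\sigma)+\sigma\alpha_\sigma\big)=\alpha_\sigma$.
\item Step 2 uses $|\sqrt a-\sqrt b|\le\sqrt{|a-b|}$ on the common interval, and \eqref{decay-1} for the endpoint pieces.
\end{enumerate}
You do not need any Lipschitz control of $z_i$ at the corner. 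Either \eqref{decay-1}, or directly $|z_1-\alpha_\sigma|\lesssim h_1^{1/2}$ from $W''(\alpha_\sigma)>0$, is enough.

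The one thing you should not leave open is the constant, and your own substitution settles it. On each monotone branch, $\int 2f\,dy=\sqrt2\int_{z_1}^{z_2}f^{1/2}ds$. There are two branches per period, which gives \eqref{Another expression of energy}. Hence the limiting $\varepsilon$-coefficient is
\[
2\sqrt2\int_{\alpha_0}^{\beta_0}\big[W(s)-W(\beta_0)+\sigma_0(s-\beta_0)\big]^{\frac12}ds=\sqrt2\,S,
\]
not $S$. The $S$ of \eqref{S} is the one-interface constant for the energy density $W+\varepsilon^2 v_x^2$, which is the Carr--Gurtin--Slemrod normalisation. By contrast, \eqref{Energy} is $\int_{-1}^1\big(W+\frac{\varepsilon^2}{2}v_x^2\big)dx$ with two interfaces per period. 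So your proof establishes the expansion with $\sqrt2\,S$ in place of $S$, and you should state that conclusion explicitly rather than saying it agrees ``up to normalisation.''

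The discrepancy is harmless for how the lemma is used afterwards. The comparison with constant solutions only needs the correction term to be $O(\varepsilon)$.
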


Solutions of \eqref{G} must be of one of the following types: a two-smooth-interface phase transition solution, $2N$-smooth-interface phase transition solutions with $N \geq 2$, or a constant solution.  An analysis of the second variation of the functional $E^{\varepsilon}$ shows that the two-smooth-interface solution of \eqref{G} has the least energy among the latter two types.  We formalize this conclusion in the following lemma.

\ 

\begin{lemma}
Let $\bar{v}$ belong to a closed subset of $\Omega_{\mathrm{Maxwell}}=(\alpha_0, \beta_0)$ and let $\varepsilon > 0$ be sufficiently small. Then \textbf{two-smooth-interface phase transition solution} for $(\varepsilon, \bar{v})$ minimizes the energy over the following two classes:
\begin{enumerate}
    \item[(i)] the set of all constant solutions to \eqref{ODE under scaling transformation};
    \item[(ii)] the set of all $2N$-smooth-interface phase transition solutions with $N \geq 2$ to \eqref{ODE under scaling transformation}.
\end{enumerate}
\end{lemma}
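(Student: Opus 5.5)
The plan is to compare energies through the asymptotic formula \eqref{asymptotic form}, following the scheme of Carr--Gurtin--Slemrod \cite{CGS1984}.

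\textbf{Constant solution.} The energy is exactly $E^{\varepsilon}(\bar v)=2W(\bar v)=0$, since $W$ in \eqref{W} is normalized so that $W(\bar v)=0$. The two-interface energy is
\[
E^{\varepsilon}_0=2(-\sigma_0\bar v+\lambda_0)+\varepsilon S+O(e^{-C/\varepsilon}).
\]
Because $\lambda_0=W(\alpha_0)+\sigma_0\alpha_0=W(\beta_0)+\sigma_0\beta_0$, the leading term is $2\bigl(W(\alpha_0)+\sigma_0(\alpha_0-\bar v)\bigr)$. This is twice the value at $\bar v$ of the common tangent line (the convex hull) of $W$ at $\alpha_0,\beta_0$, minus $W(\bar v)=0$.

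By the Maxwell construction, $W+\sigma_0 v$ attains its global minimum on $(b,\infty)$ exactly at $\alpha_0$ and $\beta_0$ (Lemma \ref{properties of w}(iii) with $\sigma=\sigma_0$). Hence for $\bar v$ in a closed subinterval $[\alpha_0+\delta,\beta_0-\delta]$ we get
\[
W(\bar v)+\sigma_0\bar v-\lambda_0\ge c_\delta>0 .
\]
This gives $2(-\sigma_0\bar v+\lambda_0)\le -2c_\delta$. Since $S$ in \eqref{S} is a fixed finite constant, $E^\varepsilon_0\le -2c_\delta+\varepsilon S+o(\varepsilon)<0=E^\varepsilon(\bar v)$ once $\varepsilon$ is small. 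This settles class (i).

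\textbf{$2N$-interface solutions.} A $2N$-interface solution with parameters $(\sigma,\lambda)$ satisfies \eqref{The necessary condition for the number of phase transitions}. This is precisely \eqref{restriction for admissible pair} with $\varepsilon$ replaced by $N\varepsilon$. So such a solution is $N$ copies of the two-interface solution for the rescaled parameter $\varepsilon'=N\varepsilon$, with the same $\bar v$. Repeating the computation leading to \eqref{Another expression of energy} over $N$ half-cycles gives
\[
E^{\varepsilon}(v_N)=2(-\sigma\bar v+\lambda)+2\sqrt2 N\varepsilon\int_{z_1^{\sigma,\lambda}}^{z_2^{\sigma,\lambda}}f^{1/2}_{\sigma,\lambda}\,ds = E^{N\varepsilon}_0(\bar v).
\]
Two cases follow.

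If $N\varepsilon<\varepsilon_\delta$, the uniqueness part of Proposition \ref{existence of k1 and k2} together with \eqref{asymptotic form} yields
\[
E^{\varepsilon}(v_N)=2(-\sigma_0\bar v+\lambda_0)+N\varepsilon S+O(e^{-C/(N\varepsilon)}).
\]
This exceeds $E_0^\varepsilon$ by $(N-1)\varepsilon S+O(e^{-C/(N\varepsilon)})>0$, since $S>0$ and $N\ge2$.

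If $N\varepsilon$ is not small, I instead use a uniform lower bound rather than asymptotics. For any periodic orbit, $\int W\ge$ the convex-hull bound $2(-\sigma_0\bar v+\lambda_0)$ by Jensen applied to the convexification of $W$. Each of the $2N$ transitions must cross from near $z_1$ to near $z_2$. By the Modica--Mortola inequality $\tfrac12 v_y^2+(W+\sigma_0 v-\lambda_0)\ge \sqrt2(W+\sigma_0v-\lambda_0)^{1/2}|v_y|$, each transition contributes at least a fixed positive amount $\varepsilon\cdot c$ of excess energy, uniformly in the amplitude. The amplitude is bounded below because nonconstant orbits in $D^\varepsilon$ with period $2/(N\varepsilon)$ bounded cannot degenerate to $\xi_\sigma$ without violating the linear bound of Lemma \ref{The existence of a trivial solution}, where the period is tied to $\pi^2$.

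\textbf{Main obstacle.} The hard step is this last regime. There the orbit is not close to the Maxwell heteroclinic loop, so \eqref{asymptotic form} is unavailable. I would first try to show directly that $N\mapsto E^{N\varepsilon}_0$ is increasing, by computing $\frac{d}{d\varepsilon}E^\varepsilon_0$ along the solution branch via \eqref{critical point}. Since $\partial_\sigma E=\partial_\lambda E=0$ on the branch, only the explicit $\varepsilon$-derivative $2\sqrt2\int f^{1/2}>0$ survives. This gives monotonicity in $\varepsilon$ provided the branch $(\sigma,\lambda)(\varepsilon)$ is continuous. Establishing that continuity (no fold) outside the small-$\varepsilon$ neighbourhood is the delicate point; I would fall back on the Modica--Mortola lower bound there.
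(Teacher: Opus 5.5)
Your part (i) is the paper's argument and is fine. For (ii), the paper does not compare energies at all; it shows via the second variation that no $2N$-solution with $N\ge2$ is a local minimizer. Since $N\ge2$, there is an interior $z$ with $v(z)=v(-\varepsilon^{-1})$ and $v'(z)=0$. The zero-mean test function $v'\chi_{[-\varepsilon^{-1},z]}$ has $J=[v'v'']_{-\varepsilon^{-1}}^{z}=0$ because $v'''=W''(v)v'$. Adding $\vartheta\eta_1$ with $\eta_1(-\varepsilon^{-1})=1$ creates a first-order term proportional to $\vartheta v''(-\varepsilon^{-1})\neq0$, so $J<0$ for a suitable sign of $\vartheta$. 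That argument needs no information about where the $(\sigma,\lambda)$ of the $2N$-solution lies. Your direct comparison does need it, and this is where there is a genuine gap.

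Your rescaling (a $2N$-solution is a single-bump solution for $\varepsilon'=N\varepsilon$, with the same energy) is correct. So is the envelope identity $\frac{d}{d\varepsilon'}E=2\sqrt2\int f^{1/2}_{\sigma,\lambda}>0$ along a $C^1$ branch. But for $N\varepsilon$ small you apply \eqref{asymptotic form} via ``the uniqueness part'' of Proposition~\ref{existence of k1 and k2}. That proposition only gives uniqueness of $\mathbf k$ inside $B_\delta$, i.e.\ among solutions whose $(\sigma,\lambda)$ is already exponentially close to $(\sigma_0,\lambda_0)$ in the scaling \eqref{transformation of h}. Nothing places your rescaled solution there, and in general it is not there. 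For metastable $\bar v$, say $\bar v\in(\beta,\beta_0)$, all sufficiently small $\varepsilon'$ also admit single-bump solutions shadowing the homoclinic orbit to $\beta_\sigma$ with $\beta_\sigma\approx\bar v$. For these, $(\sigma,\lambda)$ lies near the point of $\Gamma^{\varepsilon}_1$ with $\sigma=p(\bar v)\neq\sigma_0$ (critical nuclei). Their energy tends to $2W(\bar v)=0$, not to $2(\lambda_0-\sigma_0\bar v)$. So the lemma's conclusion survives for them, but not by your argument. Their existence also means the single-bump solutions do not form one graph over $\varepsilon'$, so the envelope argument cannot be closed either.

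The fallback for $N\varepsilon$ not small fails as stated. After rescaling, Lemma~\ref{The existence of a trivial solution} excludes nonconstant solutions only when $(N\varepsilon)^2\pi^2>\max_{[\alpha,\beta]}p'$. Just below that threshold, e.g.\ near $(N\varepsilon)^2\pi^2=p'(\bar v)$ for $\bar v\in(\alpha,\beta)$, small-amplitude $2N$-solutions bifurcate from $\bar v$. For these, the Modica--Mortola contribution per transition, $\sqrt2\,\varepsilon\int_{z_1}^{z_2}(W(s)+\sigma_0 s-\lambda_0)^{1/2}ds$, tends to $0$ with the amplitude. So there is no bound uniform in the amplitude.

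Your route can be repaired without any uniqueness and without splitting in $N\varepsilon$. Put $Q(s)=W(s)+\sigma_0 s-\lambda_0\ge0$, which vanishes only at $\alpha_0,\beta_0$. Every nonconstant solution has $v\in[z_1,z_2]\subset(\alpha_0,\beta_0)$ and excess $E^\varepsilon(v)-2(\lambda_0-\sigma_0\bar v)=\varepsilon\int(Q(v)+\tfrac12 v_y^2)\,dy$.

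If $z_1\le\alpha_0+\rho$ and $z_2\ge\beta_0-\rho$, Modica--Mortola over the $2N$ monotone traversals gives an excess of at least $2\sqrt2N\varepsilon\int_{\alpha_0+\rho}^{\beta_0-\rho}Q^{1/2}$. For $N\ge2$ and $\rho$ small, this exceeds $\varepsilon S+O(e^{-C/\varepsilon})$.

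Otherwise, say $z_1>\alpha_0+\rho$. Since $0<\beta_0-v\le\beta_0-\alpha_0$ has mean at least $\delta$, $v$ spends a fraction at least $\delta/(2(\beta_0-\alpha_0))$ of the period in $[\alpha_0+\rho,\beta_0-\delta/2]$, where $Q\ge c_{\rho,\delta}>0$. This gives an excess bounded below independently of $\varepsilon$ and $N$. The case $z_2<\beta_0-\rho$ is symmetric.
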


\begin{proof}
To show (i), consider any constant solution $v$ of \eqref{ODE under scaling transformation}. The constraint \eqref{ODE under scaling transformation}$_3$ implies $v \equiv \bar v$, and the corresponding energy obtained from \eqref{Energy} is
\begin{equation}
E^{\varepsilon}(\bar v) = 2W(\bar v).\label{eq:constant_energy}
\end{equation}
Using the asymptotic expression for $E_0^{\varepsilon}$ in \eqref{asymptotic form}, we find
\begin{equation}\label{error estimate}
E^{\varepsilon}(\bar v) - E_0^{\varepsilon} = 2\left[ \left(W(\bar v)+\sigma_0\bar v\right) - \left(W(\beta_0)+\sigma_0 \beta_0\right) \right] + O(\varepsilon),
\end{equation}
uniformly for $\bar v \in [\alpha_0+\delta, \beta_0-\delta]$. The function $W(v) - \sigma_0 v$ has strict global minima at $v = \alpha_0$ and $v = \beta_0$. As $\bar v$ is confined to a closed interval excluding these points, the bracket in \eqref{error estimate} is positive and uniformly bounded away from zero. Hence, for sufficiently small $\varepsilon$, the energy difference is positive, establishing (i).

To prove (ii), let $v$ be a $2N$-smooth-interface phase transition solutions with $N \geq 2$  transitions.  We  show that $v$  cannot be a local minimizer by constructing a variation $\eta\in H^1_{per,0}(-\varepsilon^{-1},\varepsilon^{-1})$ such that the second variation satisfies
\begin{equation}\label{Second variation}
 \varepsilon J(v,\eta)\xlongequal{\mathrm{def}}\frac{d^{2}}{d\xi^2}E^{\varepsilon}(v+\xi\eta)\Big|_{\xi=0}<0, 
\end{equation}To this end, we start from the energy functional \eqref{Energy}, from which the second variation takes the form
\begin{equation}\label{Second variation-1}
 J(v,\eta)=\int_{-\varepsilon^{-1}}^{\varepsilon^{-1}}\Big[\eta_y^2+W''(v)\eta^2\Big]dy.
\end{equation}
Since $v$ is $2N$-smooth-interface phase transition solutions with $N \geq 2$, it exhibits at least four oscillations (i.e., a bimodal or bisingle-valley structure). Thus, there exist points $z\in(-\frac{1}{\varepsilon},\frac1\varepsilon)$ such that
\begin{equation}\label{properties of v}
  v(-\varepsilon^{-1})=v(z),\quad v'(z)=0=v'(-\varepsilon^{-1}).
\end{equation}
We now construct a function $\eta$ as follows
\begin{equation}\label{eta0}
 \eta_0(y)=\left\{\begin{array}{llll}
 v'(y),& -\varepsilon^{-1}\leq y\leq z,\\
 0,& z\leq y\leq \varepsilon^{-1},\end{array}\right.
\qquad
 \eta_1(y)=\left\{\begin{array}{llll}
 1,& y=-\varepsilon^{-1},\\
  0,&z\leq y<\varepsilon^{-1}, \end{array}\right. 
\end{equation}
where $\eta_1\in H^1(-\varepsilon^{-1},\varepsilon^{-1})$ and 
satisfies $\displaystyle\int_{-\varepsilon^{-1}}^{\varepsilon^{-1}}\eta_1dy=0$. Then define
\begin{equation}\label{eta}
  \eta(y)=\eta_0(y)+\vartheta\eta_1(y),
\end{equation}
with $\vartheta$ to be determined. A direct computation gives
\begin{equation}\label{J}
\left.\begin{array}{llll}
\displaystyle J(v,\eta)&=\displaystyle \int_{-\varepsilon^{-1}}^{z}\Big(v''^2+W''(v)v'^2\Big)dy+2\vartheta\int_{-\varepsilon^{-1}}^{z}\Big(v''\eta'_1+W''(v)v'\eta_1\Big)dy+O(\vartheta^2),
\end{array}\right.\end{equation}
From \eqref{ODE under scaling transformation}, we have
\begin{equation}\label{the third derivative of v}
  v'''=W''(v)v',
\end{equation}
Substituting \eqref{the third derivative of v} into \eqref{J}, we obtain
\begin{equation}\label{J-1}
  J(v,\eta)=-4\vartheta v''(-\varepsilon^{-1})+O(\vartheta^2).
\end{equation}
Since $v$ is non-constant, so $v''(-\varepsilon^{-1})\neq0$. Therefore, by choosing 
 $\vartheta$ appropriately, we can ensure  $J(v,\eta) < 0$, which completes the proof.
\end{proof} 

\

It follows that, among all solutions of system \eqref{Approximate ns-lagrange}, the two-smooth-interface phase transition solution (as defined in \ref{two-smooth-interface-phase-transition-solution}) minimizes the energy, a result we summarize in the following proposition.

\begin{proposition}\label{unique}
Under the assumptions of proposition \ref{prop:two-interface-phase-transition}, \textbf{the two-smooth-interface phase transition solution} for $(\varepsilon, \bar v)$ globally minimizes the energy when $\bar v$ lies in any closed subset of the Maxwell region $\Omega_\text{Maxwell}=(\alpha_0,\beta_0)$ and $\varepsilon$ is sufficiently small.
\end{proposition}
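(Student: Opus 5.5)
The plan is to combine three ingredients already available: existence of a global minimizer of $E^{\varepsilon}$ over $H^1_{\mathrm{per},\bar v}$, the classification of critical points into three types, and the energy comparison of the preceding lemma.

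\textbf{Step 1: a global minimizer exists and solves the equation.} Since $W$ is continuous on $(b,\infty)$ and $W(v)\to+\infty$ as $v\to b^+$, the direct method applies to $E^{\varepsilon}$ on the affine space $H^1_{\mathrm{per},\bar v}$. We use the $H^1$ bound from the $v_y^2$ term, compact embedding into $C^0$, and weak lower semicontinuity, as in the proof of Theorem~\ref{existence of trivial solution} via Lemma~\ref{equivalent critical points}. This gives a minimizer $v_*$ with $v_*>b$. It is a critical point, so it solves \eqref{ODE under scaling transformation} with some multiplier $\sigma$, and bootstrapping makes it smooth.

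\textbf{Step 2: classify $v_*$.} Every solution of \eqref{ODE under scaling transformation} satisfies the first integral \eqref{first integral}. It is therefore either constant, and hence $\equiv\bar v$ by the mass constraint, or a periodic orbit of the phase plane. Such an orbit oscillates between $z_1^{\sigma,\lambda}$ and $z_2^{\sigma,\lambda}$ and has $2N$ changes of monotonicity per period $2/\varepsilon$ for some $N\ge1$. So $v_*$ is constant, a two-smooth-interface solution, or a $2N$-interface solution with $N\ge2$.

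\textbf{Step 3: rule out the other types.} For $\bar v$ in a closed subset of $(\alpha_0,\beta_0)$ and $\varepsilon$ small, part (i) of the preceding lemma gives $E^{\varepsilon}(\bar v)>E^{\varepsilon}_0$. So the constant is not a minimizer. Part (ii) gives a variation with negative second variation \eqref{Second variation} for every $2N$-interface solution with $N\ge2$. Such a solution cannot be a local minimizer, let alone a global one. Hence $v_*$ is a two-smooth-interface solution.

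\textbf{Step 4: identify $v_*$ with the solution of Proposition~\ref{prop:two-interface-phase-transition}.} Two-interface solutions correspond one-to-one with pairs $(\sigma,\lambda)\in D^{\varepsilon}$ solving \eqref{restriction for admissible pair}. Proposition~\ref{existence of k1 and k2} gives uniqueness of $\mathbf{k}$ only within $B_\delta$, so the main obstacle is showing that the pair belonging to $v_*$ actually lies in that neighbourhood. To do this, I would use $E^{\varepsilon}(v_*)\le E^{\varepsilon}_0$ together with the representation \eqref{Another expression of energy}. This should force $2(\lambda-\sigma\bar v)\to2(\lambda_0-\sigma_0\bar v)$ and, through the period constraint $I_0=1/\varepsilon$, force $(\sigma,\lambda)\to(\sigma_0,\lambda_0)$. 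Indeed, $I_0$ stays bounded unless the orbit approaches the heteroclinic loop, which happens only at $(\sigma_0,\lambda_0)$, or a degenerate center on $\Gamma_3^{\varepsilon}$, which is excluded by the energy bound. Then the logarithmic asymptotics of Lemma~\ref{the properties of I} translate this convergence into $\mathbf{k}$ being bounded, hence in $B_\delta$ for small $\varepsilon$.

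\textbf{Step 5: conclusion.} Uniqueness in Proposition~\ref{existence of k1 and k2} shows that $v_*$ is the two-smooth-interface solution, up to translation, which covers the peak and valley labels. Therefore that solution attains the global minimum of the energy.
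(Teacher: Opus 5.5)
Your Steps 1--3 and 5 reproduce the paper's argument. The paper asserts that a smooth global minimizer exists by classical variational theory. It sorts solutions into constant, two-interface and $2N$-interface types, and discards the constant by part (i) and the $2N$-interface solutions by the second-variation part (ii) of the preceding lemma. It then takes uniqueness of the two-interface solution for granted from Proposition~\ref{existence of k1 and k2} and Carr--Gurtin--Slemrod. You also skip a routine point: $v_*>b$ does not follow from finite energy alone, since $W$ blows up only logarithmically at $b$. You are right that Proposition~\ref{existence of k1 and k2} gives uniqueness only for $\mathbf{k}\in B_\delta$, so Step~4 carries the real content. As written, it has a genuine gap.

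\textbf{$I_0$ blows up on the whole homoclinic boundary.} Your claim that $I_0$ stays bounded except near $(\sigma_0,\lambda_0)$ or a degenerate center is false. On $\Gamma^{\varepsilon}_1\setminus\{(\sigma_0,\lambda_0)\}$ one has $z_2^{\sigma,\lambda}=\beta_\sigma$ with $f_{\sigma,\lambda}(\beta_\sigma)=f_{\sigma,\lambda}'(\beta_\sigma)=0<W''(\beta_\sigma)$. Hence $f_{\sigma,\lambda}^{-1/2}\sim|s-\beta_\sigma|^{-1}$ and $I_0=+\infty$: these are homoclinic orbits to the saddle $(\beta_\sigma,0)$. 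The same holds on $\Gamma^{\varepsilon}_2$ with $\alpha_\sigma$; this is exactly the $\ln\Psi_i$ singularity in Lemma~\ref{the properties of I}. So $I_0=1/\varepsilon$ only pushes $(\sigma,\lambda)$ toward $\Gamma^{\varepsilon}_1\cup\Gamma^{\varepsilon}_2$. For $\bar v\in(\alpha_0,\alpha)\cup(\beta,\beta_0)$ there genuinely are single-peak or single-valley solutions near those curves: one droplet or bubble on a background $\approx\bar v$, with $\sigma\approx p(\bar v)$. These must be excluded, and for the same reason the global uniqueness asserted after Proposition~\ref{existence of k1 and k2} cannot be used as a black box.

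\textbf{The energy bound is one-sided.} By \eqref{Another expression of energy}, $E^{\varepsilon}\ge 2(\lambda-\sigma\bar v)$. So $E^{\varepsilon}(v_*)\le E^{\varepsilon}_0$ yields only $\lambda-\sigma\bar v\le\lambda_0-\sigma_0\bar v+O(\varepsilon)$, not convergence. Near the corner $(\overline\sigma,\cdot)$ this bound alone does not even rule out the degenerate center when $\bar v$ is close to $\beta_0$.

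\textbf{The missing ingredient is the mass constraint $I_1=\bar v\,I_0$ from \eqref{restriction for admissible pair}.}
\begin{enumerate}
\item Suppose $(\sigma,\lambda)$ accumulated at a point of $\Gamma^{\varepsilon}_1$ other than $(\sigma_0,\lambda_0)$. The normalized measure $f_{\sigma,\lambda}^{-1/2}\,ds/I_0$ would concentrate at $\beta_\sigma$, forcing $\beta_\sigma=\bar v$.
\item This gives $\lambda-\sigma\bar v\to W(\bar v)$, hence $\liminf E^{\varepsilon}(v_*)\ge 2W(\bar v)$.
\item That contradicts $E^{\varepsilon}(v_*)\le E^{\varepsilon}_0\to 2(\lambda_0-\sigma_0\bar v)$, because $W(\bar v)+\sigma_0\bar v>\lambda_0$. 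This is the same strict inequality used in part (i).
\item The same argument handles $\Gamma^{\varepsilon}_2$ and the degenerate corners, where the orbit collapses to $\alpha$ or $\beta$.
\end{enumerate}

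\textbf{After localization, your last step goes through.} Once $(\sigma,\lambda)\to(\sigma_0,\lambda_0)$ is established, the nearly-regular coefficients differ from their limits by $O(|h|^{2(1-s)})$, which absorbs the $\ln h_i$ factors. The matrix with rows $(B_1,B_2)$ and $(\alpha_0B_1,\beta_0B_2)$ is invertible, so $\mathbf{k}=O(1)$. The fixed-point equation $\mathbf{k}=\mathbf{F}$ with $\nabla\mathbf{F}=0$ on $\mathcal{E}_0$ then places $\mathbf{k}$ in $B_\delta$ for small $\varepsilon$.
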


The preceding lemmas and the fact that our solution is a two-smooth-interface phase transition allow the results of Carr-Gurtin-Slemrod \cite{CGS1984}, obtained for non‑constant, monotone increasing $v'(y)\ge 0$, to carry over directly. Therefore, the quantities $|z_1^{\sigma,\lambda}-\alpha_0|$ and $|z_2^{\sigma,\lambda}-\beta_0|$ enjoy the asymptotic properties stated in Proposition \ref{prop:two-interface-phase-transition}. We outline the argument below for completeness and brevity. We begin by examining the critical points of $W(v) - \sigma v$, whose behavior tend toward the two endpoints of the Maxwell region in the limit $\varepsilon \to 0$. Lemmas \ref{lem:7}--\ref{nr-property} are quoted directly from \cite{CGS1984}; here we state them without proof.
\section*{Funding} 
Yazhou Chen acknowledges support from National Natural Sciences Foundation of China (NSFC) (No. 12471207),(No. 12371434).\\
Qiaolin He acknowledges support from  National Natural Science Foundation of China (Grant No. 12371434) and the National Key R \& D Program of China Under Grant (No.2022YFE03040002)\\
Dongjuan Niu acknowledges support from the Natural Science Foundation of Beijing Municipality, China (Grant No. 1252004) and Tianyuan Fund for Mathematics of the National Natural Science Foundation of China (Grant No. 12526429).\\
Yi Peng acknowledges support from National Natural Sciences Foundation of China (NSFC) (No. 12301266).\\
Xiaoding Shi acknowledges support from National Natural Sciences Foundation of China (NSFC) (No. 12171024), Beijing Natural Science Foundation  (No. 1252004).\\

\section*{Ethical approval}
Not applicable

\section*{Data Availability Statement} 
The data that support the findings of this study are available from the corresponding author
upon reasonable request.

\section*{Conflict of Interest}
 The authors declare that they have no conflict of interest.

\section*{Clinical Trial Number in the manuscript}
Clinical trial number: not applicable.

\end{document}